\newtheorem{theorem}{Theorem}
\newtheorem{proposition}[theorem]{Proposition}
\newtheorem{lemma}[theorem]{Lemma}
\newtheorem{definition}[theorem]{Definition}
\newtheorem{remark}[theorem]{Remark}
\newtheorem{question}{Question}
\numberwithin{theorem}{section}
\numberwithin{equation}{section}
\numberwithin{figure}{section}
\newcommand{\ve}{\varepsilon}
\newcommand{\ind}{\mathbbm{1}}
\newcommand{\ol}{\overline}
\newcommand{\ul}{\underline}
\newcommand{\dol}[1]{\ol{\ol{#1}}}
\newcommand{\dul}[1]{\ul{\ul{#1}}}
\newcommand{\calP}{\mathcal{P}}
\newcommand{\calD}{\mathcal{D}}
\newcommand{\ZZ}{\mathbb{Z}}
\newcommand{\RR}{\mathbb{R}}
\newcommand{\CC}{\mathbb{C}}
\newcommand{\PP}{\mathbb{P}}
\newcommand{\PPP}{\overline{\mathbb{P}}}
\newcommand{\EE}{\mathbb{E}}
\newcommand{\TT}{\mathbb{T}} % triangular lattice
\newcommand{\Ball}{B} % ball
\newcommand{\Ann}{A} % annulus
\newcommand{\din}{\partial^{\textrm{in}}} % inner boundary
\newcommand{\dout}{\partial^{\textrm{out}}} % outer boundary
\newcommand{\cluster}{\mathcal{C}}
\newcommand{\Cinf}{\cluster_\infty} % infinite cluster
\newcommand{\lclus}{\cluster^{\textrm{max}}} % largest cluster
\newcommand{\holew}{\overline{\mathcal{H}}} % weak hole
\newcommand{\holes}{\mathring{\mathcal{H}}} % strong hole
\newcommand{\Ch}{\mathcal{C}_H} % horizontal crossing event
\newcommand{\Cv}{\mathcal{C}_V} % vertical crossing event
\newcommand{\circuit}{\mathcal{C}} % a random circuit, usually outermost black
\newcommand{\circuitevent}{\mathcal{O}} % event that an annulus has a black circuit
\newcommand{\circuitarm}{\mathcal{OA}} % event that an annulus has an occupied circuit and an occupied arm
\newcommand{\colorseq}{\mathfrak{S}} % set of colour sequences
\newcommand{\arm}{\mathcal{A}} % arm event
\newcommand{\darm}{\overline{\mathcal{A}}} % arm event with one defect
\newcommand{\marm}{\widehat{\mathcal{A}}} % modified arm event
\newcommand{\net}{\mathcal{N}} % event for existence of net
\newcommand{\Dint}[2]{{#1}^{\textrm{int}(#2)}}
\newcommand{\Dout}[2]{{#1}^{\textrm{out}(#2)}}
\newcommand{\next}{\psi}
\newcommand{\deltaKMS}{\mathfrak{d}}
\newcommand{\lra}{\leftrightarrow}
\newcommand{\llra}{\longleftrightarrow}
\begin{document}

\title{A 2D forest fire process beyond the critical time}
\author{Jacob van den Berg\footnote{CWI and VU University Amsterdam; E-mail: \texttt{J.van.den.Berg@cwi.nl}.}, Pierre Nolin\footnote{City University of Hong Kong; E-mail: \texttt{bpmnolin@cityu.edu.hk}. Partially supported by a GRF grant from the Research Grants Council of the Hong Kong SAR (project CityU11307320).}}

\date{}

\maketitle

\begin{abstract}
We study forest fire processes in two dimensions. On a given planar lattice, vertices independently switch from vacant to occupied at rate $1$ (initially they are all vacant), and any connected component ``is burnt'' (its vertices become instantaneously vacant) as soon as its cardinality crosses a (typically large) threshold $N$, the parameter of the model. This process was considered by Brouwer and the first author in \cite{BB2006}.

Our analysis provides a detailed description, as $N \to \infty$, of the process near and beyond the critical time $t_c$ (at which an infinite cluster would arise in the absence of fires). In particular we prove a somewhat counterintuitive result: there exists $\deltaKMS > 0$ such that with high probability, the origin does not burn before time $t_c + \deltaKMS$. This provides a negative answer to Open Problem~4.1 of \cite{BB2006}. Informally speaking, the result can be explained in terms of the emergence of fire lanes, whose total density is negligible (as $N \to \infty$), but which nevertheless are sufficiently robust with respect to recoveries. We expect that such a behavior also holds for the classical Drossel-Schwabl model.

A large part of this paper is devoted to understanding the role played by recoveries during the time interval $[t_c, t_c + \deltaKMS]$. These recoveries do have a ``microscopic'' effect everywhere on the lattice, but it turns out that their combined influence on macroscopic scales (and in fact on relevant ``mesoscopic'' scales) vanishes as $N \to \infty$.

In order to prove this, we use key ideas from a paper by Kiss, Manolescu and Sidoravicius \cite{KMS2015}, introducing a suitable induction argument to extend and strengthen their results. We then use it to prove that a deconcentration result in our earlier joint work with Kiss \cite{BKN2018} on volume-frozen percolation (a model \emph{without} recoveries) also holds for the forest fire process. As we explain, significant additional difficulties arise here, since recoveries destroy the nice spatial Markov property of frozen percolation.

\bigskip

\textit{Key words and phrases: near-critical percolation, forest fires, self-organized criticality.}
\end{abstract}

\tableofcontents

\section{Introduction} \label{sec:intro}

\subsection{Background and motivation} \label{sec:backgr}

Consider the following process on a two-dimensional lattice $G = (V,E)$, such as the square lattice $\ZZ^2$ or the triangular lattice $\TT$. Initially, at time $t=0$, all vertices are vacant, and they become occupied (by a ``tree'')  at rate $1$ (the birth rate), forming occupied connected components (clusters) on $G$. Additionally, each occupied vertex is ``hit by lightning'' at rate $\zeta$, the parameter of the model: when this occurs, the entire occupied cluster of the vertex ``burns instantaneously'' i.e. all vertices in that cluster become vacant at once. All these burnt vertices then become occupied again (``recover'') at rate $1$, and so on.

This model is a (continuous-time) version of the Drossel-Schwabl forest fire model \cite{DrSc1992}, which has received much attention in the physics literature, as well as in papers on ecology and related fields (this classical process is often viewed as a paradigmatic situation where \emph{self-organized criticality} can be observed). In this paper we simply call it the Drossel-Schwabl model. The first mathematically rigorous paper where this process was studied in dimension $>1$ is, as far as we know, the paper \cite{BB2006} by one of us and Brouwer.

Note that if all ignitions are ignored, i.e. in the corresponding ``birth process'', then at each time $t$ we have a configuration where the sites of the lattice are, independently of each other, occupied with probability $p = p(t) := 1 - e^{-t}$, and vacant with probability $1-p$. The study of the size of occupied connected components (and other connectivity properties) in such random configurations is the subject of (Bernoulli) \emph{percolation theory}, a field which has witnessed impressive developments in the last decades (see Section~\ref{sec:percolation} for a brief introduction to percolation theory, and a list of results that will be used in this paper).

One of the first results in percolation theory was the existence of a \emph{critical value} $p_c$, strictly between $0$ and $1$, such that for $p > p_c$ there is (almost surely) an infinite occupied cluster, while for $p < p_c$ there are only finite occupied clusters. Related to this early percolation result, the above mentioned paper \cite{BB2006} raised a fundamental open problem on forest fires, where $t_c$ is the time at which an infinite cluster ``starts to form'' in the birth process, i.e. defined by the relation $1 - e^{- t_c} = p_c$. This open problem is essentially the following: is it true that, for all $t > t_c$, the probability that a given vertex, say $0$, burns before time $t$ does \emph{not} tend to $0$ as $\zeta \searrow 0$? 

The following intuitive argument ``by contradiction'' (taken roughly from \cite{BB2006}) suggests an affirmative answer to the above question:

\begin{quote}
``Suppose there is a $t > t_c$ for which the above mentioned probability does go to $0$. Take a $t' \in (t_c, t)$. Trivially the mentioned probability also tends to $0$ for $t'$. So, roughly speaking, as $\zeta \searrow 0$, the system at time $t'$ looks like ordinary Bernoulli percolation with parameter $1 - e^{-t'}$. Since this is larger than $p_c$, there is a positive probability that $0$ belongs to an infinite occupied cluster at time $t'$. However, such a cluster would burn immediately, hence before time $t$: contradiction.''
\end{quote}

As said in \cite{BB2006}, this type of reasoning is very shaky, but its conclusion turns out to be correct for the directed binary tree (see Lemma~4.5 in that paper). The same paper also stated a percolation-like conjecture (Conjecture 2.1 there). This conjecture says, roughly speaking, that, for Bernoulli percolation at $p = p_c$, if certain macroscopic occupied clusters in an $n$ by $n$ box are destroyed (made vacant), after which \emph{every} vacant vertex in the box gets, independently, an extra chance $\delta$ to become occupied, then this $\delta$ needs to be bounded away from $0$ as $n \rightarrow \infty$ to assure large-scale connectivity in the resulting configuration.

The paper also introduced a variant of the Drossel-Schwabl model where the ignition mechanism is not Poissonian, but where instead an occupied cluster is burnt immediately when its size (volume) is at least $N$, which is now the parameter of the model. From now on, this process is called the \emph{$N$-forest fire}, and we denote the corresponding probability measure by $\PP_N$. One may expect that for very large $N$, it behaves in many respects the same as the Drossel-Schwabl model with parameter $\zeta = \frac{1}{N}$, and the paper \cite{BB2006} stated similar open problems and results for this process as for the Drossel-Schwabl model. In particular, it contained this question:

\begin{quote}
\textbf{Open Problem 4.1 in \cite{BB2006}.} \emph{Is, for all $t > t_c$,}
\begin{equation}\label{eq-OP4.1-in-BB}
\limsup_{N \rightarrow\infty} \PP_N \big( 0 \textit{ burns before time } t \big) > 0 \text{?}
\end{equation}
\end{quote}

In addition, the following conditional result was proved, where we denote $\Ball_m := [-m,m]^2$.

\begin{quote}
\textbf{Theorem 4.2 in \cite{BB2006}.} \emph{If the above-mentioned percolation-like Conjecture~2.1 in \cite{BB2006} is true, then there exists a $t > t_c$ such that for all $m \geq 1$,}
\begin{equation}\label{eq:BB-N}
\liminf_{N \rightarrow \infty} \PP_N \big( \exists v \in \Ball_m \: : \: v \textit{ burns before time } t \big) \leq \frac{1}{2}.
\end{equation}
\end{quote}

Much of our later work in this field was motivated by the open problem that we just stated (and its analog for the Drossel-Schwabl model, Open Problem~1.1 in \cite{BB2006}). Our hope was that, by first studying similar problems for models with a more ``sober'' dynamics, in particular volume-frozen percolation, some key features would be discovered, and that the combination of these features would lead to a solution to Open Problem~4.1 in \cite{BB2006}. Volume-frozen percolation can roughly, but not exactly (see the discussion in Section~\ref{sec:main-geom}), be considered, interpreting freezing as burning, as an $N$-forest fire process without recoveries.

One of these features was a deconcentration phenomenon, proved for volume-frozen percolation in our joint work \cite{BKN2018} with Kiss. Let us briefly and very informally describe this property. At each time we can consider the connected component of non-frozen vertices containing $0$, that we call the ``hole'' or ``island'' of $0$ (note that at time $0$, this is the set of all vertices). At some point it becomes finite, but with a volume \emph{much} bigger than $N$, and then, step by step (where each step corresponds to a freezing event in the current island), it shrinks further. For $0$ itself to freeze, it ``must'' at some step be in an island with a size of order, and larger than, $N$: after the next, and last, step, $0$ will then (with high probability) either be and remain in an island of size $> 0$ but smaller than $N$, or freeze itself (which typically has a very small probability if the former island has size of order $N$ but much bigger than $N$).

The deconcentration phenomenon says that the island sizes are very sensitive for disturbances earlier in the sequence, so that the size of the one-but-last island mentioned above is highly unpredictable: the probability that its size is larger, but not much larger, than $N$ (and hence the probability that $0$ eventually freezes), is very small (tends to $0$ as $N \rightarrow \infty$).

In the meantime, Kiss, Manolescu and Sidoravicius had obtained in \cite{KMS2015} a clever proof of the percolation-like conjecture of \cite{BB2006} (or, rather, an equally suitable version of that conjecture). Hence the ``conditional'' Theorem 4.2 of \cite{BB2006}, stated above, became an ``unconditional'' one, i.e. the ``If'' part in that theorem could be removed. However, Open Problem~4.1 of \cite{BB2006} (also mentioned above) remained open.

The percolation-like conjecture in \cite{BB2006}, and the version proved in \cite{KMS2015}, suggest that, very roughly speaking, certain regions that have become disconnected from each other by ``barriers'' of burnt vertices, caused by a fire near (or beyond) time $t_c$, remain with high probability disconnected during a time $\deltaKMS$, for some $\deltaKMS > 0$ which does not depend on $N$: the recovery of barriers is slow. This gives hope that the islands of $0$, that we alluded to earlier, remain sufficiently intact if recoveries are allowed, so that the deconcentration result for $N$-volume frozen percolation should also hold for the $N$-forest fire model. 

In the present paper we show that this hope is indeed justified: the sequence of islands for the $N$-forest fire, and that for volume-frozen percolation (or, rather, the last ``many'' steps of these two sequences), can be compared sufficiently well, even though the spatial Markov property which holds for the sequence of islands in the frozen percolation model is no longer valid in the $N$-forest fire. This is achieved by adapting the reasonings in Sections~6 and 7 of \cite{BKN2018}, and by developing and applying suitable versions of the results in \cite{KMS2015}. We thus prove that the $N$-forest fire process satisfies a similar deconcentration phenomenon as $N$-frozen percolation, which leads to our main result, Theorem~\ref{thm:main_result} below, giving a \emph{negative} answer to Open Problem~4.1 of \cite{BB2006}.

\subsection{Main results}

Our analysis provides a detailed understanding of the asymptotic behavior of the $N$-forest fire process on a time interval $[0, t_c + \deltaKMS]$, for some positive number $\deltaKMS$ which does not depend on $N$. It can be summarized in particular by the following result. For technical reasons explained in Remark~\ref{rem:triang_lattice} below, we need to focus on the case where $G$ is the triangular lattice (even though we expect the same behavior to occur on other natural two-dimensional lattices such as $\ZZ^2$).

\begin{theorem} \label{thm:main_result}
Consider the $N$-forest fire process on the triangular lattice $\TT$, and the associated critical time $t_c = t_c(\TT)$. There exists $\deltaKMS > 0$ (universal) such that
\begin{equation} \label{eq:proba_burning}
\PP_N \big( 0 \text{ burns before time } t_c + \deltaKMS \big) \stackrel[N \to \infty]{}{\longrightarrow} 0.
\end{equation}
\end{theorem}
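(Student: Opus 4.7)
The plan is to transfer the deconcentration phenomenon established for volume-frozen percolation in \cite{BKN2018} to the forest-fire setting, using a strengthened, quantitative version of the robustness of fire lanes from \cite{KMS2015} as a substitute for the spatial Markov property that holds for frozen percolation but fails here. Informally, the target is to show that in the forest fire, just as in frozen percolation, the shrinking of the non-burnt component of $0$ proceeds through a cascade of "islands" whose penultimate size is too spread out to land in the narrow window where $0$ itself would be the next to burn.

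To begin, I would recall the structure of the argument in \cite{BKN2018}: one tracks the decreasing sequence of connected components of non-frozen vertices containing $0$, and these shrink through a cascade of freezing events at a decreasing sequence of mesoscopic scales. The deconcentration lemma says that the size of the penultimate island --- the one which must have volume of order $N$ for $0$ itself to freeze at the next step --- is so sensitive to earlier randomness that the probability it lands in the narrow window near $N$ tends to $0$. If this cascade could be reproduced faithfully inside the forest-fire process, \eqref{eq:proba_burning} would follow for any $\deltaKMS > 0$ small enough that the cascade completes by time $t_c + \deltaKMS$.

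The heart of the plan is therefore a coupling. For this, I would first develop a scale-uniform, quantitative strengthening of the main estimate of \cite{KMS2015}: after a macroscopic near-critical cluster burns, the resulting barrier of vacant sites keeps disconnecting the two sides during a time of order $\deltaKMS$, with high probability, even when each vacant vertex recovers at rate $1$. Crucially, I would prove this inductively so that it applies simultaneously at every mesoscopic scale appearing in the cascade: assuming it at all scales below some level, one uses it to bound the influence of recoveries around a slightly larger burnt cluster and upgrade the statement to the next scale. Equipped with this, one argues that inside each current island, the forest-fire configuration restricted to the relevant connection and arm events is close in distribution to its near-critical percolation counterpart; consequently, the sequence of island sizes in the forest fire can be coupled to the frozen-percolation cascade up to an error negligible on macroscopic scales, and the deconcentration estimate of Sections~6 and~7 of \cite{BKN2018} can be applied verbatim.

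The main obstacle is precisely the failure of spatial Markov property, as emphasized in the introduction. In frozen percolation, once an outer cluster freezes, its complement evolves independently of the exterior and the cascade is a clean recursion on nested near-critical configurations. In the forest fire, burnt vertices throughout the lattice keep recovering, and these recoveries inside the fire lane of an outer cluster could in principle reconnect the island of $0$ with the outside, destroying the recursion. Ruling this out uniformly in $N$ and simultaneously across all scales of the cascade is exactly what the inductive strengthening of \cite{KMS2015} is designed for, and is where I expect the bulk of the technical work to lie. Once this coupling is in place, Theorem~\ref{thm:main_result} follows by replaying the deconcentration argument of \cite{BKN2018} on the forest-fire island cascade.
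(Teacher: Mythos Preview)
Your overall strategy matches the paper's: strengthen the barrier-robustness of \cite{KMS2015} so it holds uniformly across mesoscopic scales, use it to show the forest-fire island cascade tracks the frozen-percolation one, and invoke the deconcentration of \cite{BKN2018}. But the proposal, as written, underestimates two genuine difficulties and mislocates the induction.

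First, the induction in the paper is \emph{not} across cascade scales (``assume robustness at island level $k$, deduce it at level $k+1$''). It is an induction on the radii of a single annulus, applied to a modified six-arm event $\marm_6$ (Definition~\ref{def:6_arms_modified}) that encodes the passage-site structure of \cite{KMS2015}. The key point is that a passage site inside the annulus splits it into three smaller sub-annuli where $\marm_6$ again holds, and summing over the location of that site gives the inductive step (Theorem~\ref{thm:modif_6_arms}). This single estimate is scale-free and is what makes the robustness of fire lanes (Theorem~\ref{thm:M_thick}) work at every step of the cascade simultaneously. Your description suggests a different, cascade-level induction that would not close: at each cascade step the hole is a random stopping set, and you would need the robustness \emph{before} you can even define the next hole.

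Second, the claim that Sections~6--7 of \cite{BKN2018} ``can be applied verbatim'' is too optimistic, and the paper says so explicitly. The $\tilde{\Lambda}_i$ are not stopping sets, and recoveries can push them outside the idealized $\Delta_i$; the paper handles this with new lemmas (Lemmas~\ref{lem:approx_BCKS_unif} and \ref{lem:iteration_thick}) controlling the thickened hole globally rather than iteratively. More importantly, your proposal treats the full-lattice theorem as if it follows directly once the cascade is controlled. It does not: Proposition~\ref{prop:finite_domain} only covers finite domains whose scale lies in a window $(m_{k+2}(N), m_{k+5}(N))$, and passing from $\TT$ to such a domain requires the separate construction of Section~\ref{sec:dec_proof} (producing a first suitable hole via Theorem~\ref{thm:sdp_circ}, a two-case analysis on when the initial circuit burns, and a net argument). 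This reduction is a substantial part of the proof and is absent from your plan.
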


Hence,
\begin{equation}
\PP_N \big( 0 \text{ is occupied at time } t \big) \stackrel[N \to \infty]{}{\longrightarrow} p(t) = 1 - e^{-t}
\end{equation}
uniformly over $t \in [0, t_c + \deltaKMS]$. More generally, it implies immediately that for every $K \geq 0$,
\begin{equation} \label{eq:local_limit}
\PP_N \big( \exists v \in \Ball_K \: : \: v \text{ burns before time } t_c + \deltaKMS \big) \stackrel[N \to \infty]{}{\longrightarrow} 0.
\end{equation}
The local limit of the $N$-forest fire process is thus simply Bernoulli site percolation with parameter $p(t)$.

Theorem~\ref{thm:main_result} provides a negative answer to Open Problem~4.1 of \cite{BB2006} and, as pointed out in Section~\ref{sec:backgr}, may look counterintuitive since $p(t) > p_c$ for $t \in (t_c, t_c + \deltaKMS]$ (in other words, the density of occupied sites is ``supercritical''). The result \eqref{eq:local_limit} substantially improves Theorem~7 of \cite{KMS2015}, which states that the liminf of the l.h.s. is $\leq \frac{1}{2}$.

%For Bernoulli percolation with the same parameter, an infinite occupied connected component should exist almost surely, but in the forest fire process, such infinite components cannot arise due to the burning mechanism (in fact, clusters never reach a volume $> d (N-1) + 1$ before disappearing, where $d$ is the maximum degree of the graph $G$).

\begin{remark}
We believe that our proofs could provide an explicit upper bound in \eqref{eq:proba_burning}. Such a quantitative statement would allow us to reach the same conclusion as in \eqref{eq:local_limit}, while letting $K \to \infty$ as a (very slow) function of $N$.
%In this way, we would however derive bounds (and so a function $K(N)$) which are very far from what we could hope for, heuristically.
\end{remark}

We can also consider the $N$-forest fire process in a finite subgraph $G$ of $\TT$, for which we use the notation $\PP_N^{(G)}$. A result similar to Theorem~\ref{thm:main_result} holds for sequences of such domains, provided they grow fast enough in $N$.

\begin{theorem} \label{thm:main_result_finite}
For some $\deltaKMS > 0$, the following holds. For all $\ve > 0$, there exists $\eta = \eta(\ve) > 0$ such that: if $m(N) \geq N^{\frac{48}{91} - \eta}$ for all sufficiently large $N$, then
$$\limsup_{N \to \infty} \PP_N^{(\Ball_{m(N)})} \big( 0 \text{ burns before time } t_c + \deltaKMS \big) \leq \ve.$$
\end{theorem}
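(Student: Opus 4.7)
The idea is to deduce Theorem~\ref{thm:main_result_finite} from Theorem~\ref{thm:main_result} by showing that, with probability at least $1-\ve$, the history of $0$ up to time $t_c + \deltaKMS$ is identical in the finite-volume process $\PP_N^{(\Ball_{m(N)})}$ and the infinite-volume process $\PP_N$. I would couple the two processes via the same Poisson birth clocks on vertices of $\Ball_{m(N)}$, so that any discrepancy originates from a cluster whose size reaches $N$ in one process but not the other, due to truncation at $\partial \Ball_{m(N)}$. Defining $\calE_N$ to be the event that, in this coupling, no such discrepancy ever reaches the connected component of $0$ within $\Ball_{m(N)/2}$ before time $t_c + \deltaKMS$, one has
\begin{equation*}
\PP_N^{(\Ball_{m(N)})}(0 \text{ burns before } t_c + \deltaKMS) \leq \PP_N(0 \text{ burns before } t_c + \deltaKMS) + \PP(\calE_N^c).
\end{equation*}
The first term is $o(1)$ by Theorem~\ref{thm:main_result}, so the problem reduces to $\PP(\calE_N^c) \leq \ve$ for large $N$.

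\textbf{Quantitative localization.} To bound $\PP(\calE_N^c)$ I would extract a quantitative form of the deconcentration phenomenon underlying the proof of Theorem~\ref{thm:main_result}. That argument identifies a decreasing sequence of ``islands'' containing $0$ and shows that, past a bounded (in $\ve$) number of steps, the island volume drops strictly below $N$. Reading the induction quantitatively --- the KMS-type estimates of \cite{KMS2015} and their extensions in the paper are polynomial at each scale --- yields a polynomial gain: with probability at least $1-\ve/2$, every cluster ever containing $0$ at any time in $[0, t_c+\deltaKMS]$ in the infinite process has volume at most $N^{1-\eta_1(\ve)}$. Converting this to a diameter bound via the standard near-critical estimate ``volume $V$ implies diameter $\lesssim V^{48/91}$ with high probability'' gives $\diam(\calH_0) \leq N^{48/91-\eta_2(\ve)}$ with probability $\geq 1-\ve$, where $\calH_0$ denotes the union of vertices ever in the cluster of $0$. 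The same type of argument, applied globally to rule out a boundary-touching cluster of diameter $\geq m(N)/2$ in $[0, t_c+\deltaKMS]$, controls the remaining possible sources of discrepancy and yields $\PP(\calE_N^c) \leq \ve$ once $\eta \leq \eta_2(\ve)$ in the hypothesis.

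\textbf{Main obstacle.} The core difficulty is extracting the polynomial rate $\eta_1(\ve)$. The exponent $\tfrac{48}{91}$ is sharp --- it is precisely the volume-to-radius exponent for critical two-dimensional percolation on $\TT$ --- so the hypothesis on $m(N)$ tolerates no polynomial slack, and a merely qualitative ``small diameter with high probability'' statement is insufficient. Fortunately, all the quantitative ingredients in the proof of Theorem~\ref{thm:main_result} (the KMS robustness theorem, the RSW-type crossing bounds, and the inductive control on successive islands) are polynomial at every scale, so the main remaining task is a careful bookkeeping through the bounded (in $\ve$) number of induction steps, ensuring that these polynomial gains compound rather than cancel.
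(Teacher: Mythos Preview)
Your approach via coupling the finite and infinite processes is genuinely different from the paper's, and it has a real gap. The paper does \emph{not} deduce Theorem~\ref{thm:main_result_finite} from Theorem~\ref{thm:main_result}. Instead, it observes that the proof of Theorem~\ref{thm:main_result} is inherently local: the entire construction (the circuit $\circuit$, the two cases for when it burns, the creation of a suitable approximable island, and the subsequent iteration via Proposition~\ref{prop:finite_domain}) takes place inside $\Ball_{r_0(N)}$ with $r_0(N) = m_{k_0+5}(N)$, where $k_0$ depends only on $\ve$. Since the exceptional-scale exponents satisfy $\delta_k \nearrow \tfrac{48}{91}$, one has $m_{k_0+5}(N) = N^{\delta_{k_0+5}+o(1)}$ with $\delta_{k_0+5} < \tfrac{48}{91}$, so choosing $\eta(\ve) = \tfrac{48}{91} - \delta_{k_0+5}$ (say, halved) guarantees $\Ball_{m(N)} \supseteq \Ball_{r_0(N)}$ for large $N$, and the same proof runs verbatim in $\Ball_{m(N)}$. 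No coupling is needed.

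Your coupling route runs into a serious difficulty that you do not address: forest fires are not monotone, so a discrepancy near $\partial \Ball_{m(N)}$ (a cluster that burns in one process but not the other) can alter the timing and geometry of \emph{all subsequent} burnings in its island, and these effects propagate inward through the nested sequence of islands around $0$. Bounding the diameter of the cluster of $0$ (your $\calH_0$) is not the right quantity: what you would need is that the entire \emph{island} history of $0$ coincides in both processes, and early islands have radius far exceeding $m(N)$ in the infinite process. Your ``applied globally'' sentence gestures at this but does not supply an argument; controlling the inward propagation of boundary discrepancies is essentially as hard as re-proving the full deconcentration result from scratch. Moreover, your intermediate claim that the cluster of $0$ has volume $\leq N^{1-\eta_1(\ve)}$ is strictly stronger than what Theorem~\ref{thm:main_result} asserts (which is only volume $< N$), and you have not indicated how to extract it.
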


The exponent $\frac{48}{91}$ may look somewhat intriguing at first sight. It is associated with a particular ``characteristic length'' $m_{\infty}(N)$, which is natural for the $N$-forest fire process. We explain in Section~\ref{sec:map_next} where it comes from (in \eqref{eq:def_t_inf} and below). Moreover, it can be interpreted as a limit of the \emph{exceptional scales} $m_k(N)$, $k \geq 1$, introduced in \cite{BN2017a} (see \eqref{eq:def_mk} and the subsequent explanation).

Given Theorem~\ref{thm:main_result}, one may wonder if, or even expect that, \eqref{eq:proba_burning} holds not only at a time $t_c + \deltaKMS$ for some $\deltaKMS > 0$, but even at any fixed time $t$. However, it is not clear how to iterate our arguments, and we state this as an open problem.

\begin{question}
Is it true that for \emph{all} $t \geq 0$,
\begin{equation}
\PP_N \big( 0 \text{ burns before time } t \big) \stackrel[N \to \infty]{}{\longrightarrow} 0?
\end{equation}
\end{question}

\subsection{Additional remarks on the proofs}

As we said in Section~\ref{sec:backgr}, the proofs of our main results rely heavily on two earlier works: our paper \cite{BKN2018} with Kiss on frozen percolation, and the paper \cite{KMS2015} by Kiss, Manolescu and Sidoravicius. We first discuss the relation with the latter in Section~\ref{sec:intro_sdp}, before emphasizing in Section~\ref{sec:main-geom} the impact of so-called boundary rules.

\subsubsection{Effect of recoveries: self-destructive percolation} \label{sec:intro_sdp}

We mentioned earlier that a crucial ingredient in the present paper is the percolation-like conjecture of \cite{BB2006}, of which a suitable version was established in \cite{KMS2015}. Here we write ``suitable'' because all the conditional results of \cite{BB2006}, whose proofs assumed the validity of Conjecture~2.1 in that paper, also follow from the alternative version in \cite{KMS2015}, as explained there. We now discuss these matters in a bit more detail, and explain in particular why we had to extend the work of \cite{KMS2015} further to prove our main results.

The crossing estimate of \cite{KMS2015} (Theorem~4 there) can be described in the following way. For $n \geq 1$, we consider the birth process at time $t_c$ (that is, Bernoulli percolation at criticality) in the rectangle $[-3n, 3n] \times [0, n]$. We burn all connected components which touch both the left and right sides of the rectangle, and we then let the birth process evolve further (i.e. vacant vertices independently become occupied at rate $1$). Then for some $\deltaKMS > 0$ which does not depend on $n$, the probability that in the beginning (i.e. in the Bernoulli percolation configuration at $p = p_c$) there is an occupied horizontal crossing of the rectangle, and in the end, at time $t_c + \deltaKMS$, there is a vertical occupied crossing of the subrectangle $[-2n, 2n] \times [0, n]$ tends to $0$ as $n \to \infty$. More precisely, this probability is at most $c n^{-\lambda}$, where $c$, $\lambda > 0$ are two universal constants.

Using arguments in \cite{BB2006}, this has several interesting consequences. In particular, it follows that the conditional result \eqref{eq:BB-N} of \cite{BB2006} is now an unconditional result (Theorem~7 of \cite{KMS2015}). In addition, the ``intuitively natural'' limiting ($N = \infty$) forest fire process, where infinite occupied clusters become vacant as soon as they appear, while finite ones keep growing as usual, does not exist (Theorem~6 of \cite{KMS2015}), a non-existence result which, again, highlights clearly the difficulty to analyze forest fires around time $t_c$.

We revisit some of the geometric ideas in the proof of the percolation-like conjecture in \cite{KMS2015}. Note that if initially there is an occupied horizontal crossing, its destruction also prevents from crossing the box vertically. A key observation in \cite{KMS2015} is that if a new top-bottom connection arises after some amount of time $\deltaKMS > 0$, then there exists a necklace of ``highly connected'' regions, separated by successive cut vertices (\emph{passage sites} in their terminology) which have recovered (after the burnings at time $t_c$). Each of these passage sites then provides locally what is known in percolation theory as a (possibly ``defective'') six-arm configuration: six disjoint paths (namely, two occupied paths, one of which may have a ``defect'', and four vacant ones), each connecting a neighbor of the passage site to some distance. After identifying the existence of such arm configurations, a major difficulty is then to sum them in an appropriate way, with a view to showing that the ``entropy'' contribution, coming from the many arm configurations, gets ``beaten'' by the probability that such a given configuration exists (taking into account that each passage site, since it recovers before time $t_c + \deltaKMS$, contributes, roughly, a ``cost'' $\deltaKMS$: see Propositions~14 and 15 of \cite{KMS2015}).

The summation argument in \cite{KMS2015} is very clever and well-suited for the purposes in that paper, but it also appears to be quite rigid and not very flexible for other situations. Here, we develop a different approach, based on a somewhat abstract and exotic looking six-arm event, whose definition (see Definition~\ref{def:6_arms_modified} below) includes the recovery of the passage sites themselves, and is inspired by the deterministic Lemma~16(i) in \cite{KMS2015}. As it turns out, this has the advantage of allowing for rather direct proofs by induction, and we believe that it gives interesting insight into how recoveries can combine together. We obtain in particular analogs of the above-mentioned Theorem~4 of \cite{KMS2015} which are applicable in a variety of geometric situations, for example when the outermost circuit in a given annulus is burnt (instead of all left-right connections in a rectangle). Moreover, generalizations can quite easily be obtained, for instance where the crossing event considered in the final configuration is not macroscopic (like the vertical crossing of the box) but ``mesoscopic'', or where the time of destruction is more flexible. For those purposes, we introduce the notion of \emph{geodesics} (with respect to recoveries). Finally, this strategy should be useful for potential extensions to the Drossel-Schwabl model, as we explain in Section~\ref{sec:intro_DS}.

\subsubsection{Effect of the boundary rules} \label{sec:main-geom}

We now mention an important geometric issue arising with volume-frozen percolation, related to how boundary vertices of frozen clusters behave. In Section~\ref{sec:backgr} we said that volume-frozen percolation can roughly be considered as an $N$-forest fire without recoveries, writing ``roughly'' because of the following. In \cite{KMS2015} we followed the usual convention, or rule, for frozen percolation that when a cluster freezes, vertices along its outer boundary, which are all (by definition) vacant at that time, then remain vacant forever. A consequence of this boundary rule is that, after each step in the sequence mentioned in Section~\ref{sec:backgr}, the new island can be considered as ``fresh territory''. This spatial Markov property was very useful for the analysis in \cite{KMS2015}, and it indicates, in addition to the absence of recoveries, an extra aspect of the difference between volume-frozen percolation and the $N$-forest fire model that needs to be addressed in the present paper.

In the paper~\cite{BN2017b} it was shown that a change of boundary rules has a substantial impact for another frozen percolation process, introduced in \cite{BLN2012}, where clusters are frozen according to their \emph{diameter} (instead of volume). In the current situation, we will see (and in fact obtain ``almost automatically'', as an auxiliary result from the arguments in the proof of our main theorem) that they do not have a significant effect, so that volume-frozen percolation has some sturdiness in this respect. This has to do with the particular geometry of percolation holes, and we comment more on it in Section~\ref{sec:geom_discussion}.
%But proving that the rules governing boundary vertices are inconsequential ultimately requires some work, and below we obtain it as an intermediate
%conclusion of our reasonings.

Finally, we want to stress that we do \emph{not} prove that the $N$-forest fire process is close to frozen percolation with the same parameter. Roughly speaking, what we show (and is sufficient for the corresponding part of our proof of Theorem \ref{thm:main_result}) is that in the sequence of nested circuits that burn around $0$ after time $t_c$, if we fix any $k \geq 1$, the last $k$ such circuits are not really affected by recoveries, in the limit as $N \to \infty$. However, the total number of nested circuits is known to tend to $\infty$, and our techniques to connect processes in finite domains and in the full lattice do not allow for a comparison for the whole sequence of circuits.

\begin{question}
Can the $N$-frozen percolation and $N$-forest fire processes be compared, as $N \to \infty$? For example, can they be coupled so that at some time $t_c + \deltaKMS$, the islands containing the origin in the two processes are close to each other?
\end{question}

\subsection{Related works}

\subsubsection{Drossel-Schwabl process} \label{sec:intro_DS}

We explained in Section \ref{sec:backgr} that our paper focuses on the $N$-forest fire process, a variant of the Drossel-Schwabl process (abbreviated as D-S process in the rest of this section). An auxiliary complication of the D-S process itself is that although, intuitively, burnt clusters ``typically'' have size of order $\frac{1}{\zeta}$ (so that one expects this process to be similar to the $N$-forest fire process with $N = \frac{1}{\zeta}$), one also has to control the effect of smaller fires on the connectivity properties (and hence on the development of later fires) in the graph.

%The improved understanding about recoveries obtained in the present paper should be useful to analyze the Drossel-Schwabl process itself.
In \cite{BN2018}, we developed a method to control such effects in a forest fire which, like the D-S model, has Poisson ignitions, but which does not have recoveries (i.e. vertices, once burnt, remain vacant forever). A key idea in that paper was to introduce a percolation process where first (possibly large) ``impurities'' are created randomly on the lattice, after which we then study Bernoulli site percolation with parameter $p$ on the remaining graph (i.e. on the ``perforated'' graph, after removal of the impurities). We showed that, for a particular ``heavy-tailed'' choice of the law of the impurities, the resulting percolation process is stochastically dominated by the state of the forest fire process slightly before $t_c$. In \cite{BN2018}, this was then used to prove that a ``rich'' degree of connectivity exists in some prescribed domains, with high probability as $\zeta \searrow 0$, just after time $t_c$ (within a near-critical time window).

An important intermediate step in \cite{BN2018} (see Section~4 there) was to show that, in the above mentioned percolation process with heavy-tailed impurities, certain four-arm events are not too much affected by the impurities. One may expect that the type of \emph{six}-arm events considered below, which play a major role in controlling the effect of recoveries, possess a similar stability property, even though the proof for four arms in \cite{BN2018} is too crude in the case of six-arm events. We hope that the proofs by induction that we develop in Section~\ref{sec:positive_rec_time} of the present paper, for recoveries in the $N$-forest fire model, can be further adapted and refined, allowing us then to control the \emph{combined} impact of recoveries \emph{and} ``small'' fires in the D-S model. We plan to study these ideas further in a future work, and hope to answer the following question.

\begin{question}
Prove analogous results to Theorems~\ref{thm:main_result} and \ref{thm:main_result_finite} for the Drossel-Schwabl model, as the ignition rate $\zeta \searrow 0$.
\end{question}

In \cite{BN2018}, we made use of the percolation process with impurities to highlight the connections and analogies between the Drossel-Schwabl and the $N$-forest fire processes, in particular to show the existence of exceptional scales for the former (so far, under the condition that recoveries are forbidden). This relation was further studied in \cite{LN2021} (by Lam and the second author), where it is shown that near-critical ``avalanches'' of a similar nature arise in the two cases (again, in the absence of recoveries).

\subsubsection{Other literature on these and related subjects}

In \cite{BLN2012}, another frozen-percolation process was considered, that we alluded to earlier, where occupied clusters freeze when their diameter (instead of their volume) crosses the threshold $N$. This diameter-frozen version was later studied extensively by Kiss in \cite{Ki2015}, where a quite detailed description is obtained.

We also mention works on frozen percolation and forest fires on various other graphs, starting with the case where the graph $G$ is a tree. In his influential paper \cite{Al2000}, Aldous constructed a frozen percolation process on the binary tree where a cluster freezes immediately when its size becomes infinite. Benjamini and Schramm \cite{BS1999} pointed out that the analog of Aldous' process does not exist on planar lattices. Aldous also obtained surprisingly explicit quantitative results for his model (i.e. on the binary tree). In our paper \cite{BKN2012} with Kiss, we showed that Aldous' process is, in some sense, the limit, as $N \rightarrow \infty$, of the frozen percolation process where clusters freeze as soon as they have size $N$. In particular, the probability that a given vertex is eventually on a microscopic island does not vanish as $N \rightarrow \infty$ (very differently from the behavior on the triangular lattice, described in our subsequent paper with Kiss \cite{BKN2018}).

A natural open problem which arose from the work of Aldous (see \cite{AB2005}), namely whether his process on the binary tree is \emph{endogenous} (i.e. entirely determined by the birth process) was finally solved, with a \emph{negative} answer, in \cite{RST2021}. See also \cite{RSS2021} for more recent work related to that question.

In the beginning of this paper, we started by ``describing'' a continuous-time version of the Drossel-Schwabl model (which, for simplicity, we from then on also called
Drossel-Schwabl model), namely a forest fire model where ignitions occur at a rate $\zeta$. The existence of this process does not follow (unlike the $N$-forest-fire model) from standard arguments, because the ``interaction'' is not finite-range. Indeed, the size of occupied clusters is not uniformly bounded. For this process, a clever existence proof was given by D\"urre \cite{Du2006}.

Forest fires, frozen percolation and related processes have also been studied on the complete graph (see \cite{RT2009}, \cite{Ra2009}, as well as the more recent paper \cite{MR2017}) and in one dimension (see e.g. the monograph \cite{BF2013}, and the references therein). Finally, we mention the paper \cite{ADKS2015}, which gives results suggesting that for high $d$, forest fires and frozen percolation on the hypercubic lattice $\ZZ^d$ behave in a substantially different way from those in the plane. For a more extensive list of references we refer the reader to the nice discussion in Section~1.7 of \cite{RST2021}.

Finally, we conclude this section by mentioning that in the statistical physics literature (as well as in Earth and life sciences), the Drossel-Schwabl model has been studied as a (supposedly) paradigmatic example where the phenomenon of \emph{self-organized criticality} arises. Most of that literature is of a very heuristic nature, or focuses on simulations. For instance, the well-known paper \cite{MMT1998} uses computer simulations for such models to ``explain'' the power-law behavior followed by sizes of real forest fires (see also e.g. the more recent paper \cite{ZJG2010}). There seems to be controversy in the community of physicists studying self-organized criticality over the nature of the (supposed) criticality of the Drossel-Schwabl model, see e.g. \cite{PJ2020} and the references therein.

\subsection{Organization of the paper} \label{sec:intro_org}

In Section~\ref{sec:percolation}, we first set notations about Bernoulli percolation in two dimensions, and recall its most important properties, especially at and near criticality. We then introduce in Section~\ref{sec:holes} \emph{percolation holes}, which play a central role in our reasonings, and discuss some of their geometric features. We define formally the processes under consideration in Section~\ref{sec:FP_FF}, and introduce an associated iteration map related to successive burnings around a vertex. In Section~\ref{sec:positive_rec_time}, we extend results of \cite{KMS2015} by introducing in particular various idiosyncratic six-arm events, which are well adapted to proofs by induction. That section, which is the core of the paper, derives estimates that show that the combined effect of recoveries remains, in some sense, negligible on large scales for some positive amount of time. These reasonings are used in Section~\ref{sec:stab_recov} to study the evolution of percolation holes under the effect of recoveries, and establish a form of robustness for them. This property is instrumental in the last part, Section~\ref{sec:dec_FF}, where we first analyze our forest fire process in finite domains which are ``neither too small nor too large'', by comparing it to volume-frozen percolation. In such domains, we obtain in this way, in Section~\ref{sec:dec_iteration}, a deconcentration property for forest fires. We then use this intermediate result in Section~\ref{sec:dec_proof} to obtain Theorems~\ref{thm:main_result} and \ref{thm:main_result_finite}. Finally, we provide the proof of some regularity result in an Appendix.

\section{Preliminaries: 2D percolation near criticality} \label{sec:percolation}

In this preliminary section, we first set notations about Bernoulli percolation in Section~\ref{sec:setting}, considering mostly the case of site percolation on the (planar) triangular lattice. In Section~\ref{sec:near_critical_perc}, we then list classical properties of this process, in particular pertaining to its near-critical behavior, which are used extensively later in the paper.

\subsection{Bernoulli site percolation} \label{sec:setting}

In the whole paper, we focus on the triangular lattice $\TT = (V, E)$. It is the graph with vertex set $V = V_{\TT} := \big\{ x + y e^{i \pi / 3} \in \CC \: : \: x, y \in \ZZ \big\}$ (where we use the identification $\CC \simeq \RR^2$), and edge set $E = E_{\TT} := \{ \{v, v'\} \: : \: v, v' \in V \text{ with } |v - v'| = 1 \}$. For any two vertices $v$, $v' \in V$, the notation $v \sim v'$ means that they are connected by an edge ($\{v, v'\} \in E$), in which case they are called neighbors. Let $A \subseteq V$ be a subset of vertices. Its volume is the number $|A|$ of vertices that it contains, and its inner and outer boundaries are defined, respectively, as $\din A := \{v \in A \: : \: v \sim v'$ for some $v' \in A^c\}$ and $\dout A := \{v \in A^c \: : \: v \sim v'$ for some $v' \in A\}$ ($= \din (A^c)$). Finally, we denote by $\Ball_n := [-n,n]^2$ the ball with radius $n \geq 0$ around $0$ for the $L^{\infty}$ norm $\|.\| = \|.\|_{\infty}$, and for $0 \leq n_1 < n_2$, by $\Ann_{n_1,n_2} := \Ball_{n_2} \setminus \Ball_{n_1}$ the annulus with radii $n_1$ and $n_2$ centered on $0$. We also write $\Ball_n(z) := z + \Ball_n$ and $\Ann_{n_1,n_2}(z) := z + \Ann_{n_1,n_2}$, for $z \in \CC$.

Next, we introduce Bernoulli site percolation of $\TT$ with parameter $p \in [0,1]$. It is obtained by designating each vertex $v \in V$ occupied with probability $p$, and vacant with probability $1-p$, independently of the other vertices. We obtain in this way a vertex configuration $\omega = (\omega_v)_{v \in V}$, and we denote by $\Omega := \{0,1\}^V$ the set of all such configurations. The corresponding (product) probability measure is denoted by $\PP_p$. Two vertices $v$, $v' \in V$ are said to be connected, which is denoted by $v \lra v'$, if for some $k \geq 0$, there exists a path of length $k$ from $v$ to $v'$, i.e. a sequence of vertices $v_0 \sim v_1 \sim \ldots \sim v_k$ with $v_0 = v$ and $v_k = v'$, along which all vertices, so in particular $v$ and $v'$, are occupied (such a path is said to be occupied). More generally, we say that two subsets of vertices $A$, $A' \subseteq V$ are connected if there exist $v \in A$ and $v' \in A'$ such that $v \lra v'$, and we use the notation $A \lra A'$. Maximal connected components of occupied vertices are called occupied clusters. Vacant paths and clusters are defined in a similar way, with vacant vertices instead.

Each occupied vertex $v \in V$ belongs to one occupied cluster, that we denote by $\cluster(v)$, setting $\cluster(v) = \emptyset$ when $v$ is vacant. We let $\Cinf$ be the union of all infinite occupied clusters, we write $v \lra \infty$ for the event that $v \in \Cinf$, and we define $\theta(p) := \PP_p(0 \lra \infty)$. Site percolation on $\TT$ displays a phase transition at the percolation threshold $p_c = p_c^{\textrm{site}}(\TT)$, which is known to be equal to $\frac{1}{2}$ \cite{Ke1980}: for all $p \leq p_c = \frac{1}{2}$, there is a.s. no infinite occupied cluster, while for all $p > \frac{1}{2}$, there exists a.s. a unique such cluster. For more background on percolation theory, we refer the reader to the classical references \cite{Ke1982, Gr1999}.

If we consider a rectangle of the form $R = [x_1,x_2] \times [y_1,y_2]$ ($x_1 < x_2$, $y_1 < y_2$), a horizontal crossing in $R$ is an occupied path which starts and ends on the left and right sides of $R$, respectively, while staying in $R$. We use the notation $\Ch(R)$ for the event that such a crossing exists. We define in a similar way vertical crossings in $R$ (with the top and bottom sides instead), denoting their existence by $\Cv(R)$, as well as vacant horizontal and vertical crossings, for which we use the notations $\Ch^*(R)$ and $\Cv^*(R)$, respectively.

If $A = \Ann_{n_1,n_2}(z)$ is an annulus as above, we let $\circuitevent(A)$ (resp. $\circuitevent^*(A)$) be the event that an occupied (resp. vacant) circuit exists in $A$, i.e. an occupied (resp. vacant) path which remains in $A$, starts and ends at the same vertex, and surrounds $\Ball_{n_1}(z)$ once. Let $k \geq 1$, and $\sigma \in \colorseq_k := \{o,v\}^k$ (where we write $o$ and $v$ for ``occupied'' and ``vacant'', resp.). We introduce the arm event $\arm_{\sigma}(A)$, which requires the existence of $k$ disjoint paths $(\gamma_i)_{1 \leq i \leq k}$ in $A$, in counter-clockwise order, each from a vertex of $\dout \Ball_{n_1}(z)$ to a vertex of $\din \Ball_{n_2}(z)$, and with ``type'' (i.e. whether it is an occupied or a vacant path) given by $\sigma_i$. We write $\pi_{\sigma}(n_1,n_2) := \PP_{p_c}( \arm_{\sigma}(\Ann_{n_1,n_2}) )$, and $\pi_{\sigma}(n) := \pi_{\sigma}(0,n)$. In the particular case when $\sigma =(ovo\ldots) \in \colorseq_k$, $k \geq 1$, is alternating, we use the notations $\arm_k$ and $\pi_k$.

\begin{remark} \label{rem:triang_lattice}
In this paper we restrict ourselves to site percolation on $\TT$, even though we expect most of our reasonings to work on any lattice with enough symmetries, such as the square lattice $\ZZ^2$ (and also for similar processes defined, instead, in terms of bond percolation). The reason why we do so is because the starting point of our proofs is a deconcentration property for volume-frozen percolation. This result was established in \cite{BKN2018}, using the construction of the scaling limit of near-critical percolation \cite{GPS2018a} as an important ingredient, in order to derive asymptotic formulas for $\theta(p)$, as well as $L(p)$ (defined in the beginning of Section~\ref{sec:near_critical_perc}), as $p \searrow p_c$.
\end{remark}

\subsection{Behavior at and near criticality: main results} \label{sec:near_critical_perc}

Our results rely heavily on a precise description of two-dimensional Bernoulli percolation around the critical point $p = p_c$, which allows us, by comparison, to analyze the near-critical behavior of forest fire processes. We now collect the classical properties that we use throughout the proofs. As usual, a key tool is the characteristic length $L$, defined as follows:
\begin{equation} \label{eq:def_L}
\text{for $p < p_c = \frac{1}{2}$,} \quad L(p) := \min \big\{ n \geq 1 \: : \: \PP_p \big( \Cv( [0,2n] \times [0,n] ) \big) \leq 0.001 \big\},
\end{equation}
and $L(p) := L(1-p)$ for $p > p_c$. It follows from classical results (the Russo-Seymour-Welsh bounds at $p = p_c$) that $L(p) \to \infty$ as $p \to p_c$, and we set $L(p_c) := \infty$.

The function $L$ is clearly piecewise constant, and as in earlier works, we rather use a regularized version $\tilde L$, obtained in a natural way: we let $\tilde L(0) = \tilde L(1) := 0$, we set $\tilde L(p) := L(p)$ at each point of discontinuity $p \in(0,p_c) \cup (p_c,1)$ of $L$, and we then extend linearly $\tilde L$ to $[0,1] \setminus \{p_c\}$. This new function $\tilde L$ has the nice property of being continuous and strictly increasing (resp. strictly decreasing) on $[0,p_c)$ (resp. $(p_c,1]$), so that it is a bijection from $[0,p_c)$ (resp. $(p_c,1]$) to $[0,\infty)$. In the following, we only use this regularized version $\tilde L$, and for simplicity we write $L$ instead of $\tilde L$.

\begin{enumerate}[(i)]

\item \emph{Russo-Seymour-Welsh-type bounds.} For all $K \geq 1$, there exists $\delta_4 = \delta_4(K) > 0$ such that the following holds. For all $p \in (0,1)$, and $1 \leq n \leq K L(p)$,
\begin{equation} \label{eq:RSW}
\PP_p \big( \Ch( [0,4n] \times [0,n] ) \big) \geq \delta_4 \quad \text{and} \quad \PP_p \big( \Ch^*( [0,4n] \times [0,n] ) \big) \geq \delta_4.
\end{equation}

\item \emph{Exponential decay property.} There exist $\kappa_1$, $\kappa_2 > 0$ such that (see e.g. Lemma~39 in \cite{No2008}): for all $p > p_c$, and $n \geq 1$,
\begin{equation} \label{eq:exp_decay}
\PP_p \big( \Ch( [0, 4n] \times [0,n] ) \big) \geq 1 - \kappa_1 e^{- \kappa_2 \frac{n}{L(p)}}.
\end{equation}

\item \emph{Extendability of arm events.} For all $k \geq 1$, and $\sigma \in \colorseq_k$, there exists $C = C(\sigma) > 0$ such that (see Proposition~16 in \cite{No2008}): for all $0 \leq n_1 < n_2$,
\begin{equation} \label{eq:extendability}
\pi_{\sigma} \bigg( \frac{n_1}{2}, n_2 \bigg), \: \pi_{\sigma}(n_1, 2 n_2) \geq C \pi_\sigma(n_1, n_2).
\end{equation}

\item \emph{Quasi-multiplicativity of arm events.} For all $k \geq 1$, and $\sigma \in \colorseq_k$, there exist $C_1, C_2 > 0$ (depending only on $\sigma$) such that (see Proposition 17 in \cite{No2008}): for all $0 \leq n_1 < n_2 < n_3$,
\begin{equation} \label{eq:quasi_mult}
C_1 \pi_{\sigma}(n_1, n_3) \leq \pi_{\sigma}(n_1, n_2) \pi_{\sigma}(n_2, n_3) \leq C_2 \pi_{\sigma}(n_1, n_3).
\end{equation}

\item \emph{Arm exponents at criticality.} For all $k \geq 1$, and $\sigma \in \colorseq_k$, there exists $\alpha_{\sigma} > 0$ such that
\begin{equation} \label{eq:arm_exponent}
\pi_{\sigma}(k,n) = n^{- \alpha_{\sigma} + o(1)} \quad \text{as $n \to \infty$.}
\end{equation}
Moreover, the value of $\alpha_{\sigma}$ is known, except in the so-called monochromatic case, where $k \geq 2$ and $\sigma \in \colorseq_k$ is constant. For $k=1$, $\alpha_{\sigma} = \frac{5}{48}$, and for each $k \geq 2$, and all non-constant $\sigma \in \colorseq_k$ (i.e. containing both types), $\alpha_{\sigma} = \frac{k^2-1}{12}$. These critical arm exponents were established in \cite{LSW2002, SW2001}, using the conformal invariance property of critical percolation \cite{Sm2001} and properties of the Schramm-Loewner Evolution (SLE) processes \cite{LSW_2001a, LSW_2001b}  (with parameter $\kappa = 6$ here).

\item \emph{Stability for arm events near criticality.} For all $k \geq 1$, $\sigma \in \colorseq_k$, and $K \geq 1$, there exist $C_1$, $C_2 > 0$ (depending on $\sigma$ and $K$) such that (see Theorem 27 in \cite{No2008}): for all $p \in (0,1)$, and $0 \leq n_1 < n_2 \leq K L(p)$,
\begin{equation} \label{eq:near_critical_arm}
C_1 \pi_{\sigma}(n_1, n_2) \leq \PP_p \big( \arm_{\sigma}(\Ann_{n_1, n_2}) \big) \leq C_2 \pi_{\sigma}(n_1, n_2).
\end{equation}

\item \emph{Near-critical formulas for $\theta$ and $L$.} We have (see Theorem 2 in \cite{Ke1987}, or (7.25) in \cite{No2008})
\begin{equation} \label{eq:equiv_theta}
\theta(p) \asymp \pi_1(L(p)) \quad \text{as $p \searrow p_c$,}
\end{equation}
and (see (4.5) in \cite{Ke1987}, or Proposition 34 in \cite{No2008})
\begin{equation} \label{eq:equiv_L}
\big| p - p_c \big| L(p)^2 \pi_4 \big( L(p) \big) \asymp 1 \quad \text{as $p \to p_c$.}
\end{equation}

\item \emph{A-priori bounds on $1$-arm events.} There exist $C_1$, $C_2$, $\beta > 0$ such that we have the following estimates. For all $p \in (0,1)$, and $0 \leq n_1 < n_2 \leq L(p)$,
\begin{equation} \label{eq:1arm}
C_1 \bigg( \frac{n_1}{n_2} \bigg)^{\frac{1}{2}} \leq \PP_p \big( \arm_1(\Ann_{n_1, n_2}) \big) \leq C_2 \bigg( \frac{n_1}{n_2} \bigg)^{\beta}
\end{equation}
(the lower bound follows from the van den Berg-Kesten inequality, while the upper bound is an immediate consequence of \eqref{eq:RSW}).

\item \emph{A-priori lower bound on $4$-arm events.} There exist $C$, $\beta > 0$ such that the following lower bound holds. For all $p \in (0,1)$, and $0 \leq n_1 < n_2 \leq L(p)$,
\begin{equation} \label{eq:4arm}
\PP_p \big( \arm_4(\Ann_{n_1, n_2}) \big) \geq C \bigg( \frac{n_1}{n_2} \bigg)^{2 - \beta}
\end{equation}
(this uses the fact that the ``universal'' arm exponent for $\arm_5$ is equal to $2$, see e.g. Theorem~24~(3) in \cite{No2008}, combined with \eqref{eq:1arm}).

\item \emph{A-priori upper bound on $6$-arm events.} There exist $C$, $\beta > 0$ such that the following upper bound is satisfied. For all $p \in (0,1)$, and $0 \leq n_1 < n_2 \leq L(p)$,
\begin{equation} \label{eq:6arm}
\PP_p \big( \arm_6(\Ann_{n_1, n_2}) \big) \leq C \bigg( \frac{n_1}{n_2} \bigg)^{2 + \beta}
\end{equation}
(similarly to \eqref{eq:4arm}, this uses the arm exponent for $\arm_5$ and \eqref{eq:1arm}).

\item \emph{Volume estimates.} Consider a sequence of integers $n_k \geq 1$, with $n_k \to \infty$ as $k \to \infty$, and $p_k \in (p_c,1)$ ($k \geq 1$). If they satisfy $L(p_k) \ll n_k$ as $k \to \infty$, then (see Theorem~3.2 in \cite{BCKS2001}):
\begin{equation} \label{eq:largest_cluster}
\text{for all $\ve > 0$,} \quad \PP_{p_k} \bigg( \frac{\big| \lclus_{\Ball_{n_k}} \big|}{\big| \Ball_{n_k} \big| \theta(p_k)} \notin (1 - \ve, 1 + \ve) \bigg) \stackrel[k \to \infty]{}{\longrightarrow} 0,
\end{equation}
where $|\lclus_{\Ball_{n_k}}|$ denotes the volume of the largest occupied cluster in $\Ball_{n_k}$.

\end{enumerate}

Finally, we use later the following notations in order to state results with some uniformity in the parameter $p$. For any $p > p_c$ and $K \geq 1$, we introduce $\dul{p}^{(K)} \geq \dol{p}^{(K)} > p_c$ defined via the relations
\begin{equation} \label{eq:def_pK}
L \big( \dul{p}^{(K)} \big) = \frac{1}{K} \cdot L(p) \quad \text{and} \quad L \big( \dol{p}^{(K)} \big) = K \cdot L(p).
\end{equation}

\subsection{Further results: volumes of clusters}

We now state two useful results about the volume of the largest connected component in a box. The first one is an estimate on quantiles which follows from classical reasonings, in particular the arguments in \cite{BCKS2001} leading to \eqref{eq:largest_cluster}. In addition, it turns out to be convenient to require the occurrence of the event $\circuitarm ( \Ann_{\frac{1}{2} n, n} \, | \, \lclus_{\Ball_n} )$ that $\lclus_{\Ball_n}$ contains both an occupied crossing (i.e. an arm) and an occupied circuit in $\Ann_{\frac{1}{2} n, n}$ (see e.g. Lemma~2.1 in \cite{LN2021}).

\begin{lemma} \label{lem:largest_cluster_quantiles}
For all $\ve > 0$, there exist $\ul{x}$, $\ol{x}$, $X > 0$ (depending only on $\ve$) such that: for all $p > p_c$, and $n \geq X L(p)$,
\begin{equation} \label{eq:largest_cluster_quantiles}
\PP_p \bigg( \bigg\{ \frac{\big| \lclus_{\Ball_n} \big|}{n^2 \theta(p)} \in (\ul{x}, \ol{x}) \bigg\} \cap \circuitarm \Big( \Ann_{\frac{1}{2} n, n} \, \big| \, \lclus_{\Ball_n} \Big) \bigg) \geq 1 - \ve.
\end{equation}
\end{lemma}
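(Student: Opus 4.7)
The plan is to bound the complement of \eqref{eq:largest_cluster_quantiles} by a union bound over three pieces: (a) the concentration $|\lclus_{\Ball_n}|/(n^2\theta(p)) \in (\ul{x}, \ol{x})$, (b) the existence in $\Ann_{\frac{1}{2}n, n}$ of both an occupied circuit around $0$ and an occupied arm crossing the annulus, and (c) the identification of the cluster carrying them with $\lclus_{\Ball_n}$. I would take $\ul{x} = 1/2$ and $\ol{x} = 2$ for concreteness; only the uniformity in $p$ of the constant $X(\ve)$ is at stake.

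Event (a) follows from the quantitative form of \eqref{eq:largest_cluster}, namely the second-moment argument in \cite{BCKS2001}: both $\EE_p|\lclus_{\Ball_n}|$ and $\Var_p|\lclus_{\Ball_n}|$ can be controlled purely in terms of the ratio $n/L(p)$, via \eqref{eq:exp_decay} and \eqref{eq:equiv_theta}, so Chebyshev's inequality yields (a) with probability $\geq 1 - \ve/3$ for all $n \geq X_1(\ve) L(p)$, uniformly in $p > p_c$. For (b), I would apply \eqref{eq:exp_decay} to $O(1)$ fixed rectangles of aspect ratio $4{:}1$ with short side $\sim n/16$, arranged inside $\Ann_{\frac{1}{2}n, n}$ so that their occupied horizontal crossings concatenate into the required circuit and a radial arm from $\dout \Ball_{\frac{1}{2}n}$ to $\din \Ball_n$ (the circuit and the arm then automatically intersect by planarity, hence share a vertex). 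Each such crossing has probability $\geq 1 - \kappa_1 e^{-\kappa_2 n/(16 L(p))}$, so a union bound over the $O(1)$ crossings produces (b) with probability $\geq 1 - \ve/3$ for $n \geq X_2(\ve) L(p)$.

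Step (c) carries the main content and is the step I expect to be the main obstacle. I would strengthen the previous construction into a ``dyadic net'' of occupied structures: for each $k \geq 0$ with $C 2^k L(p) \leq n$, \eqref{eq:exp_decay} applied to $O(1)$ long rectangles inside $\Ann_{C 2^k L(p),\, 2 C 2^k L(p)}$ produces an occupied circuit together with a radial arm joining it to the circuit at scale $k+1$, with failure probability $O(\kappa_1 e^{-\kappa_2 C 2^k})$. Summed geometrically in $k$, the total failure probability is $\leq \ve/6$ for $C = C(\ve)$ large, uniformly in $n \geq C L(p)$ and $p > p_c$. All pieces of this net lie in a single occupied cluster $\mathcal{N}$, which contains the arm and circuit of (b). To identify $\mathcal{N}$ with $\lclus_{\Ball_n}$, I would use a first- and second-moment estimate (in the style of \cite{BCKS2001}) showing that $|\mathcal{N}|$ itself concentrates around $n^2 \theta(p)$: each bulk vertex of $\Ball_n$ is in $\mathcal{N}$ with probability $\theta(p)(1 + o(1))$, so $\EE_p |\mathcal{N}| \sim n^2 \theta(p)$, with matching variance bound via \eqref{eq:exp_decay}. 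Combined with (a), any occupied cluster of volume $\geq \frac{\ul{x}}{2} n^2 \theta(p)$ must equal $\lclus_{\Ball_n}$, and this forces $\mathcal{N} = \lclus_{\Ball_n}$. The technical core is precisely this uniform-in-$(n, p)$ concentration of $|\mathcal{N}|$, which is however standard supercritical finite-size-scaling material once the net has been built.

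Setting $X := \max(X_1, X_2, C)$ and combining (a), (b) and (c) by a final union bound then yields \eqref{eq:largest_cluster_quantiles}.
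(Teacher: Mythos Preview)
The paper does not actually give a proof of this lemma: it simply says that it ``follows from classical reasonings, in particular the arguments in \cite{BCKS2001}'' and refers to Lemma~2.1 in \cite{LN2021}. Your plan is a reasonable unpacking of exactly those references, and steps (a) and (b) are correct as written.

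There is, however, a genuine (if small) gap in step (c). From ``$|\mathcal{N}|$ concentrates around $n^2\theta(p)$'' and ``$|\lclus_{\Ball_n}|$ concentrates around $n^2\theta(p)$'' you cannot conclude $\mathcal{N}=\lclus_{\Ball_n}$: two distinct occupied clusters could in principle each have volume close to $n^2\theta(p)$ without contradicting (a), since $2\theta(p)<p$ near $p_c$. What you need is the \emph{uniqueness} of the macroscopic cluster, i.e.\ that the second-largest cluster in $\Ball_n$ has volume $o(n^2\theta(p))$. This is also in \cite{BCKS2001}, but it is a separate statement from (a) and should be invoked explicitly rather than claimed to follow ``combined with (a)''.

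A cleaner and more self-contained route for (c), closer to what the paper has available, is to replace your concentric circuits by a full grid net as in Lemma~\ref{lem:exist_net}: take mesh $\kappa = C' L(p)$ with $C'=C'(\ve)$ large. Then every occupied cluster in $\Ball_n$ other than the net cluster has diameter at most $4\kappa$, hence (by the volume estimates of \cite{BCKS2001} at scale $L(p)$) volume $o(n^2\theta(p))$ uniformly once $n/L(p)$ is large. This forces the net cluster to equal $\lclus_{\Ball_n}$ directly, and the net already contains the circuit and arm in $\Ann_{\frac{1}{2}n,n}$ by construction, so no separate volume concentration of $\mathcal{N}$ is needed.
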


We will also need a more precise upper bound for the existence of an abnormally large cluster, stated as Lemma~4.4 in \cite{BKN2018} (where it is obtained quite directly from Proposition~4.3 (iii) of \cite{BCKS2001}).

\begin{lemma} \label{lem:largest_cluster_exp}
There exist $C_1$, $C_2$, $X > 0$ such that: for all $p > p_c$, $n \geq L(p)$, and $x \geq X$,
\begin{equation} \label{eq:largest_cluster_exp}
\PP_p \big( \big| \lclus_{\Ball_n} \big| \geq x n^2 \theta(p) \big) \leq C_1 e^{- C_2 x \frac{n^2}{L(p)^2}}.
\end{equation}
\end{lemma}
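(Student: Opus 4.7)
The plan is to prove \eqref{eq:largest_cluster_exp} by a finite-size scaling / block-renormalization argument at the correlation length $\ell := L(p)$, following the strategy of Proposition~4.3(iii) in \cite{BCKS2001}. Set $m := \lceil n/\ell \rceil^2$, so that the target scales become $x m \ell^2 \theta(p)$ (for the volume) and $m$ (for the exponent on the right-hand side). First, I would tile $\Ball_n$ (up to a negligible boundary layer) into $m$ disjoint translates of $\Ball_\ell$, call them $R_1, \ldots, R_m$, on which the Bernoulli configurations are independent.

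The first ingredient is a single-block exponential right tail: there exist $c, C > 0$ such that for every $p > p_c$ and every $y \geq 1$,
$$\PP_p\big( |\lclus_{R_i}| \geq y\, \ell^2\, \theta(p) \big) \leq C e^{-c y}.$$
This is a classical finite-size scaling estimate which I would derive from \eqref{eq:RSW} and \eqref{eq:equiv_theta} (together giving that $|\lclus_{R_i}|$ is concentrated on scale $\ell^2 \theta(p) \asymp \ell^2 \pi_1(\ell)$), upgraded to an exponential tail by a standard sprinkling/doubling argument iterated with BK inequalities, in the spirit of Section~4 of \cite{BCKS2001}.

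Next, I would set up a block renormalization. Call $R_i$ \emph{good} if it contains an occupied cluster $\mathcal{C}^{\star}_i$ with $|\mathcal{C}^{\star}_i| \in [c_1 \ell^2 \theta(p), y_0 \ell^2 \theta(p)]$, and every cluster of $R_i$ of $L^\infty$-diameter at least $\ell/100$ is contained in $\mathcal{C}^{\star}_i$. Using \eqref{eq:RSW}, \eqref{eq:near_critical_arm}, a uniqueness-of-large-cluster estimate at scale $\ell$, and the single-block tail above, a block is good with probability at least $1 - \delta$, where $\delta$ can be made arbitrarily small by choosing $y_0$ large and $c_1$ small. Crucially, the events $\{R_i \text{ good}\}$ are i.i.d.\ across $i$ since the $R_i$ are disjoint.

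The main obstacle, and the geometric crux, is to show that on the event that all $R_i$ are good, $|\lclus_{\Ball_n}| \leq C y_0\, n^2\, \theta(p)$. The restriction of a $\Ball_n$-cluster to a good block $R_i$ splits into a subset of $\mathcal{C}^{\star}_i$ (of volume $\leq y_0 \ell^2 \theta(p)$) together with sub-clusters of diameter $< \ell/100$; the combined volume of the latter is controlled, after a union bound over $R_i$, by the one-arm estimate \eqref{eq:1arm} applied at scale $\ell/100$. Consequently, if $|\lclus_{\Ball_n}| \geq x\, n^2\, \theta(p)$ with $x \geq X := 2 C y_0$, at least $c_0 x m$ blocks must be bad, and by independence
$$\PP_p\big( \#\{\text{bad blocks}\} \geq c_0 x m \big) \leq \binom{m}{\lceil c_0 x m \rceil} \delta^{\lceil c_0 x m \rceil} \leq e^{-c' x m}$$
provided $\delta$ is small enough, which combined with the negligible contribution of small sub-clusters yields \eqref{eq:largest_cluster_exp}.
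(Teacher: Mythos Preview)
Your overall architecture (tile at scale $\ell=L(p)$, single-block exponential tail, then aggregate) is indeed that of \cite{BCKS2001}, to which the paper simply defers. However, the aggregation step you wrote contains a genuine gap.

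You claim that if $|\lclus_{\Ball_n}|\geq x\,n^2\theta(p)$ and $x\geq X=2Cy_0$, then at least $c_0 x m$ blocks are bad. This is false for $p$ close to $p_c$. With your fixed threshold $y_0$, a \emph{good} block contributes at most $y_0\,\ell^2\theta(p)$ to the big cluster, but a \emph{bad} block can contribute as much as its full volume $\asymp \ell^2$ (not $\ell^2\theta(p)$). Writing $k$ for the number of bad blocks, the inequality $x m\,\ell^2\theta(p)\leq k\,\ell^2+(m-k)\,y_0\,\ell^2\theta(p)$ only yields $k\geq c\,(x-y_0)\,m\,\theta(p)$, i.e.\ $k\gtrsim x m\,\theta(p)$, \emph{not} $k\gtrsim x m$. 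Plugging this into your binomial bound produces an exponent of order $x m\,\theta(p)$ (or at best $x^2 m\,\theta(p)$ with an $x$-dependent threshold), which is strictly weaker than the required $x m=x\,n^2/L(p)^2$ whenever $\theta(p)$ is small. The one-arm control of the ``small sub-clusters'' does not repair this: it is the trivial bound $V_i\leq |R_i|\asymp\ell^2$ on bad blocks that kills the counting, and no arm estimate improves that.

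The standard fix, and what actually underlies \cite[Prop.~4.3(iii)]{BCKS2001}, is to replace the good/bad dichotomy by an exponential-moment (Chernoff) argument. Take $W_i$ to be the number of vertices in $R_i$ connected to the boundary of the doubled block $2R_i$; your single-block tail gives $\EE_p[e^{\lambda W_i/(\ell^2\theta(p))}]\leq M<\infty$ for some small $\lambda>0$, uniformly in $p>p_c$. Since $|\lclus_{\Ball_n}|$ (when its diameter exceeds $2\ell$) is dominated by a sum of $O(1)$ families of independent copies of $W_i$, Markov's inequality gives
\[
\PP_p\Big(\textstyle\sum_i W_i\geq x m\,\ell^2\theta(p)\Big)\leq e^{-\lambda x m}\,M^m=e^{-(\lambda x-\log M)m},
\]
which for $x\geq X:=2\lambda^{-1}\log M$ is at most $e^{-(\lambda/2)\,x m}$, i.e.\ exactly \eqref{eq:largest_cluster_exp}. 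The point is that the exponential moment bound automatically handles all scales of ``badness'' simultaneously, which a single fixed threshold cannot.
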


For $\kappa$ and $n \geq 1$, we denote by $\net(n,\kappa)$ the event that in each of the (horizontal and vertical) rectangles
$$\frac{\kappa}{2} \big( [-4, 4] \times [-1, 1] + (6i, 6j-3) \big) \quad \text{and} \quad \frac{\kappa}{2} \big( [-1, 1] \times [-4, 4] + (6i-3, 6j) \big)$$
($i$, $j$ integers) that intersect the box $\Ball_n$, there exists an occupied crossing in the ``difficult'' direction. These crossings together imply the existence of an occupied connected subset $\net$ of $\Ball_n$, called \emph{net with mesh $\kappa$}, having the property that all the connected components of its complement in $\Ball_n$ have a diameter at most $4 \kappa$ (so in particular, this is the case for all occupied and vacant connected components in $\Ball_n$, other than the occupied cluster of $\net$). In addition, we observe that $\net(n,\kappa)$ does not depend on the vertices in $\Ball_{\kappa}$. Lemma~\ref{lem:largest_cluster_exp} will be used in combination with the following result, which is an immediate consequence of \eqref{eq:exp_decay} (see e.g. Lemma~2.2 in \cite{BN2018}).

\begin{lemma} \label{lem:exist_net}
There exist $C_1$, $C_2 > 0$ such that: for all $p > p_c$, and $n \geq \kappa \geq 1$,
\begin{equation} \label{eq:exist_net}
\PP_p \big( \net(n,\kappa) \big) \geq 1 - C_1 \Big( \frac{n}{\kappa} \Big)^2 e^{-C_2 \frac{\kappa}{L(p)}}.
\end{equation}
\end{lemma}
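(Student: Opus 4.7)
The plan is to reduce the lemma to a straightforward union bound on top of the exponential decay estimate \eqref{eq:exp_decay}. The event $\net(n,\kappa)$ requires, for each of a grid-like family of rectangles, an occupied crossing in the long (``difficult'') direction; if any one of these crossings fails, $\net(n,\kappa)$ fails.

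First, I would count the rectangles meeting $\Ball_n$. The horizontal family consists of the translates $\frac{\kappa}{2}([-4,4] \times [-1,1] + (6i, 6j-3))$ for $(i,j) \in \ZZ^2$. After the scaling by $\kappa/2$, these are rectangles of dimensions $4\kappa \times \kappa$ whose centers lie on a rectangular grid of spacing of order $\kappa$, so the number that intersect $\Ball_n$ is at most $C_0 (n/\kappa)^2$ for some absolute constant $C_0$; the same holds for the analogous vertical family. Next, I would apply \eqref{eq:exp_decay} to each rectangle: each horizontal one is a translate of a $4\kappa \times \kappa$ rectangle, and the required crossing is a horizontal crossing of that rectangle (i.e., across its long side). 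By translation invariance and \eqref{eq:exp_decay} with $n$ there replaced by $\kappa$, the probability that this crossing fails is at most $\kappa_1 e^{-\kappa_2 \kappa / L(p)}$. The same bound applies to the vertical rectangles by symmetry, so a union bound over the at most $2 C_0 (n/\kappa)^2$ rectangles yields
$$\PP_p \big( \net(n,\kappa)^c \big) \leq 2 C_0 \kappa_1 \Big( \frac{n}{\kappa} \Big)^2 e^{-\kappa_2 \kappa / L(p)},$$
and taking $C_1 := 2 C_0 \kappa_1$ and $C_2 := \kappa_2$ finishes the proof.

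I do not expect any substantive obstacle: the lemma is advertised as ``an immediate consequence'' of \eqref{eq:exp_decay}, and indeed the only real work is the bookkeeping in the counting step. The underlying deterministic geometric statement, that the existence of these crossings forces every complementary component of the resulting net to have diameter at most $4\kappa$, is asserted in the paragraph introducing $\net(n,\kappa)$ and is independent of the probabilistic estimate to be proved here.
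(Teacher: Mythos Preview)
Your proposal is correct and matches the paper's approach: the paper does not spell out a proof but simply notes that the lemma is an immediate consequence of the exponential decay estimate \eqref{eq:exp_decay} (referring to Lemma~2.2 in \cite{BN2018}), which is exactly the union-bound argument you give.
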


\section{Geometric considerations on percolation holes} \label{sec:holes}

In this section, we discuss some geometric properties of a key object, called ``percolation hole''. More precisely, we first define two types of such holes in Section~\ref{sec:def_holes}, weak and strong holes, and we state some of their elementary properties. We then turn, in Section~\ref{sec:geom_approx}, to their approximations on a grid, which play a key role later in our proofs. Finally, we comment on some of the geometric difficulties in Section~\ref{sec:geom_discussion}, in particular coming from the fact that weak holes possess ``dangling ends'', and are thus not well-approximable on a grid.

\subsection{Definitions: weak and strong holes} \label{sec:def_holes}

We start with some geometric properties of the infinite occupied cluster, or, rather, of the connected components of $\TT$ which are left when one removes it. We know that in the supercritical regime $p > p_c$, there exists a.s. a unique infinite occupied cluster $\Cinf$, which strongly disconnects the lattice: its complement contains only finite connected components, that we call ``holes''. In other words, any given vertex either belongs to $\Cinf$, or it is surrounded by $\Cinf$ and belongs to a hole.

More precisely, we define the following two notions of ``holes'', shown on Figure~\ref{fig:holes}. First, the \emph{weak hole} of the origin $\holew$ is the connected component containing $0$ in $V \setminus \Cinf$. The \emph{strong hole} $\holes$ is the component of $0$ in $V \setminus (\Cinf \cup \dout \Cinf)$, i.e. when we also remove the (vacant) neighbors of $\Cinf$. Note that $\holew = \holes = V$ if $\Cinf = \emptyset$, that is, if there is no infinite occupied cluster. By convention, we set $\holew = \emptyset$ (resp. $\holes = \emptyset$) when $0 \in \Cinf$ (resp. $0 \in \Cinf \cup \dout \Cinf$). Clearly $\holes \subseteq \holew$, and they are both simply connected (see Figure~\ref{fig:holes} for an illustration). We sometimes use the terminology \emph{bottleneck sites} (in $\holew$) for vertices $v \in \holew$ such that $\holew \setminus \{v\}$ is no longer connected (note that necessarily, $v \in \din \holew$).

\begin{figure}[t]
\begin{center}

\includegraphics[width=.75\textwidth]{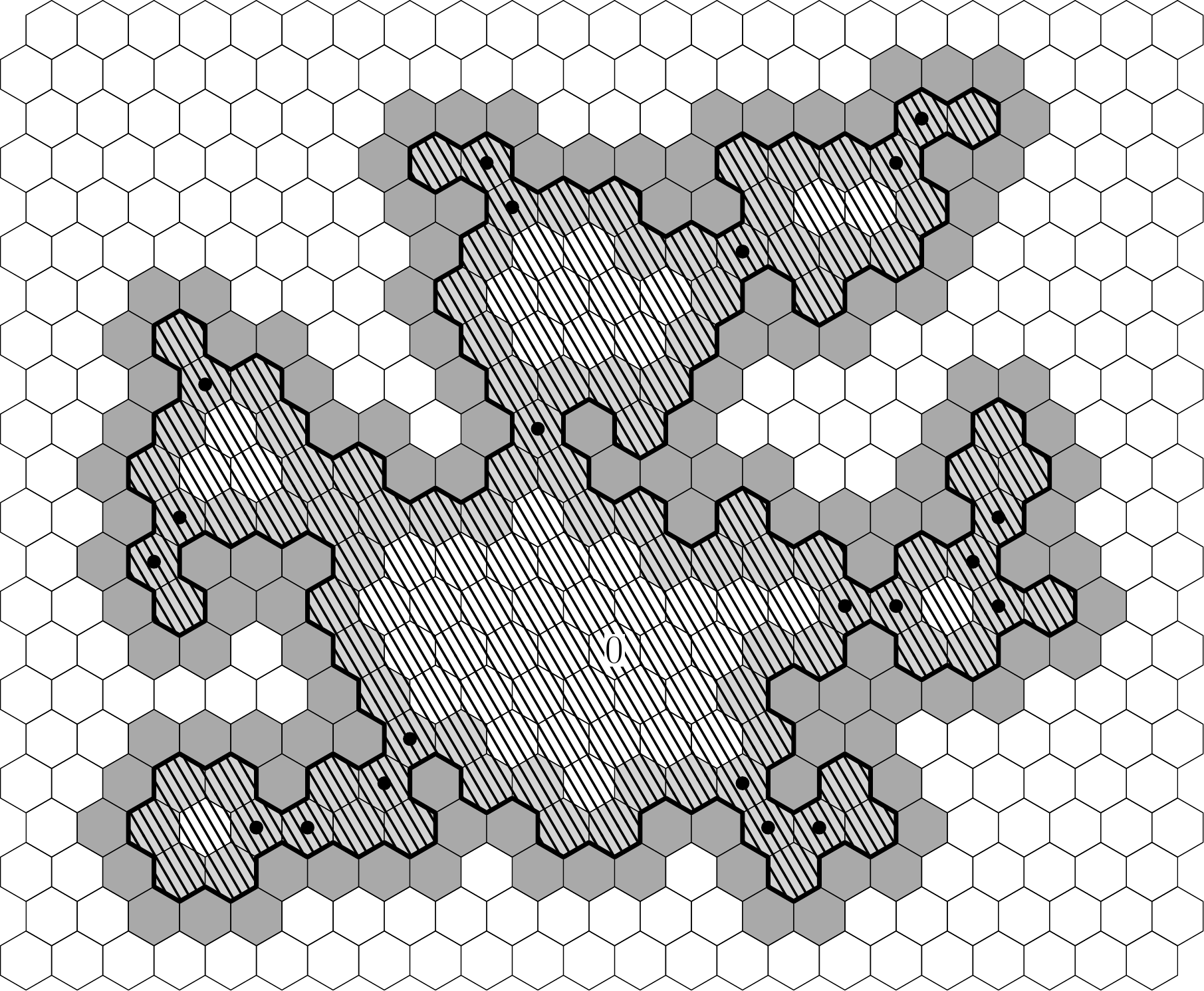}
\caption{\label{fig:holes} On this figure, site percolation on $\TT$ is depicted, as usual, as face percolation on the dual hexagonal lattice. The shaded region is the weak hole $\holew$ of the origin. The set $\dout \holew$, in dark gray, consists of occupied vertices belonging to $\Cinf$, while the vertices in $\din \holew$ ($\subseteq \dout \Cinf$), in light gray, are vacant. The strong hole $\holes$ of $0$ is the connected component of white sites containing this vertex. The vertices marked with a disk are bottleneck sites in $\holew$.}

\end{center}
\end{figure}

The weak hole is exactly the region around the origin left unaffected when the infinite cluster burns, while the strong hole contains the vertices about which the data of $\Cinf$ does not give any information, i.e. which are left untouched when $\Cinf$ is explored ``starting from $\infty$''. These objects play an important role in the proofs of Theorems~\ref{thm:main_result} and \ref{thm:main_result_finite}, when studying the successive burnings of clusters around $0$. More specifically, we follow closely the strong hole of $0$, but weak holes are not used explicitly, even though they may seem to be a more natural object at first sight.

\begin{remark} \label{rem:arms_holes}
Note that the vertices of $\din \holew$ and $\dout \holew$ are all vacant and all occupied, respectively. Hence, from any $v \in \din \holew \cup \dout \holew$, one can find two arms (both starting from a neighbor of $v$): one vacant and one occupied, extracted from $\din \holew$ and $\dout \holew$, respectively. On the other hand, $\dout \holes$ is a vacant circuit, and each of its vertices $v$ has an occupied neighbor $v'$ belonging to $\Cinf$. One can thus construct three arms starting from neighbors of $v$, two vacant arms (obtained by following $\dout \holes$ in both directions), and one occupied arm (using the path connecting $v'$ to $\infty$). The existence of three such arms along the boundary of $\holes$ was used in a crucial way in \cite{BKN2018} (see Remark~3.2 there).
\end{remark}

Combined with \eqref{eq:1arm} and Markov's inequality, the existence of arms as explained in Remark~\ref{rem:arms_holes} can be used to prove that the boundaries of $\holew$ and $\holes$ both have a negligible size. The following a-priori bounds on $\holew$ and $\holes$ will be useful later.

\begin{lemma} \label{lem:bound_holes}
There exist universal constants $c$, $\kappa > 0$ such that the following estimates hold for all $p > p_c$.
\begin{enumerate}[(i)]
\item For all $\lambda \geq 1$,
\begin{equation} \label{eq:bound_holes1}
\PP_p \big( \holew \subseteq \Ball_{\lambda L(p)} \big) \geq 1 - c \, e^{-\kappa \lambda}.
\end{equation}

\item For all $\lambda \leq 1$,
\begin{equation} \label{eq:bound_holes2}
\PP_p \big( \Ball_{\lambda L(p)} \subseteq \holes \big) \geq 1 - c \, \pi_1 \big( \lambda L(p), L(p) \big).
\end{equation}
\end{enumerate}
\end{lemma}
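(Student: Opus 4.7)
For part (i), the plan is to produce, with probability at least $1 - c e^{-\kappa \lambda}$, an occupied circuit around $0$ inside $\Ann_{\lambda L(p)/2, \lambda L(p)}$ that belongs to $\Cinf$; any such circuit forces $\holew \subseteq \Ball_{\lambda L(p)}$, since a path in $V \setminus \Cinf$ starting at $0$ cannot cross it. The circuit itself is built from the ``easy'' occupied crossings of four overlapping rectangles of aspect ratio $4$, each with shorter side of order $\lambda L(p)/4$, arranged so that their crossings glue into a circuit in $\Ann_{\lambda L(p)/2, \lambda L(p)}$; by the supercritical exponential decay \eqref{eq:exp_decay}, each such crossing holds with probability at least $1 - \kappa_1 e^{-\kappa_2 \lambda}$. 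To force this circuit to lie in $\Cinf$, I would iterate the same construction outward on geometrically growing scales $2^k \lambda L(p)$, $k \geq 1$, by adding ``easy'' occupied crossings of long thin rectangles providing radial crossings of $\Ann_{2^{k-1} \lambda L(p), 2^{k+1} \lambda L(p)}$. Each such radial crossing topologically intersects its neighbors as well as the initial circuit, so the union of all these occupied pieces is connected and unbounded, hence contained in $\Cinf$. FKG and a union bound on complements then give the joint probability at least $1 - c \sum_{k \geq 0} e^{-\kappa 2^k \lambda} \geq 1 - c' e^{-\kappa' \lambda}$ with universal constants.

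For part (ii), the starting observation is that $\Ball_{\lambda L(p)} \not\subseteq \holes$ can only happen when some vertex of $\Ball_{\lambda L(p)} \cup \dout \Ball_{\lambda L(p)}$ lies in $\Cinf$, that is, when $\Ball_{\lambda L(p)+1} \lra \infty$. Restricting this $1$-arm event to $\dout \Ball_{L(p)}$ and applying the near-critical stability \eqref{eq:near_critical_arm} (with $K=2$ to handle the boundary case $\lambda$ close to $1$), together with extendability \eqref{eq:extendability} to absorb the ``$+1$'' shift, yields
\begin{equation*}
\PP_p\bigl( \Ball_{\lambda L(p)} \not\subseteq \holes \bigr) \leq \PP_p\bigl( \arm_1(\Ann_{\lambda L(p)+1, L(p)}) \bigr) \leq c \, \pi_1(\lambda L(p), L(p)),
\end{equation*}
which is the desired bound.

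The main technical point I anticipate is keeping the constants $c, \kappa$ uniform in $p > p_c$. For (i) this follows directly from \eqref{eq:exp_decay} and FKG, both providing universal constants. For (ii), some care is needed only in the degenerate range $\lambda L(p) \lesssim 1$, where $\Ball_{\lambda L(p)}$ consists of a bounded number of vertices and the ``$+1$'' shift is no longer negligible; there one falls back on the trivial estimate $\PP_p(0 \in \Cinf \cup \dout \Cinf) \lesssim \theta(p) \asymp \pi_1(L(p)) \asymp \pi_1(\lambda L(p), L(p))$ via \eqref{eq:equiv_theta} and \eqref{eq:extendability}, which still matches the target bound up to a universal constant.
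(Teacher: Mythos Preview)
Your proposal is correct and follows essentially the same route as the paper, which simply cites the standard near-critical toolbox (\eqref{eq:RSW}, \eqref{eq:exp_decay}, \eqref{eq:extendability}, \eqref{eq:near_critical_arm}) and defers details to Lemma~3.3 of \cite{BKN2018}. Your write-up is in fact more explicit than the paper's; the only cosmetic point is that in part~(i) you should make sure the radial crossings at successive scales are placed along a common ray (or interlaced with circuits) so that consecutive ones genuinely intersect, and FKG is not actually needed since a union bound on the complements already suffices.
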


\begin{proof}[Proof of Lemma~\ref{lem:bound_holes}]
These two bounds follow quite directly from the standard properties of two-dimensional percolation recalled in Section~\ref{sec:near_critical_perc}, in particular \eqref{eq:RSW}, \eqref{eq:exp_decay}, \eqref{eq:extendability}, and \eqref{eq:near_critical_arm}. We refer the reader to Lemma~3.3 in \cite{BKN2018} for more details (note that the notion of holes considered in \cite{BKN2018} correspond to what we call strong holes, but the proof of \eqref{eq:bound_holes1} in this paper also applies to weak holes).
\end{proof}

Later, Lemma~\ref{lem:bound_holes} is often used through the following property, which is an immediate consequence of it (and \eqref{eq:1arm}). For all $\ve > 0$, there exist $\ol{\lambda} > \ul{\lambda} > 0$ (depending only on $\ve$) such that: for all $p > p_c$ close enough to $p_c$ (in terms of $\ve$),
\begin{equation} \label{eq:holes_apriori}
\PP_p \big( \Ball_{\ul{\lambda} L(p)} \subseteq \holes \subseteq \holew \subseteq \Ball_{\ol{\lambda} L(p)} \big) \geq 1 - \ve.
\end{equation}

We also consider analogous notions of holes in any simply connected domain $\Lambda \subseteq V$ containing $0$, obtained by replacing $\Cinf$ by the set of vertices connected to the boundary of $\Lambda$. More precisely, we introduce
\begin{equation} \label{eq:def_hole_domain}
\cluster_{\din \Lambda} := \big\{ v \in \Lambda \: : \: v \lra \din \Lambda \text{ in } \Lambda \big\},
\end{equation}
and then the weak and strong holes of the origin in $\Lambda$, denoted by $\holew_{\Lambda}$ and $\holes_{\Lambda}$, and defined as the connected component containing $0$ in $\Lambda \setminus \cluster_{\din \Lambda}$ and in $\Lambda \setminus (\cluster_{\din \Lambda} \cup \dout \cluster_{\din \Lambda} \cup \din \Lambda)$, respectively (each of them possibly taken to be $\emptyset$).

\begin{lemma} \label{lem:hole_domain}
There exist universal constants $c$, $\kappa > 0$ such that the following holds. For all $p > p_c$, and $\lambda \geq 1$, if $\Ball_{\lambda L(p)} \subseteq \Lambda$, then
\begin{equation}
\PP_p \big( \holew = \holew_{\Lambda} \text{ and } \holes = \holes_{\Lambda} \big) \geq 1 - c \, e^{-\kappa \lambda}.
\end{equation}
\end{lemma}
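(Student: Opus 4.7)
The plan is to split $\holew = \holew_\Lambda$ and $\holes = \holes_\Lambda$ into two pairs of inclusions. The inclusions $\holew_\Lambda \subseteq \holew$ and $\holes_\Lambda \subseteq \holes$ will be shown to hold deterministically (for any bounded $\Lambda$), while the reverse inclusions $\holew \subseteq \holew_\Lambda$ and $\holes \subseteq \holes_\Lambda$ will be established on the single event
\[
F := \big\{ \holew \subseteq \Ball_{\lambda L(p)/2} \big\}.
\]
Lemma~\ref{lem:bound_holes}~(i), applied with $\lambda/2$, gives $\PP_p(F^c) \leq c e^{-\kappa \lambda /2}$, and absorbing the prefactor $e^{\kappa/2}$ into the constants yields the required $c e^{-\kappa \lambda}$ bound.

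The deterministic half rests on the observation that $\Cinf \cap \Lambda \subseteq \cluster_{\din \Lambda}$: truncating an infinite occupied path from any $u \in \Cinf \cap \Lambda$ at its first exit from $\Lambda$ produces an occupied path inside $\Lambda$ connecting $u$ to $\din \Lambda$. A defining path of $\holew_\Lambda$ therefore automatically avoids $\Cinf$, giving $\holew_\Lambda \subseteq \holew$. For $\holes_\Lambda \subseteq \holes$, a vertex $u$ on a defining path of $\holes_\Lambda$ is not in $\din \Lambda$, hence any neighbor $u' \in \Cinf$ lies in $\Lambda$ and thus in $\cluster_{\din \Lambda}$ by the above, placing $u$ in $\dout \cluster_{\din \Lambda}$, which contradicts the definition of $\holes_\Lambda$; so the path also avoids $\dout \Cinf$.

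The probabilistic half exploits the elementary topological fact $\dout \holew \subseteq \Cinf$: any vertex of $\dout \holew$ lying outside $\Cinf$ would be adjacent to $\holew \subseteq V \setminus \Cinf$ and thus join $\holew$ itself, a contradiction. On $F$, $\holew \cup \dout \holew \subseteq \Ball_{\lambda L(p)/2 + 1}$, whereas the hypothesis $\Ball_{\lambda L(p)} \subseteq \Lambda$ forces $\din \Lambda \subseteq V \setminus \Ball_{\lambda L(p) - 1}$; so $\dout \holew$ acts as a shielding occupied belt strictly separating $\holew$ from $\din \Lambda$ (for $\lambda L(p)$ above an absolute constant; smaller values are absorbed into $c$). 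Given $v \in \holew$, any hypothetical occupied path in $\Lambda$ from a vertex $u$ on the defining $\holew$-path of $v$ to $\din \Lambda$ must leave $\holew$ and therefore traverse $\dout \holew \subseteq \Cinf$, forcing $u \in \Cinf$ and contradicting $u \in \holew$; this yields $\holew \subseteq \holew_\Lambda$. For $\holes \subseteq \holes_\Lambda$ the only additional ingredient is a short case analysis on a hypothetical neighbor $u' \in \cluster_{\din \Lambda}$ of a vertex $u$ on the $\holes$-path of $v$: either $u' \in \holew$, and the shielding argument applied to $u'$ gives $u' \in \Cinf \cap \holew = \emptyset$; or $u' \notin \holew$, and then $u' \in \dout \holew \subseteq \Cinf$ directly, so $u \in \dout \Cinf$, contradicting $u \in \holes$. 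I expect this case analysis, together with the bookkeeping of radii ensuring that the shielding belt genuinely separates $\holew$ from $\din \Lambda$, to be the main (mild) subtlety; beyond it, the argument reduces to the planar observation $\dout \holew \subseteq \Cinf$ and the single probabilistic input from Lemma~\ref{lem:bound_holes}~(i).
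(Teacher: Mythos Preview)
Your proof is correct and follows essentially the same approach as the paper. The paper's proof is a one-line reference to \eqref{eq:bound_holes1} together with a pointer to Remark~4.3 in \cite{BKN2018}; your argument spells out precisely the deterministic containments and the shielding-by-$\dout\holew$ reasoning that this reference stands in for, with the same single probabilistic input from Lemma~\ref{lem:bound_holes}(i).
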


\begin{proof}[Proof of Lemma~\ref{lem:hole_domain}]
This follows immediately from \eqref{eq:bound_holes1}, see Remark~4.3 in \cite{BKN2018}.
\end{proof}

\subsection{Geometry of holes: bottlenecks and approximability} \label{sec:geom_approx}

Several regularity properties of the strong hole $\holes$, near criticality, were established in Section~3 of \cite{BKN2018}. They played a crucial role in the subsequent proofs in this paper, where we analyzed the clusters freezing successively around the origin, by following the holes that they created. These properties turn out to be useful in the present paper as well, and we remind them now. But before that, we need to give a few definitions.

The \emph{$\ell$-grid} is the square grid with side length $\ell > 0$, subdividing the plane into the squares $S^{(\ell)}_{i_1,i_2} = \ell \cdot ((i_1, i_2) + [-\frac{1}{2},\frac{1}{2}]^2)$, with $i_1$, $i_2 \in \ZZ$, that we call \emph{$\ell$-blocks}. We can talk about connected components of $\ell$-blocks, each $\ell$-block $S^{(\ell)}_{i_1,i_2}$ having four neighbors (namely, $S^{(\ell)}_{i_1 \pm 1, i_2}$ and $S^{(\ell)}_{i_1, i_2 \pm 1}$). For any simply connected domain $\Lambda \subseteq V$, we introduce its outer and inner approximations on the $\ell$-grid. For this purpose, we see $\Lambda$ as a subset of $\CC$ by drawing a hexagon with radius $\frac{1}{\sqrt{3}}$ around each vertex $v \in \Lambda$ (as on Figure~\ref{fig:holes}).
\begin{enumerate}[(i)]
\item The \emph{outer approximation} $\Dout{\Lambda}{\ell}$ of $\Lambda$ is the union of all $S^{(\ell)}_{i_1,i_2}$ ($i_1$, $i_2 \in \ZZ$) such that $S^{(\ell)}_{i_1,i_2} \cap \Lambda \neq \emptyset$.

\item Its \emph{inner approximation} (seen from $0$) $\Dint{\Lambda}{\ell}$ is obtained by considering all $\ell$-blocks $S^{(\ell)}_{i_1,i_2} \subseteq \Lambda$, and grouping them into connected components (as explained above): $\Dint{\Lambda}{\ell}$ is the union of all $\ell$-blocks in the connected component containing $S^{(\ell)}_{0,0} = [-\frac{\ell}{2}, \frac{\ell}{2}]^2$. If $S^{(\ell)}_{0,0} \nsubseteq \Lambda$ (we could even have $0 \notin \Lambda$), we set $\Dint{\Lambda}{\ell} = \emptyset$.
\end{enumerate}
Clearly, $\Dint{\Lambda}{\ell} \subseteq \Lambda \subseteq \Dout{\Lambda}{\ell}$.

A bounded, simply connected $\Lambda \subseteq \CC$ is said to be \emph{$(\ell, \eta)$-approximable} (for some $\ell > 0$, $\eta \in (0,1)$) if $|\Dout{\Lambda}{\ell} \setminus \Dint{\Lambda}{\ell} | \leq \eta |\Lambda|$. Note that this assumption implies in particular that $|\Dint{\Lambda}{\ell}| \geq (1 - \eta) |\Lambda|$ and $|\Dout{\Lambda}{\ell}| \leq (1 + \eta) |\Lambda|$. The first regularity property that we are going to use says that $\holes$ can be arbitrarily well approximated on the $(\delta L(p))$-grid, by choosing $\delta > 0$ small enough, uniformly in $p > p_c$ (and with high probability). Its proof is based on the $6$-arm estimate \eqref{eq:6arm}, to rule out the existence of ``macroscopic'' bottlenecks.

\begin{lemma}[Lemma~3.7 in \cite{BKN2018}] \label{lem:approx_hole}
For all $\ve$, $\eta > 0$, there exist $\delta$, $\ol{\delta} > 0$ (depending only on $\ve$, $\eta$) such that: for all $p_c < p < p_c + \ol{\delta}$,
\begin{equation}
\PP_p \big( \holes \text{ is } (\delta L(p), \eta)\text{-approximable} \big) \geq 1 - \ve.
\end{equation}
\end{lemma}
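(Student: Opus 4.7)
The plan is to control the volume of $D := \Dout{\holes}{\delta L(p)} \setminus \Dint{\holes}{\delta L(p)}$ in expectation and conclude with Markov's inequality. Using the a-priori estimate \eqref{eq:holes_apriori}, I would first restrict to the good event $G_0 := \{\Ball_{\ul{\lambda} L(p)} \subseteq \holes \subseteq \Ball_{\ol{\lambda} L(p)}\}$, which has probability at least $1 - \ve/2$ for suitable $\ul{\lambda}, \ol{\lambda} > 0$ depending only on $\ve$. On $G_0$ we have $|\holes| \geq c L(p)^2$ and $D \subseteq \Ball_{\ol{\lambda} L(p)}$, so it suffices to show $\EE_p[|D|\ind_{G_0}] \leq C \delta^{\gamma} L(p)^2$ for some universal $\gamma > 0$.

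Next, I would decompose $D = D_a \cup D_b$, where $D_a$ is the union of $(\delta L(p))$-blocks intersecting both $\holes$ and its complement (since $\holes$ is simply connected, each such block contains a vertex of the vacant circuit $\dout \holes$), and $D_b$ is the union of blocks $S \subseteq \holes$ whose connected component in the graph of blocks $\subseteq \holes$ does not contain $S^{(\delta L(p))}_{0,0}$. For $D_a$, Remark~\ref{rem:arms_holes} shows that each $v \in \dout \holes$ supports a polychromatic 3-arm configuration (two vacant arms along $\dout \holes$, one occupied arm to $\Cinf$), and on $G_0$ all three arms extend to scale of order $L(p)$. Combining \eqref{eq:near_critical_arm} with the polychromatic 3-arm exponent $\alpha = 2/3$ from \eqref{eq:arm_exponent}, one gets $\PP_p(S \cap \dout \holes \neq \emptyset) \leq C \pi_3(\delta L(p), L(p)) = O(\delta^{2/3 - o(1)})$ for each block $S$ inside $\Ball_{\ol{\lambda} L(p)}$ at distance of order $L(p)$ from $0$. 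Summing over the $O(\delta^{-2})$ such blocks and multiplying by $(\delta L(p))^2$ per block yields $\EE_p[|D_a|\ind_{G_0}] \leq C_1 \delta^{2/3-o(1)} L(p)^2$.

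For $D_b$, any block of $D_b$ is separated from $S^{(\delta L(p))}_{0,0}$ by some ``grid-level bottleneck block'' $T$, near which the annulus $\Ann_{\delta L(p), \ul{\lambda} L(p)/2}$ around the center of $T$ supports a polychromatic 6-arm configuration: two occupied arms (two arcs of $\Cinf$ pinching the narrow passage) and four vacant arms (two arcs of $\din \holes$ lying along those $\Cinf$ arcs, plus two arcs following the two pieces of $\holes \setminus T$ continuing past $T$ on either side). The bound \eqref{eq:6arm}, summed over the $O(\delta^{-2})$ possible positions of $T$, shows that the expected number of bottleneck blocks is at most $O(\delta^{\beta})$; since $|D_b| \leq |\holes| \leq C L(p)^2$ pointwise on $G_0$, this gives $\EE_p[|D_b|\ind_{G_0}] \leq C_2 \delta^{\beta} L(p)^2$.

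Combining these bounds yields $\EE_p[|D|\ind_{G_0}] \leq C \delta^{\gamma} L(p)^2$ with $\gamma := \min(2/3, \beta) - o(1) > 0$. Markov's inequality then gives $\PP_p(\{|D| > \eta|\holes|\} \cap G_0) \leq C \delta^{\gamma}/(c\eta) \leq \ve/2$ for $\delta$ small enough in terms of $\ve$ and $\eta$, which completes the proof after a further restriction $p < p_c + \ol{\delta}$ to ensure the a-priori estimate applies. I expect the main subtlety to lie in rigorously extracting the polychromatic 6-arm configuration at a grid-level bottleneck: one must exhibit six genuinely disjoint arms starting near $T$ with the correct color pattern, using that the two pieces of $\holes \setminus T$ each extend to scale of order $L(p)$ because $\Ball_{\ul{\lambda} L(p)} \subseteq \holes$ on $G_0$ and the bottleneck lies outside this inner ball.
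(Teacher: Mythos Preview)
Your overall architecture (restrict to a good event, decompose $D = D_a \cup D_b$, bound each part via arm events, finish with Markov) is exactly the one in the paper's proof. Your treatment of $D_a$ is fine and in fact a bit sharper than needed: the paper only uses a single occupied arm to distance $L(p)$ (from \eqref{eq:1arm}), whereas you invoke the polychromatic $3$-arm event. Either works.

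The handling of $D_b$, however, has a genuine gap. Your argument rests on the claim that at any grid-level bottleneck block $T$, the six-arm event with pattern $(ovvovv)$ holds in the full annulus $\Ann_{\delta L(p),\,\ul{\lambda} L(p)/2}(z_T)$, and you justify this by saying that ``the two pieces of $\holes \setminus T$ each extend to scale of order $L(p)$''. That is false: only the piece containing $0$ is guaranteed to be large (it contains $\Ball_{\ul{\lambda} L(p)}$), while the other piece may have diameter of order $\delta L(p)$. In addition, your description of the four vacant arms is off: arcs of $\din\holes$ and paths inside $\holes$ need not be vacant (vertices of $\holes$ can be occupied as long as they are not in $\Cinf\cup\dout\Cinf$). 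The four vacant arms come entirely from the vacant circuit $\dout\holes$, and the two of them on the ``separated'' side only reach the scale of that separated component. Consequently you cannot bound $\PP(\exists\text{ bottleneck block})$ by $O(\delta^{\beta})$ as claimed.

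The repair is exactly what the paper does: for each vertex $v$ in the separated component, pick a separating block $B$ (via max-flow/min-cut and simple connectedness of $\holes$); then one has a genuine $6$-arm event only in $\Ann_{\delta L(p),\,\ell}(z_B)$ with $\ell = d(v,z_B)$, together with a $3$-arm event (pattern $(ovv)$, bounded via \eqref{eq:1arm}) in $\Ann_{\ell,\,\ul{\lambda} L(p)}(z_B)$. One then sums over pairs $(B,v)$, i.e.\ over blocks $B$ and dyadic scales of $\ell$; the factor $(\delta L(p)/\ell)^{2+\beta}$ from \eqref{eq:6arm} beats the $\sim \ell^2$ entropy of vertices at that scale, and the extra $(\ell/L(p))^{\beta'}$ from the outer $3$-arm annulus makes the whole sum $\leq C\delta^{\beta''} L(p)^2$.
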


For $\lambda > 0$, the \emph{$\lambda$-shrinking} of a domain $\Lambda \subseteq \CC$ is defined as
\begin{equation} \label{eq:def_shrink}
[\Lambda]_{(\lambda)} := \{ z \in \Lambda \: : \: d(z, \CC \setminus \Lambda) \geq \lambda \},
\end{equation}
where $d$ is the distance induced by $\|.\|$. We use this notion only for domains $\Lambda$ obtained as connected components of $\ell$-blocks. In this case, observe that $[\Lambda]_{(\ve \ell)}$ is connected for all $\ve \in (0, \frac{1}{2})$, and that we have $(1 - 4 \ve) |\Lambda| \leq \big| [\Lambda]_{(\ve \ell)} \big| \leq |\Lambda|$ for all $\ve \in (0, \frac{1}{4})$, uniformly in $\ell$. Later, we also use the \emph{$\lambda$-thickening} of a domain $\Lambda \subseteq \CC$, which is
\begin{equation} \label{eq:def_thick}
\big[ \Lambda \big]^{(\lambda)} := \Lambda \cup \{ z \in \CC \setminus \Lambda \: : \: d(z, \Lambda) \leq \lambda \}.
\end{equation}
In the case when $\Lambda$ is a union of $\ell$-blocks, we have: for all $\ve \leq 1$,
\begin{equation} \label{eq:thick_bound}
|\Lambda| \leq \big| \big[ \Lambda \big]^{(\ve \ell)} \big| \leq (1+8 \ve) |\Lambda|.
\end{equation}

With these definitions at hand, we can state a continuity property for $\holes$ which is a key ingredient in \cite{BKN2018}, to compare the strong holes at various times. In this result, $\PP$ refers to the standard coupling of Bernoulli percolation, $\holes(p)$ denoting the strong hole at parameter $p$.

\begin{lemma}[Lemma~3.8 in \cite{BKN2018}] \label{lem:cont_hole}
For all $\ve > 0$, there exist $\delta$, $\ol{\delta} > 0$ (depending only on $\ve$) such that: for all $p_c < p < p' < p_c + \ol{\delta}$ with $\frac{L(p)}{L(p')} \leq 1 + \ol{\delta}$, we have
\begin{equation}
\PP \Big( \Big[ \Dint{\holes(p)}{\delta L(p)} \Big]_{(\ve \delta L(p))} \subseteq \holes(p') \subseteq \holes(p) \Big) \geq 1 - \ve.
\end{equation}
\end{lemma}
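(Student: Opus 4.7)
The plan is to prove the two inclusions separately. The outer inclusion $\holes(p')\subseteq\holes(p)$ holds deterministically in the standard monotone coupling: since $\Cinf(p)\subseteq\Cinf(p')$, each $v\in\dout\Cinf(p)$ is adjacent to $\Cinf(p')$ and therefore lies in $\Cinf(p')$ if it is occupied at time $p'$, or in $\dout\Cinf(p')$ otherwise. Thus $\Cinf(p)\cup\dout\Cinf(p)\subseteq\Cinf(p')\cup\dout\Cinf(p')$, and the connected component of $0$ in the complement can only shrink with $p$.

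For the inner inclusion I would first restrict to a high-probability event $G$ on which, simultaneously: (i) $\holes(p)\subseteq\Ball_{\Lambda L(p)}$ for some $\Lambda=\Lambda(\ve)$, by Lemma~\ref{lem:bound_holes}, and (ii) $\holes(p)$ is $(\delta L(p),\eta)$-approximable for suitably small $\delta,\eta$, by Lemma~\ref{lem:approx_hole}. On $G$, the inner approximation $\Dint{\holes(p)}{\delta L(p)}$ is a union of at most $N_\delta\leq C(\Lambda/\delta)^2$ many $\delta L(p)$-blocks, each entirely contained in $\holes(p)$; in particular the block at the origin belongs to it, so $0$ lies in the shrunken set for $\ve<1/2$. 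The inner inclusion then reduces, via a union bound, to controlling, for every fixed $\delta L(p)$-block $S\subseteq\holes(p)$, the conditional failure probability $\PP\bigl([S]_{(\ve\delta L(p))}\not\subseteq\holes(p')\bigr)$.

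For this per-block estimate, the already-established outer inclusion implies that the failure $[S]_{(\ve\delta L(p))}\not\subseteq\holes(p')$ forces some vertex $v$ deep inside $S$ to enter $\Cinf(p')\cup\dout\Cinf(p')$ between times $p$ and $p'$. Up to a lattice step this yields some $v'\in S$ belonging to $\Cinf(p')\setminus\Cinf(p)$, hence an occupied path at $p'$ from $v'$ to $\dout S$ that is absent at $p$. Any such path must contain a vertex $u\in S\setminus[S]_{(\ve\delta L(p))}$ that flipped from vacant at $p$ to occupied at $p'$, and at $u$ we read off a (possibly defective) four-arm configuration at $p$ up to scale $\ve\delta L(p)$: two occupied arms tracing the new path towards $v'$ and $\dout S$, and two vacant arms provided by the dual paths that prevented the crossing at time $p$. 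Combining the near-critical stability \eqref{eq:near_critical_arm} and quasi-multiplicativity \eqref{eq:quasi_mult} of the four-arm event, the exponent $\alpha_4=\tfrac{5}{4}$, and the scaling relation \eqref{eq:equiv_L} (which yields $p'-p\lesssim\ol\delta\bigl(L(p)^2\pi_4(L(p))\bigr)^{-1}$ under the hypothesis $L(p)/L(p')\leq 1+\ol\delta$), a first-moment bound over the $O(\ve\delta^2 L(p)^2)$ candidate $u$'s inside the annulus $S\setminus[S]_{(\ve\delta L(p))}$ gives
\[\PP\bigl([S]_{(\ve\delta L(p))}\not\subseteq\holes(p')\bigr)\lesssim \ol\delta\cdot\ve\delta^2\cdot\frac{\pi_4(\ve\delta L(p))}{\pi_4(L(p))}\asymp\ol\delta\cdot\ve^{-1/4+o(1)}\delta^{3/4+o(1)}.\]
Summing over the $N_\delta$ blocks produces a total failure probability of order $\ol\delta\cdot\ve^{-1/4+o(1)}\delta^{-5/4+o(1)}$, which can be driven below $\ve/2$ by choosing $\ol\delta$ small after $\ve,\delta,\eta$ have been fixed.

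The step I expect to be the main obstacle is the pivotal bookkeeping at $u$: the hypothesis $S\subseteq\holes(p)$ is a \emph{global} restriction on the $p$-configuration (through the existence of $\Cinf(p)$ far from $S$), whereas the four-arm event we extract is \emph{local} at scale $\ve\delta L(p)\ll L(p)$. Justifying this decoupling, and allowing for possible defects in the four-arm when the new path re-enters $S$ or passes near other flipped sites, will require some combinatorial care; the underlying probabilistic ingredients are, however, the classical tools collected in Section~\ref{sec:near_critical_perc}.
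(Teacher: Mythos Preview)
The paper does not give its own proof of this lemma; it is quoted verbatim from \cite{BKN2018}, and the only comment made here is that ``by definition of the coupling, the second inclusion is obvious, and only the first one requires a proof.'' Your argument for the outer inclusion $\holes(p')\subseteq\holes(p)$ is exactly this observation, so that part is fine.

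For the inner inclusion, your overall strategy (find a flipped site carrying a four-arm event, then sum) is sound and leads to the right numerics, but the localization step is wrong as written. You claim that the $p'$-occupied path from $v'$ to $\dout S$ must contain a flipped vertex $u\in S\setminus[S]_{(\ve\delta L(p))}$. It need not: nothing prevents that path from being $p$-occupied all the way through $S$ (and indeed all the way through $\holes(p)$), with the new connection to $\Cinf(p')$ created by flips occurring outside $S$. Even granting your location for $u$, the four-arm scale you use is inconsistent: a vertex in the shell $S\setminus[S]_{(\ve\delta L(p))}$ is by definition at distance $<\ve\delta L(p)$ from $\dout S$, so the arm ``towards $\dout S$'' does not reach scale $\ve\delta L(p)$.

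The clean fix is to look for the flipped site on $\dout\holes(p)$ rather than in the block shell. Let $w$ be the first vertex of $\gamma$ (the $p'$-occupied path from $v'$ to $\infty$) that lies outside $\holes(p)$. Then $w\in\dout\holes(p)\subseteq\dout\Cinf(p)$, so $w$ is $p$-vacant and hence flipped. In the annulus $\Ann_{1,\,\ve\delta L(p)/2}(w)$ one reads off four disjoint alternating arms: one $p'$-occupied arm along $\gamma$ back into $\holes(p)$ (length $\geq d(v',w)\geq \ve\delta L(p)-O(1)$), one $p$-occupied arm to $\Cinf(p)$ (since $w\in\dout\Cinf(p)$), and two $p$-vacant arms along the circuit $\dout\holes(p)$. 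Now take a union bound over all $w\in\Ball_{\ol\lambda L(p)+1}$ of $\{w\text{ flips}\}\cap\{\text{mixed-}(p,p')\text{ four-arm at }w\text{ to scale }\ve\delta L(p)/2\}$; the flip and the four-arm event (read from neighbors of $w$) are independent, so the conditioning issue you were worried about disappears. The resulting bound is $\lesssim L(p)^2\,(p'-p)\,\pi_4(\ve\delta L(p))\lesssim \ol\delta\,(\ve\delta)^{-5/4+o(1)}$, and one concludes as you indicated by choosing $\ol\delta$ small last.
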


Note that by definition of the coupling, the second inclusion is obvious, and only the first one requires a proof. This result implies directly a continuity result for the volume of $\holes$, see Corollary~3.9 in \cite{BKN2018}.

%\begin{corollary}[Corollary~3.9 in \cite{BKN2018}] \label{cor:cont_vol}
%For all $\ve > 0$, there exists $\delta = \delta(\ve) > 0$ such that: for all $p_c < p < p' < p_c + \delta$ with $\frac{L(p)}{L(p')} \leq 1 + \delta$, we have
%\begin{equation}
%\PP \big( \big| \holes(p') \big| \leq \big| \holes(p) \big| \leq (1 + \ve) \big| \holes(p') \big| \big) \geq 1 - \ve.
%\end{equation}
%\end{corollary}

In \cite{BKN2018}, Lemmas~\ref{lem:approx_hole} and \ref{lem:cont_hole} are in particular used in combination with an estimate on the largest cluster in a suitably regular domain, see Lemma~4.1 in that paper. In Section~\ref{sec:dec_iteration} we state and use a uniform version of such an estimate, see Lemma~\ref{lem:approx_BCKS_unif}.

%\begin{lemma}[Lemma~4.1 in \cite{BKN2018}] \label{lem:approx_largest}
%For all $\ve > 0$ and $C \geq 1$, there exists $\mu = \mu(\ve, C) > 0$ such that the following holds. For all $p > p_c$, all $n \geq 1$ with $\frac{L(p)}{n} \leq \mu$, and all sets $\Lambda$ of one of the two types,
%\begin{itemize}
%\item $\Lambda = [\tilde{\Lambda}]_{(\lambda n)}$, for some $\lambda \in [0,\frac{1}{3}]$ and some domain $\tilde{\Lambda}$ which is a connected component of at most $C$ $n$-blocks containing $S^{(n)}_{0,0}$,
%\item or $\Lambda$ is an $(n,\frac{\ve}{2})$-approximable set with $\Ball_n \subseteq \Lambda \subseteq \Ball_{C n}$,
%\end{itemize}
%the three properties below are satisfied, with probability $> 1 - \ve$:
%\begin{itemize}
%\item[(i)] the largest $p$-occupied cluster $\lclus_{\Lambda}(p)$ in $\Lambda$ satisfies
%$$\bigg| \frac{|\lclus_{\Lambda}(p)|}{\theta(p) |\Lambda|} - 1 \bigg| < \ve,$$
%
%\item[(ii)] this cluster contains a circuit in $\Ann_{\frac{n}{8}, \frac{n}{4}}$ which is connected to $\infty$ by a $p$-occupied path,
%
%\item[(iii)] and all other $p$-occupied clusters in $\Lambda$ have a volume at most $\ve \theta(p) |\Lambda|$.
%\end{itemize}
%\end{lemma}

%Note that property (ii) ensures that the hole around the origin in $\lclus_{\Lambda}(p)$ coincides with $\hole(p)$.

The following strengthened version of Lemma~\ref{lem:approx_hole} will turn out to be useful toward the end of Section~\ref{sec:dec_proof}. This improved result provides a more uniform control on the percolation holes which appear successively around the origin. In order to state it, we use the notations $\dul{p}^{(K)}$, $\dol{p}^{(K)}$ from \eqref{eq:def_pK} ($p > p_c$, $K \geq 1$).

\begin{lemma} \label{lem:approx_hole_unif}
For all $\ve$, $\eta > 0$, and $K \geq 1$, there exist $\delta = \delta(\ve, \eta, K) > 0$ and $\ol{\delta} = \ol{\delta}(\ve, K) > 0$ such that: for all $p_c < p < p_c + \ol{\delta}$,
\begin{equation}
\PP \big( \text{for all } \tilde{p} \in [\dol{p}^{(K)}, \dul{p}^{(K)}], \: \holes(\tilde{p}) \text{ is } (\delta L(p), \eta)\text{-approximable} \big) \geq 1 - \ve.
\end{equation}
\end{lemma}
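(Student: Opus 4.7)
The plan is to bootstrap Lemma~\ref{lem:approx_hole} via a discretization of $[\dol{p}^{(K)}, \dul{p}^{(K)}]$, combined with the continuity result Lemma~\ref{lem:cont_hole} and the monotonicity of $\tilde{p} \mapsto \holes(\tilde{p})$ in the standard coupling of Bernoulli percolation.

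First, I would choose a finite grid $\dol{p}^{(K)} = \tilde{p}_0 < \tilde{p}_1 < \cdots < \tilde{p}_M = \dul{p}^{(K)}$ satisfying $L(\tilde{p}_{j-1})/L(\tilde{p}_j) \leq 1 + \delta^*$ for each $j$, where $\delta^* > 0$ is a parameter to be chosen later. Since $L(\dul{p}^{(K)})/L(\dol{p}^{(K)}) = 1/K^2$, the number of points $M$ depends only on $K$ and $\delta^*$. I would then apply Lemma~\ref{lem:approx_hole} at each $\tilde{p}_j$ with parameters $(\ve/(2M+2), \eta^*)$ for some small $\eta^* > 0$, taking $\delta_0$ small enough (as allowed by the $6$-arm-based proof) so that $\delta_0 L(\tilde{p}_j) \leq \delta L(p)$ for some common target scale $\delta$ and all $j$; and Lemma~\ref{lem:cont_hole} at each consecutive pair $(\tilde{p}_{j-1}, \tilde{p}_j)$ with parameter $\ve_* := \ve/(2M)$. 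By a union bound, on a good event $\mathcal{G}$ of probability at least $1 - \ve$, for each $j$ we have simultaneously: (a) $\holes(\tilde{p}_j)$ is $(\delta L(p), \eta^*)$-approximable; and (b) the sandwich
$$\bigl[\Dint{\holes(\tilde{p}_{j-1})}{\delta_1 L(\tilde{p}_{j-1})}\bigr]_{(\ve_* \delta_1 L(\tilde{p}_{j-1}))} \subseteq \holes(\tilde{p}_j) \subseteq \holes(\tilde{p}_{j-1})$$
holds. Choosing $\delta^*$ small enough guarantees the ratio hypothesis of Lemma~\ref{lem:cont_hole}.

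On $\mathcal{G}$, fix any $\tilde{p} \in [\tilde{p}_{j-1}, \tilde{p}_j]$. Monotonicity of the coupling yields $\holes(\tilde{p}_j) \subseteq \holes(\tilde{p}) \subseteq \holes(\tilde{p}_{j-1})$, and monotonicity of the inner and outer approximations in the enclosing set then gives $\Dout{\holes(\tilde{p})}{\delta L(p)} \subseteq \Dout{\holes(\tilde{p}_{j-1})}{\delta L(p)}$ and $\Dint{\holes(\tilde{p})}{\delta L(p)} \supseteq \Dint{\holes(\tilde{p}_j)}{\delta L(p)}$. The symmetric difference is thus bounded by the sum of three contributions: (i) the approximation deficit of $\holes(\tilde{p}_{j-1})$ on the $(\delta L(p))$-grid, (ii) the same for $\holes(\tilde{p}_j)$, and (iii) the volume of the shell $\holes(\tilde{p}_{j-1}) \setminus \holes(\tilde{p}_j)$. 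Contributions (i)--(ii) are each at most $\eta^*$ times the corresponding volume by (a); contribution (iii) is, by (b), bounded by $|\holes(\tilde{p}_{j-1}) \setminus [\Dint{\holes(\tilde{p}_{j-1})}{\delta_1 L(\tilde{p}_{j-1})}]_{(\ve_* \delta_1 L(\tilde{p}_{j-1}))}|$, which by (a) combined with the elementary bounds on shrinking is $O((\eta^* + \ve_*)|\holes(\tilde{p}_{j-1})|)$. Tuning $\eta^*$, $\ve_*$, and $\delta^*$ small as functions of $\eta$ and $K$ yields $(\delta L(p), \eta)$-approximability of $\holes(\tilde{p})$, as required.

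The main obstacle is this interpolation step: translating \emph{pointwise} approximability at each $\tilde{p}_j$, which Lemma~\ref{lem:approx_hole} naturally produces on a grid tied to $L(\tilde{p}_j)$, into \emph{uniform} approximability on a single common grid $\delta L(p)$ for all $\tilde{p}$ in the interval. Since $L(\tilde{p}_j)$ varies within a factor $K$ of $L(p)$, making sure the approximability constants do not deteriorate when passing to a finer common grid requires revisiting the $6$-arm-based argument behind Lemma~\ref{lem:approx_hole}. The key geometric input that prevents the creation of new bottlenecks at intermediate $\tilde{p}$ is the sandwich from Lemma~\ref{lem:cont_hole}: it constrains the shell $\holes(\tilde{p}_{j-1}) \setminus \holes(\tilde{p}_j)$ to a thin boundary layer of transverse thickness $O(\ve_* \delta_1 L(\tilde{p}_{j-1}))$, which can be made much smaller than the grid scale $\delta L(p)$ by choosing $\ve_*$ small, so no new disconnection of the inner approximation occurs at any $\tilde{p}$ in the interval.
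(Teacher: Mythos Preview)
Your discretize-then-interpolate idea is natural, but as written it has a genuine circular dependency among the parameters that you have not broken. You set $\ve_*:=\ve/(2M)$ so that the union bound over the $M$ applications of Lemma~\ref{lem:cont_hole} costs at most $\ve/2$; but Lemma~\ref{lem:cont_hole}, as stated, ties its probability error, its shrinking parameter, and its ratio threshold $\ol{\delta}$ to a \emph{single} input $\ve$. Hence the fineness $\delta^*$ you need on the grid (namely $\delta^*\le\ol{\delta}(\ve_*)$) depends on $\ve_*$, while $M\asymp\log(K^2)/\delta^*$ depends on $\delta^*$, and $\ve_*$ depends on $M$: a loop. Phrased differently, to make the shell $\holes(\tilde p_{j-1})\setminus\holes(\tilde p_j)$ of relative volume $O(\eta)$ you are forced to feed Lemma~\ref{lem:cont_hole} a parameter of order $\eta$; each application then loses probability of order $\eta$, and the number $M$ of applications is fixed by $K$ and $\ol\delta(\eta)$ --- so the total probability loss is a constant in $(\eta,K)$ and cannot be pushed below an arbitrary $\ve$. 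Resolving this would require decoupling the probability error from the geometric error in Lemma~\ref{lem:cont_hole}, i.e.\ reopening its proof. You also already concede that passing from approximability on the native grids $\delta_0 L(\tilde p_j)$ to a common grid $\delta L(p)$ ``requires revisiting the $6$-arm-based argument''; once you do that, the bootstrap no longer saves work.

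The paper avoids both difficulties by redoing the $6$-arm argument behind Lemma~\ref{lem:approx_hole} in a way that is uniform in $\tilde p$ from the outset. The observation is that the arm configurations witnessing a $\delta L(p)$-block as ``bad'' for $\holes(\tilde p)$ --- a single occupied arm for boundary blocks, and a $6$-arm/$3$-arm configuration at the bottleneck for disconnected interior blocks --- can be replaced by \emph{mixed-parameter} arm events: require occupied arms at $\dul{p}^{(K)}$ and vacant arms at $\dol{p}^{(K)}$. Since a $\tilde p$-occupied arm is a fortiori $\dul{p}^{(K)}$-occupied and a $\tilde p$-vacant arm is $\dol{p}^{(K)}$-vacant, these enlarged events dominate the union over $\tilde p$ while no longer depending on $\tilde p$; their probabilities are still comparable to the critical ones by near-critical stability (because $L(\dul{p}^{(K)})\asymp L(\dol{p}^{(K)})\asymp L(p)$). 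One then bounds $\EE[|\ol\Lambda_1|]$ and $\EE[|\ol\Lambda_2|]$, where $\ol\Lambda_i=\bigcup_{\tilde p}\Lambda_i(\tilde p)$, directly by these mixed arm probabilities and concludes with a single Markov inequality. This handles the whole interval $[\dol{p}^{(K)},\dul{p}^{(K)}]$ at once, with no discretization and no interpolation.
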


The proof of Lemma~\ref{lem:approx_hole_unif} follows roughly the same lines as the proof of Lemma~\ref{lem:approx_hole}. However, some non-trivial modifications are required, so we decided to provide a self-contained proof in Appendix.

\subsection{Comments on some geometric issues} \label{sec:geom_discussion}

Although this section is not needed in the proofs of our main results, it is meant to illustrate some geometric difficulties pertaining to weak holes. As we mentioned in Section~\ref{sec:def_holes}, intuitively, the weak holes seem to appear more naturally than the strong holes in the analysis of forest fires (as well as of frozen percolation with modified boundary rules, on which we briefly comment in Section~\ref{sec:FP_modified}).

The weak hole is not as well-behaved as the strong hole: macroscopic ``dangling ends'' may exist, so that contrary to the strong hole $\holes$, the weak hole $\holew$ is, in general, \emph{not} approximable. For example, using a construction similar in spirit to that of Figure~4 in \cite{BN2017b} (based on Lemma~2.4 in that paper, combined with \eqref{eq:RSW}), one could prove the following. There exists $c > 0$ small enough such that for all $\delta > 0$, for all $p > p_c$ sufficiently close to $p_c$, we have: with probability at least $c$, there is a region of the weak hole which has a volume at least $L(p)^2$, and which is separated from $0$ by at least $L(p)^{\frac{3}{4} - \delta}$ bottleneck sites. We provide a sketch of proof on Figure~\ref{fig:non_approximable}, but we do not give all details since this result is not used later in the paper.

\begin{figure}[t]
\begin{center}

\includegraphics[width=.85\textwidth]{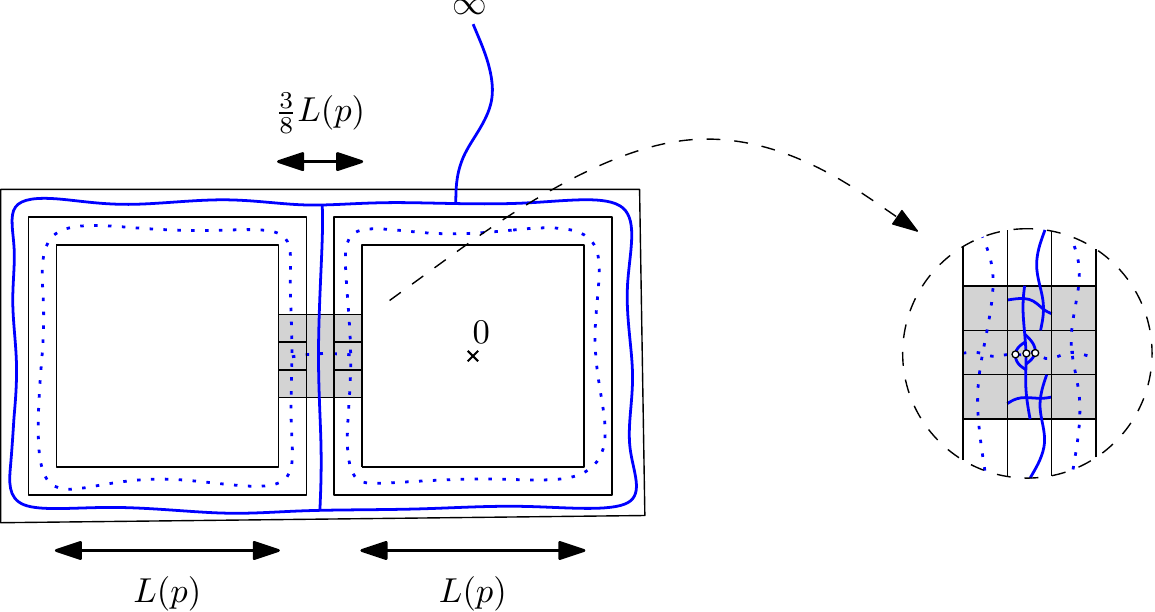}
\caption{\label{fig:non_approximable} The vacant paths are $p$-occupied, while the dotted ones are $p$-vacant. The grey region contains many (of order $L(p)^2 \pi_4(L(p)) = L(p)^{\frac{3}{4} + o(1)}$ as $p \searrow p_c$) bottleneck sites, each one separating the two ``chambers'', which both have a side length roughly $L(p)$.}

\end{center}
\end{figure}

\section{Frozen percolation and forest fires} \label{sec:FP_FF}

We now turn to the forest fire process studied in this paper. In Section~\ref{sec:processes}, we define it precisely, as well the volume-frozen percolation process to which it is compared later. We then introduce in Section~\ref{sec:map_next} the transformation $\next_N$, which will allow us to follow the successive burnings inside a finite domain.

\subsection{Definition of the processes} \label{sec:processes}

Consider a graph $G = (V_G, E_G)$, which is either the whole triangular lattice $\TT = (V_{\TT}, E_{\TT})$, or a finite simply connected subgraph of it. We consider processes on $G$, of the form $\eta = (\eta(t))_{t \geq 0}$, where for each $t \geq 0$, $\eta(t) = (\eta_v(t))_{v \in V_G} \in \{0, 1\}^{V_G}$ ($=: \Omega_G$) is a vertex configuration. For each of them, all vertices $v \in V_G$ are vacant at time $t = 0$ (i.e. $\eta_v(0) = 0$), and they can then become occupied at later times $t$ (that is, $\eta_v(t) = 1$). The occupied cluster of $v \in V_G$ at a time $t \geq 0$, i.e. in the configuration $\eta(t)$, is denoted by $\cluster_t(v)$.

Our processes will be compared to the pure birth process, for which each $v \in V_G$ becomes occupied at rate $1$, independently of the other vertices, and then simply remains in this state. Clearly, at each $t \geq 0$, the configuration $\eta(t)$ is distributed as Bernoulli site percolation with parameter
\begin{equation} \label{eq:relation_p_t}
p(t) := 1 - e^{-t}.
\end{equation}
We let
\begin{equation}
t_c := - \log(1 - p_c) = \log 2
\end{equation}
be the \emph{critical time} such that $p(t_c) = p_c = \frac{1}{2}$, after which an infinite occupied cluster arises.

In the following, we write (with a slight abuse of notation) $\PP_t = \PP_{p(t)}$ when referring to the pure birth process, and we use the shorthand notations $\theta(t) = \theta(p(t))$ and $L(t) = L(p(t))$ (for all $t \in [0,\infty)$). We also set $\theta(\infty) = \lim_{t \to \infty} \theta(t) = 1$ and $L(\infty) = \lim_{t \to \infty} L(t) = 0$.

\subsubsection*{Volume-frozen percolation}

First, volume-frozen percolation with parameter $N \geq 1$ is obtained by modifying the dynamics of the pure birth process in the following way. Again, we let each vertex $v \in V_G$ become occupied at rate $1$, but now only if none of its neighbors belongs to an occupied cluster with a volume $\geq N$, i.e. containing at least $N$ vertices. Otherwise, $v$ is kept vacant.

In other words, each occupied cluster $\cluster$ keeps ``absorbing'' new vertices until $|\cluster| \geq N$. Note that the latter may never happen, but if it does, the growth of $\cluster$ is stopped, and we say that this cluster \emph{freezes}. Its vertices, which are all occupied, are called \emph{frozen}. The vertices along its outer boundary $\dout \cluster$ are vacant, and they then remain in this state forever. Recall that we write $\PPP_N$ for the corresponding probability measure (or $\PPP_N^{(G)}$ when the graph that we use needs to be specified).

\subsubsection*{$N$-parameter forest fire process}

Next, we introduce the forest fire process which is the main focus of the present paper. It is defined in a similar way as volume-frozen percolation. But when a cluster reaches volume $N$, its vertices instantaneously become vacant. Such vertices are then allowed to ``recover'', i.e. become occupied again, at later times (still, at rate $1$). We use the notation $\PP_N^{(G)}$ (or simply $\PP_N$) for the probability measure governing this process.

\subsubsection*{Comments}

Observe that each of the two processes introduced above can be represented as a finite-range interacting particle system. Indeed, every vertex $v$ interacts only with the vertices in $\Ball_N(v)$. These processes can thus be constructed thanks to the general theory of interacting particle systems (we refer the reader to  \cite{Li2005} for a classical reference). In the remainder of the paper, all the processes studied are considered to be coupled by using the same family of Poisson processes $(\varsigma_v(t))_{t \in [0,\infty)}$, $v \in V_{\TT}$, of births (each with intensity $1$). We denote by $(\tau_v^i)_{i \geq 1}$, $v \in V_{\TT}$, the corresponding times (in increasing order).

\subsection{Successive burnings} \label{sec:map_next}

We now introduce a transformation $\next_N$ which will be useful to describe the $N$-parameter forest-fire process (and volume-frozen percolation with the same parameter $N$). For this purpose, we let
$$c_{\TT} := \frac{2}{\sqrt{3}}$$
be such that $|\Ball_n| \sim c_{\TT} \, n^2$ as $n \to \infty$. Roughly speaking, this map is used to predict (approximately) when the first burning occurs in a finite domain, in terms of its diameter. It was already considered, up to some minor changes, in our earlier work \cite{BKN2018} (together with Kiss).

Let $N \geq 1$. The function $\theta$ is continuous and strictly increasing on $[t_c,\infty)$, so it is a bijection from $[t_c,\infty)$ to $[0,1)$. In particular, this implies that for all $R > c_{\TT}^{-\frac{1}{2}} \sqrt{N}$, there exists a unique $t \in (t_c,\infty)$ such that
\begin{equation}
c_{\TT} R^2 \theta(t) = N.
\end{equation}
We denote this value by $\next_N(R)$. We also define the transformation $t \mapsto \hat{t} = \hat{t}(t,N) > t_c$ by $\hat{t} = \next_N(L(t))$ (for values $t > t_c$ such that $L(t) > c_{\TT}^{-\frac{1}{2}} \sqrt{N}$). It satisfies
\begin{equation} \label{eq:t_that}
c_{\TT} L(t)^2 \theta(\hat{t}) = N.
\end{equation}
Note that the map $\next_N$ is strictly decreasing, while $t \mapsto \hat{t}$ is strictly increasing. Moreover, $\hat{t} \to t_c$ as $t \searrow t_c$, and $\hat{t} \to \infty$ as $t \nearrow L^{-1}(c_{\TT}^{-\frac{1}{2}} \sqrt{N})$ (where $L^{-1}$ denotes the inverse function of $L$, seen as a function of time, on $(t_c, \infty)$). Hence, $t \mapsto \hat{t}$ is a bijection from $(t_c,L^{-1}(c_{\TT}^{-\frac{1}{2}} \sqrt{N}))$ to $(t_c, \infty)$. For convenience, we set $\next_N(R) = \infty$ for $R \in [0, c_{\TT}^{-\frac{1}{2}} \sqrt{N}]$, and $\hat{t} = \infty$ for $t \geq L^{-1}(c_{\TT}^{-\frac{1}{2}} \sqrt{N})$.

From \eqref{eq:equiv_theta}, \eqref{eq:equiv_L}, and the explicit values for the exponent $\alpha_{\sigma}$ appearing in \eqref{eq:arm_exponent} (in the particular cases $\sigma = (o)$ and $\sigma = (ovov)$), we can see that
\begin{equation} \label{eq:exp_theta_L}
\theta(t) = (t-t_c)^{\frac{5}{36} + o(1)} \: (t \searrow t_c) \quad \text{and} \quad L(t) = |t-t_c|^{-\frac{4}{3} + o(1)} \: (t \to t_c).
\end{equation}
In particular, $L(t)^2 \theta(t) \to \infty$ as $t \searrow t_c$, while clearly $L(t)^2 \theta(t) \to 0$ as $t \to \infty$. We deduce that the equation $\hat{t} = t$ has at least one solution, which leads us to introduce
\begin{equation} \label{eq:def_t_inf}
t_{\infty}(N) := \sup \{ t > t_c \: : \: \hat{t} = t \} \in (t_c, L^{-1}(c_{\TT}^{-\frac{1}{2}} \sqrt{N})),
\end{equation}
and then $m_{\infty}(N) := L(t_{\infty}(N))$. From \eqref{eq:t_that} and \eqref{eq:exp_theta_L}, we have
\begin{equation} \label{eq:exp_t_m_inf}
t_{\infty}(N) = t_c + N^{- \frac{36}{91} + o(1)} \quad \text{and} \quad m_{\infty}(N) = N^{\frac{48}{91} + o(1)} \quad \text{as $N \to \infty$.}
\end{equation}
Furthermore, it follows immediately from the definition of $t_{\infty}$ that: for all $t > t_{\infty}$, $\hat{t} > t$.

The following result allows one to estimate one iteration of the transformation $\next_N$ (see Lemma~3.1 in \cite{LN2021}).

\begin{lemma}
For all $\ve > 0$, there exist $0 < C_1 < C_2$ (depending on $\ve$) such that: for all $N \geq 1$ and $2 \sqrt{N} \leq r \leq R$,
\begin{equation}
C_1 \bigg( \frac{r}{R} \bigg)^{\frac{96}{5} + \ve} \leq \frac{L(\next_N(r))}{L(\next_N(R))} \leq C_2 \bigg( \frac{r}{R} \bigg)^{\frac{96}{5} - \ve}.
\end{equation}
\end{lemma}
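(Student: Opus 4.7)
The argument combines the defining identity of $\next_N$ with the near-critical relation $\theta \asymp \pi_1(L)$ and the one-arm exponent $\frac{5}{48}$, producing the exponent $\frac{96}{5} = 2 \cdot \frac{48}{5}$ as twice the reciprocal of the arm exponent.

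Set $\ell_r := L(\next_N(r))$ and $\ell_R := L(\next_N(R))$. From the defining equation \eqref{eq:t_that} (applicable since $r \geq 2\sqrt{N} > c_\TT^{-1/2}\sqrt{N}$) one has the exact identity
$$\frac{\theta(\next_N(r))}{\theta(\next_N(R))} = \bigg(\frac{R}{r}\bigg)^2,$$
and hence, by \eqref{eq:equiv_theta},
$$\frac{\pi_1(\ell_r)}{\pi_1(\ell_R)} \asymp \bigg(\frac{R}{r}\bigg)^2.$$
If one could write $\pi_1(n) = C n^{-5/48 \pm \ve'}$ uniformly in $n$, this would immediately yield $\ell_r/\ell_R \asymp (r/R)^{96/5 \pm O(\ve')}$; since the only input allowing such a conversion is the arm-exponent asymptotic \eqref{eq:arm_exponent} (valid only as $n \to \infty$), the bulk of the work is producing a uniform version.

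I would handle uniformity by splitting into two regimes. \emph{Bulk regime:} when $\ell_r$ exceeds a large threshold $\Lambda = \Lambda(\ve)$, so does $\ell_R$ (since $\next_N$ is decreasing and $L$ is decreasing on $(t_c, \infty)$, we have $\ell_r \leq \ell_R$). The arm exponent then gives $\theta(p) = L(p)^{-5/48 + O(\ve')}$ for $L(p) \geq \Lambda$, with $\ve' = \ve'(\Lambda, \ve)$ small; inserting this into the displayed identity produces the claim, the residual polynomial mismatch being absorbed by taking $\ve'$ small enough in terms of $\ve$ (and using the a priori bound $\ell_R \leq (\text{const}) (R/r)^{O(1)}$ to control the remaining factor). \emph{Boundary regime:} when $\ell_r < \Lambda$, the defining equation forces $\theta(\next_N(r)) = N/(c_\TT r^2)$ to be bounded below, hence $r \leq C(\Lambda)\sqrt{N}$. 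Either $R \leq 2 C \sqrt{N}$ also, in which case $r \asymp R$ and the bound is trivial with suitable constants; or $R \geq 2 C \sqrt{N}$, in which case only $\ell_R$ need be analyzed asymptotically: from $\theta(\next_N(R)) = N/(c_\TT R^2) \asymp \ell_R^{-5/48 + o(1)}$ we read $\ell_R \asymp (R/\sqrt{N})^{96/5 + o(1)} \asymp (R/r)^{96/5 + o(1)}$, and since $\ell_r \in [\ell_-, \ell_+]$ is bounded, $\ell_r/\ell_R \asymp 1/\ell_R$ yields the claim.

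The main obstacle is thus the uniformity of the conversion from $\pi_1(n) = n^{-5/48 + o(1)}$ into a two-sided polynomial bound valid across all $n \geq 1$; the two-regime split described above resolves this, with essentially all of the substantive calculation concentrated in the bulk regime. No ingredients beyond the near-critical formulas of Section~\ref{sec:near_critical_perc} are required.
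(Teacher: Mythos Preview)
The paper does not prove this lemma at all; it simply cites Lemma~3.1 of \cite{LN2021}. So there is no ``paper's proof'' to compare against. Your overall strategy---use the defining identity $c_\TT r^2 \theta(\next_N(r)) = N$ to get $\theta(\next_N(r))/\theta(\next_N(R)) = (R/r)^2$, then convert via $\theta \asymp \pi_1(L)$ and the one-arm exponent---is the natural one and is almost certainly what \cite{LN2021} does. However, your execution has a real gap.

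The parenthetical claim ``$\ell_R \leq (\text{const})(R/r)^{O(1)}$'' is false. Take $r = R$ with $R/\sqrt{N}$ large: then $R/r = 1$, yet $\theta(\next_N(R)) = N/(c_\TT R^2)$ is small, so $\ell_R = L(\next_N(R))$ is large (indeed $\ell_R \asymp (R/\sqrt{N})^{96/5 + o(1)}$). More generally, $\ell_R$ depends on $R/\sqrt{N}$, not on $R/r$, and these are unrelated. This matters because the only input you have is $\pi_1(n) = n^{-5/48 + o(1)}$ as $n \to \infty$, which via quasi-multiplicativity gives
\[
\pi_1(\ell_r,\ell_R) \asymp \frac{\pi_1(\ell_R)}{\pi_1(\ell_r)}
\quad\text{with}\quad
\bigl|\log \pi_1(n) + \tfrac{5}{48}\log n\bigr| \leq \ve' \log n \ \ (n \geq \Lambda).
\]
The resulting error in $\log(\ell_r/\ell_R)$ is of order $\ve'(\log \ell_r + \log \ell_R)$, i.e.\ controlled by $\log(\ell_r \ell_R)$, \emph{not} by $\log(\ell_R/\ell_r)$. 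Since $\ell_r$ and $\ell_R$ can both be arbitrarily large for fixed $r/R$, this error cannot be absorbed into constants depending only on $\ve$. Abstractly, a function satisfying $\pi(n) = n^{-\alpha + o(1)}$, quasi-multiplicativity, and the a~priori bounds \eqref{eq:1arm} need \emph{not} satisfy $\pi(n_1,n_2) = (n_1/n_2)^{\alpha + o(1)}$ uniformly in $n_1$; one can build counterexamples where $\pi(n,Kn)$ fails to approach $K^{-\alpha}$.

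To close the argument you need the genuinely stronger input that, on $\TT$, the two-scale one-arm probability satisfies $\pi_1(n_1,n_2) = (n_1/n_2)^{5/48 + o(1)}$ as $n_2/n_1 \to \infty$, uniformly in $n_1 \geq 1$ (equivalently, that $\pi_1(n,Kn) \to K^{-5/48}$ for each fixed $K$). This does hold and is what references such as \cite{LN2021} rely on, but it is not a consequence of \eqref{eq:arm_exponent} alone. Once you have this uniform two-scale exponent, your bulk/boundary split works cleanly and the proof goes through essentially as you wrote it.
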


Using $L(\next_N(m_{\infty}(N))) = L(t_{\infty}(N)) = m_{\infty}(N)$, it implies in particular: for all $2 \sqrt{N} \leq r \leq m_{\infty}(N)$,
\begin{equation} \label{eq:ratio_r_minf}
C_1 \bigg( \frac{r}{m_{\infty}(N)} \bigg)^{\frac{96}{5} + \ve} \leq \frac{L(\next_N(r))}{m_{\infty}(N)} \leq C_2 \bigg( \frac{r}{m_{\infty}(N)} \bigg)^{\frac{96}{5} - \ve}.
\end{equation}

We also define $t_k = t_k(N)$ for each $k \geq 0$ by induction as follows, using the earlier observation that the map $t \mapsto \hat{t}$ is a bijection (for every fixed $N \geq 1$). We let
\begin{equation} \label{eq:def_tk}
t_0(N) := 2 t_c, \: \text{ and for each } k \geq 0, \: t_{k+1}(N) \text{ s.t. } \widehat{t_{k+1}(N)} = t_k(N).
\end{equation}
Note that for all $k \geq 0$ and $N \geq 1$, $t_k(N) \geq t_{\infty}(N)$ and $t_{k+1}(N) \leq t_k(N)$. In addition, we denote
\begin{equation} \label{eq:def_mk}
m_k(N) := L(t_k(N)),
\end{equation}
and we have $m_k(N) \leq m_{\infty}(N)$ and $m_{k+1}(N) \geq m_k(N)$ ($k \geq 0$, $N \geq 1$). These are (up to minor modifications) the \emph{exceptional scales} uncovered in \cite{BN2017a}, and one can easily obtain from \eqref{eq:exp_theta_L} that each of them follows a power law in $N$: $m_k(N) = N^{\delta_k + o(1)}$ as $N \to \infty$, for some explicit sequence of exponents $(\delta_k)_{k \geq 0}$ which is strictly increasing, with $\delta_0 = 0$, $\delta_1 = \frac{1}{2}$, and $\delta_k \nearrow \delta_{\infty} = \frac{48}{91}$ (the exponent appearing in \eqref{eq:exp_t_m_inf}) as $k \to \infty$.

\section{Positive recovery time for forest fires} \label{sec:positive_rec_time}

In this section, we revisit the reasonings in \cite{KMS2015}. First, we establish in Section~\ref{sec:stab_6arms} a stability result, Theorem~\ref{thm:modif_6_arms}, for some peculiar six-arm events. These events incorporate the potential presence of ``passage sites'', which roughly correspond to recovered vertices in the forest fire process, i.e. occupied vertices which have burnt, and become again occupied not so long after. Such events play a central role (directly and indirectly) in the remainder of the paper. We then explain in Section~\ref{sec:appl_6arms} that a key crossing estimate from \cite{KMS2015} can be deduced easily from Theorem~\ref{thm:modif_6_arms}, which has several classical and important consequences. Finally, in Section~\ref{sec:gen_6arms}, we present generalizations of the earlier results which will be useful later to study forest fires.

\subsection{Stability for six-arm events with passage sites} \label{sec:stab_6arms}

We now prove a stability result for a modified $6$-arm event, denoted by $\marm_6$ below. Our motivation to introduce such a somewhat exotic event is twofold. First, it makes the summation argument in \cite{KMS2015} more transparent in our opinion, which allows us later to consider adaptations which are more suitable to our setting when successive circuits surrounding the origin burn. Moreover, and maybe more importantly, we believe that this strategy of proof can be adapted to the case of forest fires processes with Poisson ignition, i.e. the original Drossel-Schwabl model, which will be the focus of a separate work.

For an annulus $A = \Ann_{n_1,n_2}(z)$ with $0 \leq n_1 < n_2$ and $z \in \CC$, recall that $\arm_6(A)$ denotes the $6$-arm event in $A$ where the arms have alternating types, i.e. prescribed by $(ovovov)$. From now on, we rather consider the $6$-arm event $\arm_{\sigma}(A)$ corresponding to $\sigma = (ovvovv)$, which has, at $p = p_c$, a probability (uniformly) of order $\pi_6(n_1, n_2)$, by color switching. In addition, as in \cite{KMS2015}, we introduce a modification of $\arm_{\sigma}(A)$, denoted by $\darm_6(A)$, where one of the two occupied arms has possibly one defect with size at most $2$, i.e. this arm is allowed to contain either one single vacant vertex, or two successive vacant vertices. We will use that
\begin{equation} \label{eq:arm_defect}
\PP_{p_c} \big( \darm_6(A) \big) \leq C \bigg( 1 + \log \bigg( \frac{n_2}{n_1 \vee 1} \bigg) \bigg) \pi_6(n_1, n_2)
\end{equation}
for some universal constant $C$ (this is an almost immediate consequence of \eqref{eq:extendability} and \eqref{eq:quasi_mult}, see Proposition~18 in \cite{No2008}).

\begin{figure}[t]
\begin{center}

\includegraphics[width=.75\textwidth]{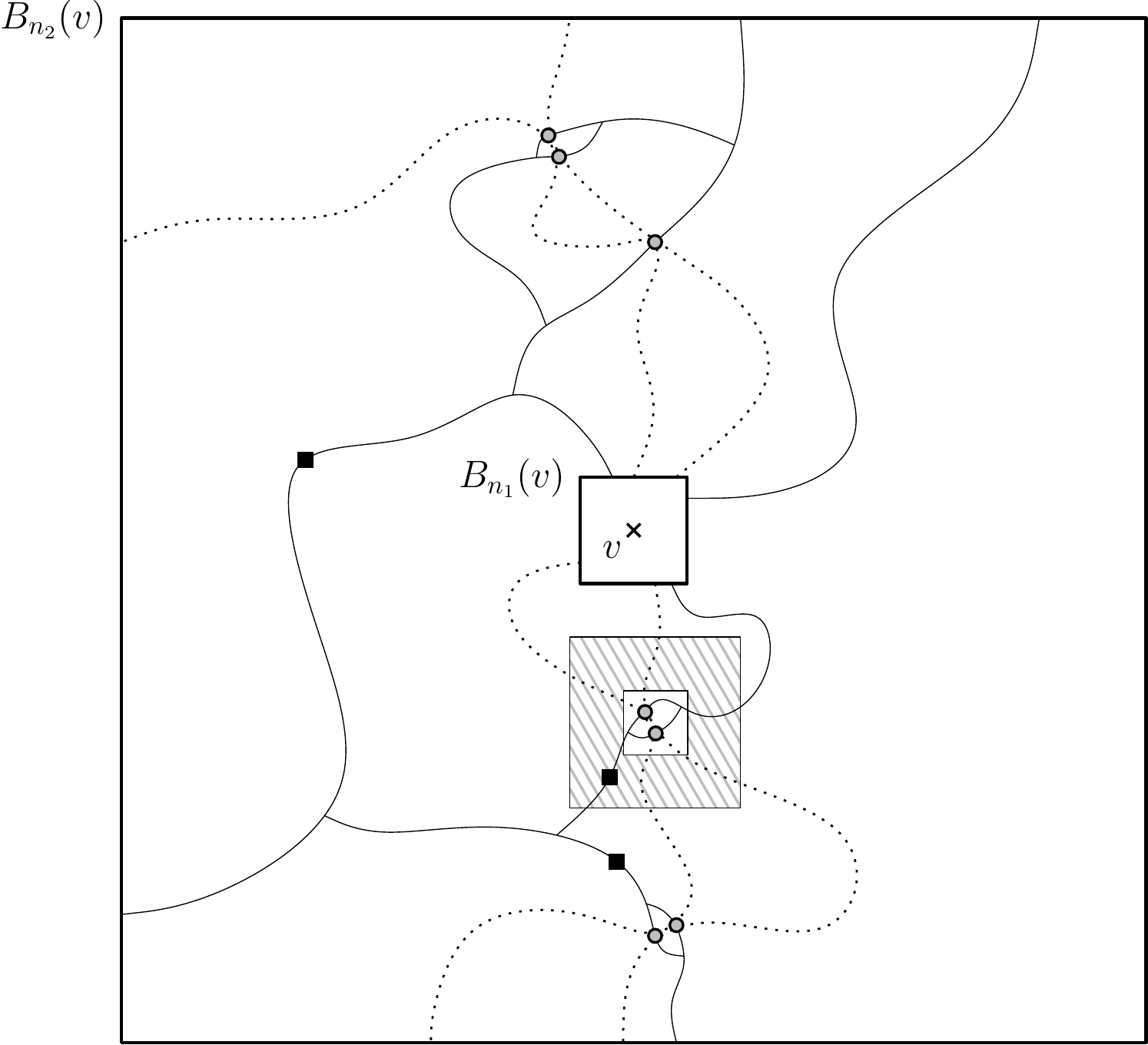}
\caption{\label{fig:six_arm_event} This figure shows a rough depiction of the $6$-arm event $\marm_6(A)$ introduced in this section, for the annulus $A = \Ann_{n_1, n_2}(v)$. Heuristically, we think of the vertices of $\calP$, marked with grey disks, as ``passage sites'', in the spirit of \cite{KMS2015}. The paths in solid and dotted line are respectively occupied and vacant, the black squares indicating defects (with size at most $2$) along the occupied arms. The annulus shaded in grey is $\Ann_{m_1, m_2}(w) \subseteq A$: it satisfies $\Ball_{m_1}(w) \cap \calP \neq \emptyset$ and $\Ann_{m_1, m_2}(w) \cap \calP = \emptyset$. In this annulus, the $6$-arm event with a defect $\darm_6(\Ann_{m_1, m_2}(w))$ should occur.}

\end{center}
\end{figure}

We now introduce a further modification of the $6$-arm event, inspired by Lemma~16(i) in \cite{KMS2015} (which is a deterministic result). We illustrate it on Figure~\ref{fig:six_arm_event}.

\begin{definition} \label{def:6_arms_modified}
Let $0 \leq n_1 < n_2$, $v \in V$, and $A := \Ann_{n_1, n_2}(v)$. The modified $6$-arm event $\marm_6(A)$ is the set of all pairs $(\omega, \sigma) \in \{0,1\}^{\Ball_{n_2}(v)} \times \{0,1\}^{\Ball_{n_2}(v)}$ such that there exists a (possibly empty) set of vertices $\calP \subseteq A$ for which the two properties below hold.
\begin{enumerate}[(i)]
\item For all $w \in \calP$, $\sigma_w = 1$.

\item For all annuli $\Ann_{m_1, m_2}(w) \subseteq A$, with $w \in \Ball_{n_2}(v)$ and $0 \leq m_1 < m_2$, such that
\begin{itemize}
\item $\Ball_{m_1}(w) \supseteq \Ball_{n_1}(v)$ or $\Ball_{m_1}(w) \cap \calP \neq \emptyset$,

\item and $\Ann_{m_1, m_2}(w) \cap \calP = \emptyset$,
\end{itemize}
we have that $\omega \in \darm_6(\Ann_{m_1, m_2}(w))$.
\end{enumerate}
\end{definition}

\begin{remark} \label{rem:6_arms_modified}
We will make use later of the following observations about the above definition, still denoting $A = \Ann_{n_1, n_2}(v)$ for some $0 \leq n_1 < n_2$ and $v \in V$.
\begin{enumerate}
\item Clearly, $\marm_6(A) \supseteq \darm_6(A)$ (by considering the particular case $\calP = \emptyset$). \label{rem:6_arms_modified1}

\item The event $\marm_6(A)$ depends only on the $\omega$- and $\sigma$-configurations in $A$ (but not on those in $\Ball_{n_1}(v)$). \label{rem:6_arms_modified2}

\item If $\marm_6(A)$ holds with respect to a certain set $\calP$, then $\marm_6(\Ann_{m_1, m_2}(w))$ holds for every $w \in \calP$ with $\Ann_{m_1, m_2}(w) \subseteq A$. \label{rem:6_arms_modified3}

\item We have $\marm_6(A) \subseteq \marm_6(\Ann_{m_1, m_2}(v))$ for all $n_1 \leq m_1 < m_2 \leq n_2$. \label{rem:6_arms_modified4}
\end{enumerate}
\end{remark}

In the following, we assume that $\sigma = (\sigma_v)_{v \in V} \sim \PP_{\deltaKMS}$ for some $\deltaKMS > 0$, and that it is independent of $\omega$ ($\sim \PP_p$). We write $\PP_{p,\deltaKMS}$ for the associated probability distribution. As noted above, $\marm_6(\Ann_{n_1, n_2}(v))$ contains $\darm_6(\Ann_{n_1, n_2}(v))$, and the following key estimate shows that it is possible to choose $\deltaKMS > 0$ small enough (independently of all parameters) so that the probabilities of these two events remain comparable.

\begin{theorem} \label{thm:modif_6_arms}
There exist universal constants $\deltaKMS > 0$ and $C > 0$ such that the following holds. For all $0 \leq n_1 < n_2$, and $v \in V$,
\begin{equation} \label{eq:modif_6_arms}
\PP_{p_c,\deltaKMS} \big( \marm_6(\Ann_{n_1, n_2}(v)) \big) \leq C \bigg( 1 + \log \bigg( \frac{n_2}{n_1 \vee 1} \bigg) \bigg) \pi_6(n_1, n_2).
\end{equation}
\end{theorem}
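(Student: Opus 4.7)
The plan is to bound the probability by direct summation over the (existentially quantified) set $\calP$ of passage sites, following the spirit of the key summation estimate (Proposition~15) in \cite{KMS2015}. For each realization of $(\omega, \sigma)$ satisfying $\marm_6(\Ann_{n_1, n_2}(v))$, I fix a minimal witness $\calP(\omega, \sigma)$, and use the independence of $\sigma$ from $\omega$ to bound
\begin{equation*}
\PP(\marm_6) \leq \sum_{k \geq 0} \sum_{\calP \subseteq A,\, |\calP| = k}\deltaKMS^{k}\,\PP\bigl(\text{the }\darm_6\text{ requirements of Definition~\ref{def:6_arms_modified}(ii) hold}\bigr).
\end{equation*}
Given $\calP$, I would organize the centers $\{v\} \cup \calP$ into a nearest-neighbor tree: each center $c$ is assigned a maximal annulus $\Ann_{r_c, R_c}(c) \subseteq A$, with $R_c$ roughly one third of the distance from $c$ to its nearest neighbor in $\{v\} \cup \calP$ (or to $\partial A$), so that these annuli involve pairwise disjoint regions of $\omega$. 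By independence and \eqref{eq:arm_defect}, this factorizes into a product $\prod_{c} C(1+\log R_c)\,\pi_6(r_c, R_c)$, with $r_v = n_1$ and $r_w = 1$ for $w \in \calP$.

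The key density estimate bounds, for each passage site, the sum over its possible position in a dyadic shell of radius $\asymp 2^k$:
\begin{equation*}
\sum_{w:R_w\asymp 2^k} C(1+\log R_w)\,\pi_6(1, R_w) \lesssim 2^{2k}\cdot k \cdot 2^{-(2+\beta)k} = k\,2^{-\beta k},
\end{equation*}
using the volume of the shell together with \eqref{eq:6arm} (crucially, the $6$-arm exponent strictly exceeds $2$). Summing over $k \geq 0$ gives a universal constant $C_0$, so each additional passage site contributes at most $\deltaKMS C_0$. The geometric series $\sum_{k \geq 0}(\deltaKMS C_0)^{k}$ converges to a universal constant once $\deltaKMS C_0 < \tfrac{1}{2}$, and choosing such a $\deltaKMS$ is the content of the universality of the constant in the statement. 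When multiplied by the contribution of the arm around $v$, which in the worst case $\calP = \emptyset$ is bounded by $C(1+\log(n_2/n_1))\,\pi_6(n_1, n_2)$ via \eqref{eq:arm_defect}, this produces the desired estimate; the non-trivial cases $\calP \neq \emptyset$ are dominated, since the smaller arm around $v$ has $\pi_6(n_1, R_v) > \pi_6(n_1, n_2)$ but the adjacent passage site contributes a compensating $\pi_6(1, R_w)$ factor, gluing back to $\pi_6(n_1, n_2)$ via quasi-multiplicativity \eqref{eq:quasi_mult}.

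The main obstacle will be rigorously implementing the above summation: the nearest-neighbor tree can be degenerate for clustered passage sites, requiring careful disjointification of the maximal annuli so that independence can be applied. A related difficulty is controlling the exact combinatorial count over witnessing configurations $\calP$ (which may fail to be unique) without incurring extra entropic factors; this can be handled by committing to a canonical minimal choice (for instance, a lexicographic-minimal $\calP$) and a tree-graph bound analogous to those in cluster expansions. Finally, one must verify that the cumulative $C(1+\log R_c)$ factors across the various arm events are absorbed individually by the polynomial density decay, rather than accumulating into a $(1+\log)^{|\calP|+1}$ catastrophe; the computation above shows that this absorption occurs scale-by-scale, provided the dyadic grouping is done before the geometric sum in $|\calP|$, which I believe can be made systematic via an inductive accounting in $k$.
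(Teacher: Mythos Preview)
Your approach is genuinely different from the paper's, and the comparison is instructive. You propose a direct summation over the full set $\calP$ of passage sites, organizing them into a nearest-neighbor tree and factorizing the $\darm_6$ contributions over disjoint annuli; this is essentially the strategy of Propositions~14--15 in \cite{KMS2015}. The paper instead runs an induction on dyadic scales $(2^i,2^j)$: writing $\marm_6(\Ann_{2^i,2^j}) \subseteq \darm_6(\Ann_{2^{i+3},2^{j-3}}) \cup (\text{complement})$, it observes that on the complement there must exist at least \emph{one} passage site $w$ in the intermediate annulus, and that this single $w$ splits the event into three independent sub-events, each again of type $\marm_6$ on a strictly smaller annulus. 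Applying the induction hypothesis three times and summing over the dyadic shell containing $w$ yields a bound $(c_0 + c_4 C^3 \deltaKMS)(1+j-i)\pi_6(2^i,2^j)$, which closes once $C$ is chosen large and $\deltaKMS$ small. The paper's route buys exactly what you identify as your main obstacles: it never needs to disjointify annuli for an arbitrary tree of passage sites, never needs a canonical choice of $\calP$, and the single logarithm is preserved automatically because only one new $\darm_6$ factor (with its own logarithm) enters at each inductive step while the other two are absorbed into $\marm_6$ and handled by the hypothesis.

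Your sketch has a real gap at the point where you claim ``each additional passage site contributes at most $\deltaKMS C_0$'': this would be immediate if the radii $R_w$ were independent of each other, but in your nearest-neighbor construction the scale available to each $w\in\calP$ depends on the positions of all the others, so the factorization $\prod_w(\deltaKMS C_0)$ is not justified without the full tree-graph inequality machinery. You acknowledge this, and you are right that it can be carried out (it is essentially what \cite{KMS2015} does), but it is substantially more work than your write-up suggests. Your closing remark that the accounting ``can be made systematic via an inductive accounting'' is in fact pointing toward the paper's solution: the induction, however, should be on the spatial scale (so that one passage site is peeled off at a time and the rest of $\calP$ is hidden inside the inductive hypothesis for $\marm_6$), not on $|\calP|$.
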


We establish it by induction. Our reasoning has a similar flavor as some proofs of ``stability results'' for near-critical percolation, see in particular Lemma~8.4 of \cite{GPS2018a} and Theorem~4.1 of \cite{BN2018}.

\begin{proof}[Proof of Theorem~\ref{thm:modif_6_arms}]
We can assume, without loss of generality, that $v = 0$. In order to ease notation, we also write $\PP$ for $\PP_{p_c, \deltaKMS}$ in this proof (only). First, we observe that it is sufficient to consider $n_1$ and $n_2$ of the form $2^i$ and $2^j$, respectively, for any two integers $0 \leq i < j$. The proof is by induction on $j$ and, for fixed $j$, on the difference $j-i$.

Note that for each given $M \geq 1$, we have: for all $i$ and $j$ with $j-i \leq M$, \eqref{eq:modif_6_arms} is satisfied with some suitable $C = C(M) > 0$. Here, for convenience, we take $M = 8$, and we assume that $C > C(M)$ (near the end of the proof, we will see which stronger condition on $C$ and $\deltaKMS$ is needed so that the induction works). Now, suppose that the pair $(i,j)$ with $j-i \geq M+1$ is such that \eqref{eq:modif_6_arms} holds for all pairs $(i',j')$ with $j' < j$, and for all pairs $(i',j)$ with $i' > i$. We show that \eqref{eq:modif_6_arms} also holds for the pair $(i,j)$ itself.

Trivially,
\begin{align}
\PP \big( \marm_6(\Ann_{2^i, 2^j}) \big) & \leq \PP \big( \darm_6(\Ann_{2^{i+3}, 2^{j-3}}) \big) + \PP \big( \marm_6(\Ann_{2^i, 2^j}) \setminus \darm_6(\Ann_{2^{i+3}, 2^{j-3}}) \big) \nonumber\\[1mm]
& =: (\text{Term } 1) + (\text{Term } 2). \label{eq:modif_6_arms_pf1}
\end{align}
First, \eqref{eq:arm_defect} (combined with \eqref{eq:extendability}) implies that
\begin{equation} \label{eq:modif_6_arms_pf2}
(\text{Term } 1) \leq c_0 (1+ j - i) \pi_6(2^i, 2^j),
\end{equation}
for some universal constant $c_0$. Consider now $(\text{Term } 2)$, and suppose that $(\omega, \sigma)$ belongs to the event appearing in it. Let $\calP \subseteq \Ann_{2^i, 2^j}$ be a set of vertices such that with this choice of $\calP$, the pair $(\omega, \sigma)$ satisfies the conditions in the definition of $\marm_6(\Ann_{2^i, 2^j})$. Since $\omega \notin \darm_6(\Ann_{2^{i+3}, 2^{j-3}})$, we have necessarily (see (ii) in the definition of $\marm_6$)
$$\calP \cap \Ann_{2^{i+3}, 2^{j-3}} \neq \emptyset.$$
Choose a vertex $w$ in that set (so in particular, $\sigma_w = 1$), and let $k \geq 3$ be such that $w \in \Ann_{2^k, 2^{k+1}}$. We observe that the following facts hold.
\begin{itemize}
\item The event $\marm_6(\Ann_{0, 2^{k-1}}(w))$ occurs (from Remark~\ref{rem:6_arms_modified}(\ref{rem:6_arms_modified3}), since $\Ann_{0, 2^{k-1}}(w) \subseteq \Ann_{2^i, 2^j}$), as well as $\marm_6(\Ann_{2^i, 2^{k-2}})$ and $\marm_6(\Ann_{2^{k+2}, 2^j})$ (using Remark~\ref{rem:6_arms_modified}(\ref{rem:6_arms_modified4})).

\item Moreover, these three events involve vertices in disjoint annuli, so they are independent (see Remark~\ref{rem:6_arms_modified}(\ref{rem:6_arms_modified2})). In addition, they are independent of $\{ \sigma_w = 1\}$.
\end{itemize}
Hence, we get
\begin{align}
(\text{Term } 2) & \leq \sum_{k=i+3}^{j-4} \sum_{w \in \Ann_{2^k, 2^{k+1}}} \PP \big( \{ \sigma_w = 1 \} \cap \marm_6(\Ann_{0, 2^{k-1}}(w)) \cap \marm_6(\Ann_{2^i, 2^{k-2}}) \cap \marm_6(\Ann_{2^{k+2}, 2^j}) \big) \nonumber\\
& = \sum_{k=i+3}^{j-4} \sum_{w \in \Ann_{2^k, 2^{k+1}}} \PP \big( \sigma_w = 1 \big) \cdot \PP \big( \marm_6(\Ann_{0, 2^{k-1}}(w)) \big) \cdot \PP \big( \marm_6(\Ann_{2^i, 2^{k-2}}) \big) \cdot \PP \big( \marm_6(\Ann_{2^{k+2}, 2^j}) \big) \nonumber\\
& \leq \sum_{k=i+3}^{j-4} c_1 2^{2k} \cdot \deltaKMS \cdot \PP \big( \marm_6(\Ann_{0, 2^{k-1}}) \big) \cdot \PP \big( \marm_6(\Ann_{2^i, 2^{k-2}}) \big) \cdot \PP \big( \marm_6(\Ann_{2^{k+2}, 2^j}) \big), \label{eq:modif_6_arms_rem}
\end{align}
using the independence of the events for the equality, and then $|\Ann_{2^k, 2^{k+1}}| \leq c_1 2^{2k}$ for the second inequality, where $c_1$ is universal (as well as translation invariance). We can now apply the induction hypothesis to each of the three probabilities in the product inside this sum, which gives
\begin{align}
(\text{Term } 2) & \leq c_1 \deltaKMS \sum_{k=i+3}^{j-4} 2^{2k} \cdot C \big( 1+ \log (2^{k-1}) \big) \pi_6(0, 2^{k-1}) \cdot C \bigg( 1 + \log \bigg( \frac{2^{k-2}}{2^i} \bigg) \bigg) \pi_6(2^i, 2^{k-2}) \nonumber\\
& \hspace{5cm} \cdot C \bigg( 1 + \log \bigg( \frac{2^j}{2^{k+2}} \bigg) \bigg) \pi_6(2^{k+2}, 2^j) \nonumber\\
& \leq c_1 C^3 \deltaKMS \sum_{k=i+3}^{j-4} 2^{2k} \cdot \pi_6(2^{k-1}) \cdot c_2 \pi_6(2^i, 2^j) \cdot k (k-i) (1+j-i) \nonumber\\
& \leq c_3 C^3 \deltaKMS (1 + j - i) \pi_6(2^i, 2^j) \sum_{k=i+3}^{j-4} 2^{2k} (2^k)^{-(2 + \beta)} k (k-i) \nonumber\\
& \leq c_3 C^3 \deltaKMS (1 + j - i) \pi_6(2^i, 2^j) \sum_{k=3}^{\infty} k^2 (2^{-\beta})^k \nonumber\\
& = c_4 C^3 \deltaKMS (1 + j - i) \pi_6(2^i, 2^j), \label{eq:modif_6_arms_pf3}
\end{align}
where we used \eqref{eq:extendability} and \eqref{eq:quasi_mult} for the second inequality, and the a-priori upper bound \eqref{eq:6arm} on $6$-arm events for the third one. Combining \eqref{eq:modif_6_arms_pf3} with \eqref{eq:modif_6_arms_pf1} and \eqref{eq:modif_6_arms_pf2}, we get
\begin{equation} \label{eq:modif_6_arms_pf4}
\PP \big( \marm_6(\Ann_{2^i, 2^j}) \big) \leq \big( c_0 + c_4 C^3 \deltaKMS \big) (1 + j - i) \pi_6(2^i, 2^j).
\end{equation}
We now explain how to fix $C$ and $\deltaKMS$ (independently of $i$ and $j$, of course) in order to establish the induction step. Besides the lower bound on $C$ that we had already imposed in the beginning of the proof, we also require $C > 4 c_0$ (so that $c_0 < \frac{1}{4} C$). For such a choice of $C$, we then take $\deltaKMS > 0$ small enough so that $c_4 C^2 \deltaKMS < \frac{1}{4}$. This implies that the right-hand side of \eqref{eq:modif_6_arms_pf4} is at most
$$\bigg( \frac{1}{4} C + \frac{1}{4} C \bigg) (1 + j - i) \pi_6(2^i, 2^j) = \frac{1}{2} C (1 + j - i) \pi_6(2^i, 2^j).$$
This completes the induction step (using that $\log 2 > \frac{1}{2}$), and thus the proof of Theorem~\ref{thm:modif_6_arms}.
\end{proof}

\begin{remark} \label{rem:modif_6_arms_indep}
Strictly speaking, the hypothesis that the two configurations $\omega$ and $\sigma$ are independent is not fully needed, and it can be weakened, rather, to the requirement that the pairs $(\omega_v, \sigma_v)$, $v \in  V$, are independent (thus potentially allowing for a complicated dependence between $\omega_v$ and $\sigma_v$, for each $v \in V$). Indeed, the equality in \eqref{eq:modif_6_arms_rem} comes from the independence of four events, which still holds true under the weaker assumption. Hence, the induction step remains valid in this case, and thus the whole proof of Theorem~\ref{thm:modif_6_arms}. This observation turns out be important later, when we apply such results to the $N$-parameter forest fire process, as we explain in Section~\ref{sec:stab_FF}.
\end{remark}

\subsection{Applications of Theorem~\ref{thm:modif_6_arms}} \label{sec:appl_6arms}

In Section~\ref{sec:thm_sdp}, we first derive a crossing estimate (after burning and recovery), Theorem~\ref{thm:sdp}, from the stability result for six arms, before discussing classical consequences of this estimate in Section~\ref{sec:csq_sdp}.

\subsubsection{Crossing estimate} \label{sec:thm_sdp}

In this section, we show how Theorem~\ref{thm:modif_6_arms}, together with the above-mentioned part (i) of Lemma~16 in \cite{KMS2015}, quickly gives a version of Theorem~4 in \cite{KMS2015}, which is slightly weaker, but sufficient for the purposes in that paper. Roughly speaking, this result provides a useful upper bound for the existence of a crossing in a rectangle after both a ``macroscopic'' fire and then partial recovery took place.

To this end, let us recall notations from \cite{KMS2015}. First, we let
$$R_n := [-2n, 2n] \times [0, n] \quad \text{and} \quad S_n := [-3n, 3n] \times [0, n],$$
and we denote by $\din_L S_n$ and $\din_R S_n$ the left and right sides of $\din S_n$, respectively. Also, exactly as in \cite{KMS2015}, we introduce the set of vertices
\begin{equation} \label{eq:def_chi}
\chi := \big \{ v \in S_n \: : \: v \stackrel{\omega}{\llra} \din_L S_n \text{ and } v \stackrel{\omega}{\llra} \din_R S_n \text{ in } S_n \big \}
\end{equation}
(here, we write $\stackrel{\omega}{\llra}$ to stress that occupied connections are considered in the configuration $\omega$). Finally, we define the configurations $\tilde{\omega}$ and $\tilde{\omega}^{\sigma}$, obtained from $\omega$ as follows. For each $v \in S_n$,
$$\tilde{\omega}_v :=
\begin{cases}
0 & \text{ if } v \in \ol{\chi} := \chi \cup \dout \chi,\\[1mm]
1 & \text{ otherwise,}
\end{cases}$$
and
$$\tilde{\omega}^{\sigma}_v :=
\begin{cases}
\tilde{\omega}_v \vee \sigma_v & \text{ if } v \in R_n,\\[1mm]
\tilde{\omega}_v & \text{ otherwise}
\end{cases}$$
(in other words, only the vertices in the middle rectangle $R_n$ are affected by $\sigma$). A vertex $v \in S_n$ is said to be \emph{enhanced} if $\tilde{\omega}_v = 0$ and $\tilde{\omega}^{\sigma}_v = 1$ (in particular, $v$ belongs necessarily to $R_n$).

\begin{figure}[t]
\begin{center}

\includegraphics[width=.75\textwidth]{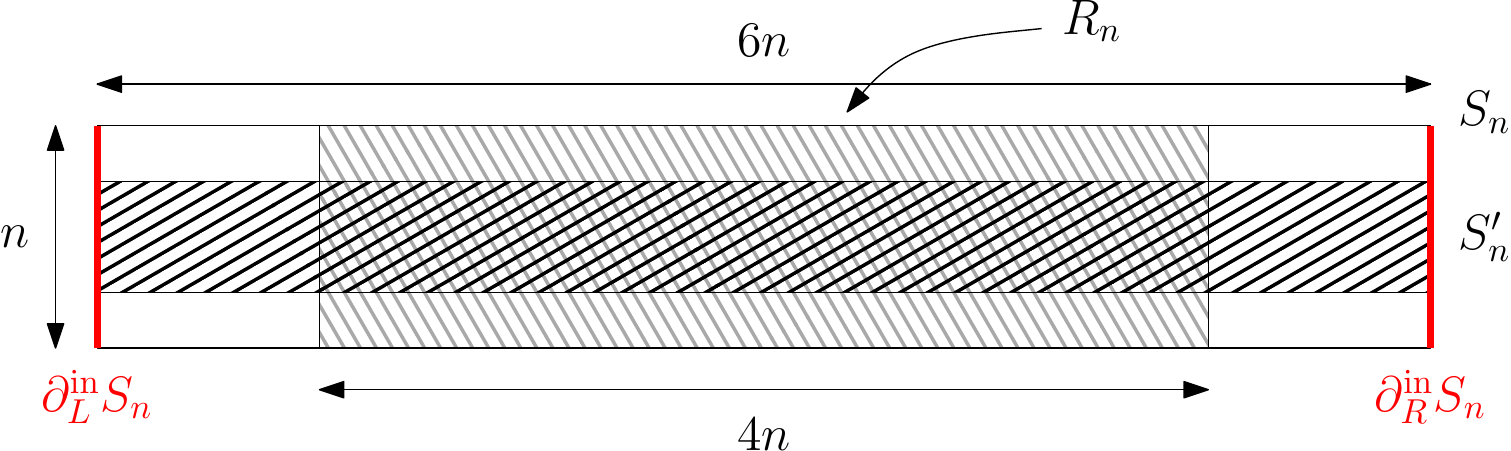}
\caption{\label{fig:rectangles} The larger rectangle is $S_n$, and its sub-rectangles $R_n$ and $S'_n$ are shaded in grey and black, respectively. We use the left and right sides of $S_n$, denoted by $\din_L S_n$ and $\din_R S_n$.}

\end{center}
\end{figure}

Now, differently from \cite{KMS2015}, we also define the sub-rectangle
$$S'_n := [-3n, 3n] \times \bigg[ \frac{n}{4}, \frac{3n}{4} \bigg]$$
of $S_n$ (Figure~\ref{fig:rectangles} shows the various rectangles involved). We establish the following result, analogous to Theorem~4 in \cite{KMS2015}.

\begin{theorem} \label{thm:sdp}
Let $\deltaKMS > 0$ be as in the statement of Theorem~\ref{thm:modif_6_arms}. There exist $\beta > 0$ and $c > 0$ such that: for all $n \geq 1$,
\begin{equation} \label{eq:sdp}
\PP_{p_c, \deltaKMS} \big( \{ \omega \in \Ch(S'_n) \} \cap \{ \tilde{\omega}^{\sigma} \in \Cv(R_n) \} \big) \leq c n^{-\beta}.
\end{equation}
\end{theorem}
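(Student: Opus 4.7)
The plan is to reduce the event in \eqref{eq:sdp} to the occurrence of the modified six-arm event $\marm_6$ around some vertex $v$ at scale $\asymp n$, and then to apply Theorem~\ref{thm:modif_6_arms} combined with the a-priori $6$-arm upper bound \eqref{eq:6arm} and a union bound.

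On the deterministic side, fix any $(\omega,\sigma)$ in the event. Pick an occupied horizontal crossing $\gamma_H$ of $S'_n$ in $\omega$, and a vertical crossing $\gamma_V$ of $R_n$ in $\tilde{\omega}^{\sigma}$. Every vertex of $\gamma_H$ lies in $\chi$ (by \eqref{eq:def_chi}, since $\gamma_H$ connects $\din_L S_n$ to $\din_R S_n$ inside $S_n$), so $\gamma_H\cup\dout \gamma_H \subseteq \ol{\chi}$ and these vertices are all set to $0$ in $\tilde\omega$. Since $\gamma_V$ traverses $R_n$ vertically while $\gamma_H$ blocks horizontally a strip of $R_n$ of height at least $n/2$, the path $\gamma_V$ must cross this ``thick'' barrier: at such a crossing point, the corresponding vertex of $\gamma_V$ must belong to $\ol{\chi}\cap R_n$ and must be enhanced, i.e.\ satisfy $\sigma_v=1$. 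Fix one such vertex $v_0 \in R_n\cap S'_n$; in particular, $\Ann_{0,n/8}(v_0) \subseteq S_n$, the two occupied arms of $\omega$ along $\gamma_H$ reach the sides of $S_n$ at distance $\geq n$, and the two sub-arcs of $\gamma_V$ reach the top/bottom of $R_n$ at distance $\geq n/4$.

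The next step, which is the core geometric step and the main obstacle, is to show that $\marm_6(\Ann_{0,n/8}(v_0))$ holds, with $\calP$ taken to be the set of enhanced vertices of $\gamma_V$ lying in $\Ann_{0,n/8}(v_0)$ (this is inspired by Lemma~16(i) of \cite{KMS2015}, which was actually baked into Definition~\ref{def:6_arms_modified}). Informally: for any annulus $\Ann_{m_1,m_2}(w)\subseteq \Ann_{0,n/8}(v_0)$ as in Definition~\ref{def:6_arms_modified}, either $\Ball_{m_1}(w)\supseteq \{v_0\}$ or $\Ball_{m_1}(w)$ contains a passage site, so that inside $\Ann_{m_1,m_2}(w)$ the path $\gamma_V$ is a genuine occupied $\omega$-path (no recoveries are used). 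Combining this with the two occupied arms along $\gamma_H$ and with four vacant arms extracted from $\dout \ol{\chi}$ on either side of $\gamma_H$ yields a $6$-arm configuration of type $(ovvovv)$ in $\omega$, up to a defect of size $\leq 2$ on one of the occupied arms (caused by the fact that the outer boundary of $\chi$ may come close to itself). This is exactly the content of Definition~\ref{def:6_arms_modified}.

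Granted this geometric claim, Theorem~\ref{thm:modif_6_arms} gives, for $\deltaKMS>0$ as fixed there and for each $v\in R_n\cap S'_n$,
\begin{equation*}
\PP_{p_c,\deltaKMS}\big(\marm_6(\Ann_{0,n/8}(v))\big) \leq C\big(1+\log(n/8)\big)\pi_6(n/8).
\end{equation*}
Summing over all $v\in R_n\cap S'_n$ (at most $O(n^2)$ vertices) and using the a-priori bound \eqref{eq:6arm}, namely $\pi_6(r)\leq C r^{-(2+\beta_0)}$ for some universal $\beta_0>0$, we obtain
\begin{equation*}
\PP_{p_c,\deltaKMS}\big(\{\omega\in\Ch(S'_n)\}\cap\{\tilde{\omega}^{\sigma}\in\Cv(R_n)\}\big) \leq C' n^2 \cdot \log n \cdot n^{-(2+\beta_0)} \leq c\,n^{-\beta}
\end{equation*}
for any $\beta<\beta_0$ and $n$ large enough (absorbing $\log n$ into a slightly smaller exponent), which yields \eqref{eq:sdp}.
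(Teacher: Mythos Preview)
Your overall strategy --- reduce to $\marm_6$ around a passage site, apply Theorem~\ref{thm:modif_6_arms}, union bound, and the six-arm a-priori estimate \eqref{eq:6arm} --- matches the paper's proof. However, there is a real gap in the deterministic step.

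You take $\gamma_V$ to be \emph{any} $\tilde{\omega}^{\sigma}$-occupied vertical crossing of $R_n$. The paper instead takes a crossing with a \emph{minimal} number of enhanced vertices (a geodesic in the sense of Section~\ref{sec:geodesics}), and this minimality is exactly what Lemma~16(i) of \cite{KMS2015} (equivalently, Lemma~\ref{lem:det_6arms} here) requires. Without it, the six-arm configuration in a sub-annulus $\Ann_{m_1,m_2}(w)$ need not hold: if $\gamma_V$ is not minimal, the two arcs of $\gamma_V$ on either side of a passage site $v$ may lie in the same connected component of the $\tilde{\omega}$-occupied vertices, so $\ol{\chi}$ does not locally separate them. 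In that case the two $\omega$-occupied arms emanating from $\chi$ near $v$ may fall on the same side of the path, destroying the alternating $(ovvovv)$ structure needed for $\darm_6$. So your claim that $\marm_6(\Ann_{0,n/8}(v_0))$ holds with $\calP$ equal to the enhanced vertices of $\gamma_V$ is not justified as stated; it becomes correct once you choose $\gamma_V$ to be a geodesic.

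Two smaller points. First, the occupied arms in the six-arm picture do not come ``along $\gamma_H$'': they come from $\chi$, which near a given passage site need not contain any piece of $\gamma_H$. Second, the assertion that in the sub-annulus ``$\gamma_V$ is a genuine occupied $\omega$-path'' is not right: the arcs of $\gamma_V$ there are $\tilde{\omega}$-occupied (i.e., they avoid $\ol{\chi}$), which is not the same as being $\omega$-occupied. These arcs only serve to partition the annulus into two halves; the actual six arms are extracted from $\chi$ and $\dout\chi$, cf.\ the proof sketch of Lemma~\ref{lem:det_6arms}. Once you insert the minimality requirement and correct the source of the arms, your argument coincides with the paper's.
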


Note that the difference with \cite{KMS2015} is that we require the existence of a horizontal crossing in $S'_n$ (instead of $S_n$), which is a stronger condition. We do so in order to avoid boundary effects along the top and bottom sides of $S_n$, which would require additional care (as in \cite{KMS2015}). As we mentioned earlier, this slightly weaker version turns out to be sufficient for all further applications in that paper, see Section~\ref{sec:csq_sdp}.

\begin{proof}[Proof of Theorem~\ref{thm:sdp}]
We will see that the proof is an almost immediate consequence of part(i) of Lemma~16 in \cite{KMS2015}, combined with Theorem~\ref{thm:modif_6_arms} above (and \eqref{eq:6arm}). As in \cite{KMS2015}, we first remark that it is obviously sufficient to show \eqref{eq:sdp} for the larger event obtained by replacing $R_n$ by $S_n$ in \eqref{eq:sdp}.

Suppose that $\omega \in \Ch(S'_n)$ and $\tilde{\omega}^{\sigma} \in \Cv(S_n)$. Let $\gamma$ be a vertical $\tilde{\omega}^{\sigma}$-occupied crossing of $S_n$ with a minimal number of enhanced vertices, which, from now on, we call \emph{passage sites} (recall that by the definition of $\tilde{\omega}^{\sigma}$, all such vertices lie in $R_n$). We want to mention that in \cite{KMS2015}, $\gamma$ is chosen to be the leftmost such path, but this was only done to make the choice of $\gamma$ unique, and played no role in the proof.

We observe that from Lemma~16(i) in \cite{KMS2015} and our definition of $\marm_6$, for each passage site $v$ and each $m$ such that $\Ball_m(v) \subseteq S_n$, we have $(\omega, \sigma) \in \marm_6(\Ann_{0,m}(v))$. In addition, note that, clearly, $\gamma$ has a passage site which lies in $R_n \cap S'_n$, and is thus at a distance at least $\frac{n}{4}$ from $\din S_n$. It follows that for the event in the left-hand side of \eqref{eq:sdp},
$$\{ \omega \in \Ch(S'_n) \} \cap \{ \tilde{\omega}^{\sigma} \in \Cv(R_n) \} \subseteq \bigcup_{v \in R_n \cap S'_n} \marm_6 \big( \Ann_{0,\frac{n}{4}}(v) \big).$$
Hence, we deduce from the union bound, and then Theorem~\ref{thm:modif_6_arms} (recall that we chose $\deltaKMS > 0$ as in the statement of this result), that
\begin{align*}
\PP_{p_c, \deltaKMS} \big( \{ \omega \in \Ch(S'_n) \} \cap \{ \tilde{\omega}^{\sigma} \in \Cv(R_n) \} \big) & \leq \sum_{v \in R_n \cap S'_n} \PP_{p_c, \deltaKMS} \big( \marm_6 \big( \Ann_{0,\frac{n}{4}}(v) \big) \big)\\
& \leq \big| R_n \cap S'_n \big| \cdot C \bigg( 1 + \log \bigg( \frac{n}{4} \bigg) \bigg) \pi_6 \bigg( \frac{n}{4} \bigg),
\end{align*}
where $C$ is universal. This allows us to conclude, since the $6$-arm exponent is $> 2$ (see \eqref{eq:6arm}) and $| R_n \cap S'_n | = O(n^2)$ as $n \to \infty$.
\end{proof}

\subsubsection{Classical consequences of Theorem~\ref{thm:sdp}} \label{sec:csq_sdp}

We now discuss properties which are quite directly implied by Theorem~\ref{thm:sdp}. In \cite{KMS2015}, the authors point out that Theorem~4 in that paper implies the ``conditional'' results in \cite{BB2004} and \cite{BB2006}, especially the following ones.
\begin{enumerate}[(i)]
\item There exists $\deltaKMS > 0$ small enough such that for self-destructive percolation,
$$\text{for all } p > p_c, \quad \theta(p, \deltaKMS) := \PP_{p,\deltaKMS}(0 \lra \infty) = 0$$
(when we first make vacant all vertices in the infinite occupied cluster $\Cinf$ at $p > p_c$, before performing a $\deltaKMS$-enhancement over the whole lattice: see the notations in \cite{BB2004}, as well as Theorem~3.3 there).

\item There exists a time $t > t_c$ such that for the $N$-parameter forest fire process (recall the notation from Section~\ref{sec:processes}),
\begin{equation} \label{eq:csqFF1}
\text{for all } m \geq 1, \quad \liminf_{N \to \infty} \PP_N \big( \text{a fire occurs in } \Ball_m \text{ before time } t \big) \leq \frac{1}{2}
\end{equation}
(see Theorem~4.2 in \cite{BB2006}).
\end{enumerate}

The results in \cite{BB2004} are called conditional because they are proved there under the assumption that Conjecture~3.2 in that paper holds. In \cite{KMS2015}, it is observed that, although Theorem~4 of \cite{KMS2015} does not (easily) imply Conjecture~3.2 of \cite{BB2004}, it does imply its ``main consequence'', Lemma~3.4 of \cite{BB2004}, which itself implies the mentioned conditional results in \cite{BB2004}. In fact, it is shown in \cite{KMS2015} that their Corollary~5 (which is equivalent to Lemma~3.4 of \cite{BB2004}) follows from their Theorem~4.

Although, as we said before, our Theorem~\ref{thm:sdp} is weaker than Theorem~4 of \cite{KMS2015}, it also implies Corollary~5 in that paper (and hence, the mentioned conditional results in \cite{BB2004}). Indeed, it suffices to replace, in the proof of Corollary~5 from Theorem~4 in \cite{KMS2015}, the event $\Ch(S_B)$ (i.e. the event that there is a horizontal crossing of the rectangle $S_B = S_B(n) = [-3n, 3n] \times [-2n, -n]$ considered in that paper) by the event $\Ch(S'_B)$ that there exists a horizontal crossing of the narrower rectangle $S'_B = [-3n, 3n] \times [- \frac{7}{4} n, - \frac{5}{4} n]$, and make similar replacements for the events $\Ch(S_T)$, $\Cv(S_L)$ and $\Cv(S_R)$ (which are rotated / translated versions of $\Ch(S_B)$, see Figure~\ref{fig:circuit}).

\begin{figure}[t]
\begin{center}

\includegraphics[width=.65\textwidth]{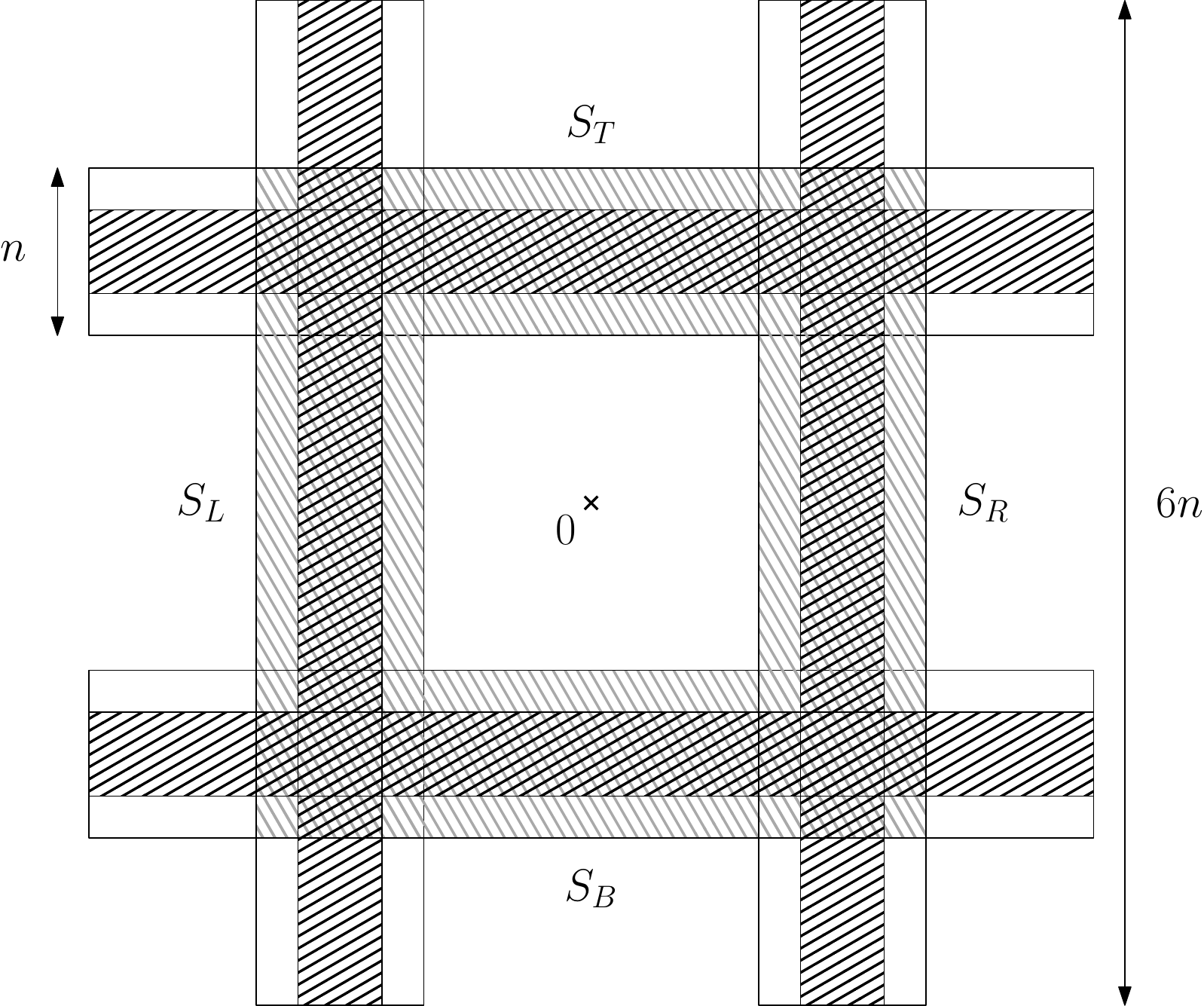}
\caption{\label{fig:circuit} We consider the rectangles $S_B$, $S_T$, $S_L$ and $S_R$, obtained by translating and rotating $S_n$, and the corresponding sub-rectangles $R_.$ (shaded in grey) and $S'_.$ (in black).}

\end{center}
\end{figure}

The authors of \cite{KMS2015} also remark that their Theorem~4 implies Conjecture~2.1 in \cite{BB2006}, from which the conditional results in \cite{BB2006} follow. Again, although our Theorem~\ref{thm:sdp} is weaker than Theorem~4 of \cite{KMS2015}, it also implies Conjecture~2.1 in \cite{BB2006}.

Concluding, our Theorem~\ref{thm:sdp}, that we just established, allows us to obtain the conditional results in \cite{BB2004} and \cite{BB2006} (as Theorem~4 of \cite{KMS2015} did).

Moreover, this result, or, rather, an extension / generalization of Theorem~\ref{thm:modif_6_arms}, combined with the results and reasonings in \cite{BKN2018}, solves one of the main open problems in \cite{BB2006}. More precisely, we will use it to show that, for the $N$-parameter forest fire process, the following (much) stronger version of \eqref{eq:csqFF1} holds: there exists $t > t_c$ such that
\begin{equation} \label{eq:csqFF2}
\text{for all } m \geq 1, \quad \PP_N \big( \text{a fire occurs in } \Ball_m \text{ before time } t \big) \stackrel[N \to \infty]{}{\longrightarrow} 0.
\end{equation}
The next section presents such an extension of Theorem~\ref{thm:modif_6_arms}, as well as a useful adaptation of Theorem~\ref{thm:sdp} that it implies. We will then explain, in Sections~\ref{sec:stab_recov} and \ref{sec:dec_FF}, how these generalizations may be used in the analysis of the $N$-parameter forest fire process.

\subsection{Generalizations} \label{sec:gen_6arms}

We now prove various extensions of Theorems~\ref{thm:modif_6_arms} and \ref{thm:sdp}, which will be useful later in the paper.

\subsubsection{Burning the cluster of an arbitrary circuit}

We first establish a version of Theorem~\ref{thm:sdp} which is needed later, where we burn circuits in an annulus, instead of horizontal crossings in a rectangle. Moreover, this result provides more flexibility than Theorem~\ref{thm:sdp}, in the sense that only a single circuit is destroyed (together with its connected component in the annulus), and we can choose freely this circuit (it does not matter which of them is burnt). This property is useful in Section~\ref{sec:dec_proof}, where at some point we burn only the vertices connected to the outermost circuit in a given annulus. It is easy to convince oneself that Theorem~\ref{thm:sdp} can be strengthened in a similar fashion, i.e. if we add the possibility to choose which of the horizontal crossings to burn.

For this purpose, we introduce the following notation, which resembles that of Section~\ref{sec:thm_sdp}.

\begin{definition}
Let $A := \Ann_{n_1, n_2}(v)$, for some $0 \leq n_1 < n_2$ and $v \in V$, and consider $\omega$, $\sigma \in \{0,1\}^A$. For all $w \in A$, we define the configurations $\tilde{\omega}^w$ and $\tilde{\omega}^{w, \sigma}$ as: for each $u \in A$,
$$\tilde{\omega}_u^w :=
\begin{cases}
0 & \text{ if } u \in \ol{\cluster(w)} := \cluster(w) \cup \dout \cluster(w),\\[1mm]
1 & \text{ otherwise,}
\end{cases}$$
where $\cluster(w)$ denotes the $\omega$-occupied cluster of $w$ in $A$ (i.e. the set of all vertices which are connected to $w$ by an $\omega$-occupied path in $A$), and
$$\tilde{\omega}_u^{w, \sigma} := \tilde{\omega}_u^w \vee \sigma_u.$$
\end{definition}

We are now in a position to state the desired analog of Theorem~\ref{thm:sdp}.

\begin{theorem} \label{thm:sdp_circ}
Let $\deltaKMS > 0$ be as in the statement of Theorem~\ref{thm:modif_6_arms}. There exists $\beta > 0$ such that the following holds. For all $0 < a < a' < b' < b$, there exists $c > 0$ such that for all $v \in V$, and $n \geq 1$, we have: in the annulus $A := \Ann_{a n, b n}(v)$,
\begin{align}
\PP_{p_c, \deltaKMS} \big( \text{there exists } w \in V \text{ on an } \omega\text{-occupied circuit in } \Ann_{a' n, b' n}(v) \text{ s.t. } \tilde{\omega}^{w, \sigma} & \in \arm_1(\Ann_{a n, b n}(v)) \big) \nonumber \\[1mm]
& \leq c n^{-\beta}. \label{eq:sdp_circ}
\end{align}
\end{theorem}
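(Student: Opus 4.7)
The plan is to mirror the proof of Theorem~\ref{thm:sdp}, replacing the horizontal-crossing/vertical-crossing geometry by the occupied-circuit/radial-arm geometry, and then invoke Theorem~\ref{thm:modif_6_arms} through a union bound over the possible passage sites.

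Fix $v \in V$; write $A := \Ann_{an, bn}(v)$, $A' := \Ann_{a'n, b'n}(v)$, and set $\rho := \tfrac{1}{2} \min(a'-a, b-b') \, n$. Suppose that the event in \eqref{eq:sdp_circ} holds: there exist a vertex $w$ on an $\omega$-occupied circuit $\ell \subseteq A'$ (surrounding $v$), together with a $\tilde{\omega}^{w, \sigma}$-occupied path $\gamma$ from $\dout \Ball_{an}(v)$ to $\din \Ball_{bn}(v)$ in $A$; we may additionally assume that $\gamma$ has the minimum number of \emph{passage sites}, i.e., of vertices $u \in \overline{\cluster(w)}$ with $\sigma_u = 1$. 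Because $\ell \subseteq A'$ surrounds $\Ball_{a'n}(v) \supseteq \Ball_{an}(v)$ and lies inside $\Ball_{b'n}(v) \subseteq \Ball_{bn}(v)$, it topologically separates the two sides of $A$, so $\gamma$ must cross $\ell$ at some vertex $u$. Then $u \in \ell \subseteq \cluster(w)$, hence $\tilde{\omega}^w_u = 0$, which forces $\sigma_u = 1$: so $u$ is a passage site. Moreover $u \in A'$, so $d(u, \partial A) \geq 2 \rho$, and $\Ann_{0, \rho}(u) \subseteq A$.

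The local picture at $u$ is the exact analog of the one analyzed in Lemma~16(i) of \cite{KMS2015}: two $\omega$-occupied arms come from the two arcs of $\ell$ leaving $u$ (of length at least $2 a' n \gg \rho$, so they reach $\din \Ball_{\rho}(u)$ without meeting); two further occupied arms (possibly with one defect, as in the definition of $\darm_6$) come from the two branches of $\gamma$ on either side of $u$, which by minimality of passage sites have no other passage site in $\Ball_{\rho}(u)$ and are therefore $\omega$-occupied outside $\overline{\cluster(w)}$; and four $\omega$-vacant arms sit in the four resulting interstices, because otherwise $\gamma$ could be rerouted through $\Ball_{\rho}(u)$ so as to strictly reduce its number of passage sites. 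This is a topological repetition of the argument in \cite{KMS2015}, and yields $(\omega, \sigma) \in \marm_6(\Ann_{0, \rho}(u))$.

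Hence the event in \eqref{eq:sdp_circ} is contained in $\bigcup_{u \in A'} \marm_6(\Ann_{0, \rho}(u))$, and a union bound combined with Theorem~\ref{thm:modif_6_arms} and \eqref{eq:6arm} gives
\begin{equation*}
\PP_{p_c, \deltaKMS}\bigl(\text{event in \eqref{eq:sdp_circ}}\bigr) \; \leq \; \sum_{u \in A'} \PP_{p_c, \deltaKMS}\bigl(\marm_6(\Ann_{0, \rho}(u))\bigr) \; \leq \; C \, |A'| \, (1 + \log \rho) \, \pi_6(\rho) \; \leq \; c \, n^{-\beta}
\end{equation*}
for some universal $\beta > 0$ and some $c = c(a, a', b', b) > 0$, since $\rho \asymp n$, $|A'| = O(n^2)$, and $\pi_6(\rho) \leq C' \rho^{-2 - \beta_0}$ with $\beta_0 > 0$. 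The main obstacle is the purely deterministic local analysis at the passage site: one must verify that the minimality of passage sites in this circuit/radial geometry still forces the full pattern of four vacant arms required by Definition~\ref{def:6_arms_modified}, even when the two branches of $\gamma$ brush against $\overline{\cluster(w)}$ on opposite sides (which is what forces at most one defect on one of the occupied arms). Once the analog of Lemma~16(i) of \cite{KMS2015} is carefully established in this setup, the rest of the proof is a direct application of Theorem~\ref{thm:modif_6_arms}.
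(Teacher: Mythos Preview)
Your overall strategy matches the paper's: locate a passage site of a minimal path $\gamma$ inside $A'$, invoke the deterministic six-arm analysis to obtain $\marm_6$ around it, then finish with a union bound and Theorem~\ref{thm:modif_6_arms}. Your argument that $\gamma$ must intersect the circuit $\ell$ (and hence has a passage site in $A'$) is correct, and is exactly what the paper means by ``clear from the definitions''.

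However, your local six-arm picture at $u$ is flawed on two points. First, the branches of $\gamma$ away from $u$ are \emph{not} $\omega$-occupied: by definition $\tilde\omega^w_v = \ind_{v \notin \overline{\cluster(w)}}$ irrespective of $\omega_v$, so a non-enhanced vertex on $\gamma$ merely lies outside $\overline{\cluster(w)}$ and may perfectly well be $\omega$-vacant. Thus $\gamma$ cannot supply occupied arms for $\darm_6$, and your arm count (two occupied from $\ell$, two ``occupied'' from $\gamma$, four vacant) does not yield the required pattern $(ovvovv)$. Second, global minimality of the number of passage sites on $\gamma$ does not preclude several of them lying in $\Ball_\rho(u)$; this is precisely why the event $\marm_6$ (with its auxiliary set $\calP$) was introduced in Definition~\ref{def:6_arms_modified}, rather than working with $\darm_6$ in a single annulus.

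The correct picture, following Lemma~16(i) of \cite{KMS2015} (see also Lemma~\ref{lem:det_6arms} and Figure~\ref{fig:geodesics}), is this: $\gamma$ partitions any sub-annulus free of passage sites into two regions; in each region the minimality forces $\cluster(w)$ itself to cross (otherwise $\gamma$ could be rerouted through that region entirely inside the complement of $\overline{\cluster(w)}$, which is automatically $\tilde\omega^{w,\sigma}$-occupied, shedding the passage sites in the inner ball), and the vacant outer boundary of $\cluster(w)$ then supplies two vacant arms on either side of that occupied arm. The possible defect on one occupied arm arises when the passage site lies in $\dout\cluster(w)$ rather than $\cluster(w)$. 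Taking $\calP$ to be the set of all passage sites of $\gamma$ in $\Ann_{0,\rho}(u)$ then witnesses $(\omega,\sigma) \in \marm_6(\Ann_{0,\rho}(u))$, after which your union-bound conclusion goes through unchanged.
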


\begin{proof}[Proof of Theorem~\ref{thm:sdp_circ}]
Let $A' := \Ann_{a' n, b' n}(v)$. Suppose that $\omega$, $\sigma \in \{0,1\}^A$ have the property that for some vertex $w \in A'$, $w$ belongs to an $\omega$-occupied circuit in $A'$ and $\tilde{\omega}^{w, \sigma} \in \arm_1(A)$ (as in the left-hand side of \eqref{eq:sdp_circ}). We pick such a $w$, so that there is an $\tilde{\omega}^{w, \sigma}$-occupied arm in $A$, and we consider such a path $\gamma$ containing a minimal number of enhanced sites, i.e. of vertices $u$ which are $\tilde{\omega}^w$-vacant (in addition to being $\tilde{\omega}^{w, \sigma}$-occupied). Similarly to the proof of Theorem~\ref{thm:sdp}, these enhanced vertices on $\gamma$ are called \emph{passage sites}.

From the definitions, it is clear that there is at least one passage site in $A'$, and we pick such a vertex $u$. Clearly, it lies at a distance at least $d n$ from the boundary of $A$, where $d = \min(a' - a, b - b')$. By practically the same arguments as in the deterministic Lemma~16(i) in \cite{KMS2015}, we get that $(\omega, \sigma) \in \marm_6(\Ann_{0, d n}(u))$. The proof can now be completed in the same way as the last part of the proof of Theorem~\ref{thm:sdp}.
\end{proof}

\subsubsection{Near-critical extension}

We now present a generalization of Theorem~\ref{thm:modif_6_arms}, from critical to near-critical percolation, which follows from by-now classical results. We first need to give suitable modifications of the earlier definitions.

We generalize Definition~\ref{def:6_arms_modified} as follows. Let $A := \Ann_{n_1, n_2}(v)$, for some $0 \leq n_1 < n_2$ and $v \in V$. As for $\marm_6(A)$, we still take $\sigma \in \{0,1\}^{\Ball_{n_2}(v)}$, but we now consider $\omega \in [0,1]^{\Ball_{n_2}(v)}$ instead. For all $p \in [0,1]$, we let $\omega^{(p)} \in \{0,1\}^{\Ball_{n_2}(v)}$ be the configuration defined by: for each vertex $w$, $\omega^{(p)}_w = 1$ if $\omega_w \leq p$ (in this case, $w$ is said to be $p$-occupied), and $\omega^{(p)}_w = 0$ otherwise ($w$ is $p$-vacant).

For any two parameters $p, p' \in (0,1)$, we then define $\darm_6^{p,p'}$ as the extension of $\darm_6$ where ``occupied'' and ``vacant'' are replaced by ``$p$-occupied'' and ``$p'$-vacant'', respectively. In other words, $\omega \in \darm_6^{p,p'}(A)$ means that the annulus $A$ is crossed by $6$ disjoint arms with types $(ovvovv)$, where $o$ now stands for ``$\omega$-value $\leq p$'' and $v$ for ``$\omega$-value $> p'$'', and where one of the arms with type $o$ may have a defect, in the same sense as before. This leads us to the following generalized definition of $\marm_6(A)$.

\begin{definition} \label{def:6_arms_modified2}
Let $0 \leq n_1 < n_2$, $v \in V$, and $A := \Ann_{n_1, n_2}(v)$. The modified ``fuzzy'' $6$-arm event $\marm_6^{p,p'}(A)$ is the set of all pairs $(\omega, \sigma) \in [0,1]^{\Ball_{n_2}(v)} \times \{0,1\}^{\Ball_{n_2}(v)}$ such that there exists a (possibly empty) set of vertices $\calP \subseteq A$ for which the two properties below hold.
\begin{enumerate}[(i)]
\item For all $w \in \calP$, $\sigma_w = 1$.

\item For all $w \in \Ball_{n_2}(v)$ and $0 \leq m_1 < m_2$ with $\Ann_{m_1, m_2}(w) \subseteq A$, we have the following property. If
\begin{itemize}
\item $\Ball_{m_1}(w) \supseteq \Ball_{n_1}(v)$ or $\Ball_{m_1}(w) \cap \calP \neq \emptyset$,

\item and $\Ann_{m_1, m_2}(w) \cap \calP = \emptyset$,
\end{itemize}
then necessarily $\omega \in \darm_6^{p,p'}(\Ann_{m_1, m_2}(w))$.
\end{enumerate}
\end{definition}

For this new definition, Theorem~\ref{thm:modif_6_arms} can be generalized as follows.

\begin{theorem} \label{thm:modif_6_arms2}
For all $K \geq 1$, there exist $\deltaKMS = \deltaKMS(K) > 0$ and $C = C(K) > 0$ such that the following holds. For all $p, p' \in (0,1)$, $0 \leq n_1 < n_2 \leq K (L(p) \wedge L(p'))$, and $v \in V$,
\begin{equation} \label{eq:modif_6_arms2}
\PP_{\deltaKMS} \big( \marm_6^{p,p'}(\Ann_{n_1, n_2}(v)) \big) \leq C \bigg( 1 + \log \bigg( \frac{n_2}{n_1 \vee 1} \bigg) \bigg) \pi_6(n_1, n_2).
\end{equation}
\end{theorem}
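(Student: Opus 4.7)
The plan is to repeat the induction of the proof of Theorem~\ref{thm:modif_6_arms} essentially verbatim, replacing every critical estimate by its near-critical counterpart so that all probability bounds acquire a dependence on $K$, but nothing else changes structurally. Under the fuzzy encoding, the values $\omega_v \in [0,1]$ are i.i.d.\ uniform, so configurations in disjoint regions are independent, and the four-fold independence underlying the equality in \eqref{eq:modif_6_arms_rem} is preserved. In particular, the bookkeeping of Remark~\ref{rem:6_arms_modified} carries over unchanged to $\marm_6^{p,p'}$.

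The one new ingredient needed is a near-critical analog of the defect $6$-arm bound \eqref{eq:arm_defect}: whenever $n_2 \leq K(L(p) \wedge L(p'))$,
\[
\PP_{\deltaKMS}\bigl(\darm_6^{p,p'}(\Ann_{n_1, n_2}(v))\bigr) \leq C(K) \Bigl( 1 + \log \Bigl( \frac{n_2}{n_1 \vee 1} \Bigr) \Bigr) \pi_6(n_1, n_2).
\]
To prove this, I would first reduce the two-parameter event to a single-parameter one by monotonicity in the thresholds. If $p \leq p'$, then every $p$-occupied arm is automatically $p'$-occupied, and every $p'$-vacant arm is automatically $p$-vacant, so $\darm_6^{p,p'} \subseteq \darm_6^{p,p}$, an ordinary Bernoulli defect $6$-arm event at density $p$; the case $p \geq p'$ is symmetric and yields an inclusion in $\darm_6^{p',p'}$. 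In either case, the relevant single-parameter event is controlled in the near-critical regime by the stability estimate \eqref{eq:near_critical_arm}, combined with \eqref{eq:extendability} and \eqref{eq:quasi_mult} to handle the position of the defect exactly as in Proposition~18 of \cite{No2008}, with constants depending only on $K$.

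With this input in hand, the induction on the pair $(i,j)$ for $n_1 = 2^i$ and $n_2 = 2^j$ unfolds exactly as before. The base case $j - i \leq M$ is absorbed into $C = C(M, K)$. For the inductive step, the same decomposition
\[
\PP_{\deltaKMS}\bigl(\marm_6^{p,p'}(\Ann_{2^i, 2^j})\bigr) \leq \PP_{\deltaKMS}\bigl(\darm_6^{p,p'}(\Ann_{2^{i+3}, 2^{j-3}})\bigr) + (\mathrm{Term}\ 2)
\]
holds, with Term~2 summed over the innermost passage site $w \in \calP \cap \Ann_{2^k, 2^{k+1}}$. Independence across the three disjoint sub-annuli $\Ann_{0, 2^{k-1}}(w)$, $\Ann_{2^i, 2^{k-2}}$, $\Ann_{2^{k+2}, 2^j}$ and independence from $\{\sigma_w = 1\}$ allow the inductive hypothesis to be applied to each factor, and the geometric decay $(2^k)^{-(2+\beta)}$ needed for the sum over $k$ is provided by the critical $6$-arm upper bound \eqref{eq:6arm} at $p = p_c$ (which applies to $\pi_6$ at all scales, since $L(p_c) = \infty$). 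The standard choice of $C = C(K) > 4 c_0(K)$, followed by $\deltaKMS = \deltaKMS(K)$ small enough that $c_4(K) C^2 \deltaKMS < 1/4$, closes the induction.

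The main obstacle is only the verification of the defect bound displayed above, which reduces cleanly via the monotonicity observation to the single-parameter near-critical stability and thus presents no genuine structural difficulty; once it is in place, the proof of Theorem~\ref{thm:modif_6_arms2} is a cosmetic variant of that of Theorem~\ref{thm:modif_6_arms}. A minor point of care is to check that all sub-annuli arising in the induction stay inside the prescribed near-critical window, which is automatic since each has outer radius at most $2^j \leq K(L(p) \wedge L(p'))$, so the inductive hypothesis and the fuzzy defect bound apply throughout with the same $K$-dependent constants.
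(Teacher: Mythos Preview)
Your overall plan is exactly what the paper does: rerun the induction of Theorem~\ref{thm:modif_6_arms} with near-critical constants, the only new input being a near-critical version of the defect $6$-arm bound~\eqref{eq:arm_defect}. The structure, the independence argument, the choice of $C(K)$ and $\deltaKMS(K)$, and the check that all sub-annuli stay in the window are all correct.

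There is, however, a genuine slip in your monotonicity reduction. When $p \leq p'$ your inclusion $\darm_6^{p,p'} \subseteq \darm_6^{p,p}$ is fine, but the claimed ``symmetric'' inclusion $\darm_6^{p,p'} \subseteq \darm_6^{p',p'}$ for $p \geq p'$ is false: a $p$-occupied arm (value $\leq p$) need not be $p'$-occupied when $p' \leq p$. In fact the case $p \geq p'$ is precisely the one used downstream (see the proof of Theorem~\ref{thm:geod2}, where one lands on $\marm_6^{p',p}$ with $p' > p$), so this is not a harmless asymmetry you can ignore. The fix is to drop the reduction to a single parameter altogether and invoke near-critical stability for \emph{mismatched} arm events directly: since $n_2 \leq K(L(p) \wedge L(p'))$, both thresholds lie in the near-critical window at scale $n_2$, and a standard coupling/separation argument (this is what the paper points to as ``(2.10) in \cite{BKN2018}'') gives $\PP\bigl(\arm_\sigma^{p,p'}(\Ann_{n_1,n_2})\bigr) \asymp \pi_\sigma(n_1,n_2)$ with constants depending only on $K$ and $\sigma$. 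The defect is then handled exactly as in Proposition~18 of \cite{No2008}, as you say. With this correction your proof goes through and coincides with the paper's.
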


\begin{proof}[Proof of Theorem~\ref{thm:modif_6_arms2}]
This result can be obtained from a quite straightforward adaptation of the proof of Theorem~\ref{thm:modif_6_arms}, using standard results about near-critical percolation (such as, in particular, (2.10) in \cite{BKN2018}).
\end{proof}

\subsubsection{Geodesics} \label{sec:geodesics}

In this section, we use Theorem~\ref{thm:modif_6_arms2} to derive a near-critical generalization of Theorem~\ref{thm:sdp}. This is a generalization also in the sense that the set of vertices which is made vacant is more ``flexible''. In addition, the geometric setup is more general, as we now explain.

For this purpose, let $\calD = (\calD, E_{\calD})$ be a domain, i.e. a finite subgraph of $\TT$ (with some abuse of notation, we also use $\calD$ for its set of vertices), $\omega \in [0,1]^{\calD}$ and $\sigma \in \{0,1\}^{\calD}$. For each vertex $w \in \calD$ and each $p \in (0,1)$, we consider the $\omega^{(p)}$-occupied cluster of $w$ in $\calD$, denoted by $\cluster^{(p)}(w) = \cluster^{(p)}(w; \calD)$ (i.e. each vertex $v \in \cluster^{(p)}(w)$ has $\omega_v \leq p$). We need the following modifications of $\tilde{\omega}$ and $\tilde{\omega}^{\sigma}$.

\begin{definition}
Let $w \in \calD$ and $p \in (0,1)$. We define the configurations $\tilde{\omega}^{(p),w}$ and $\tilde{\omega}^{(p), w, \sigma}$ as follows. For each $v \in \calD$,
$$\tilde{\omega}_v^{(p),w} :=
\begin{cases}
0 & \text{ if } v \in \ol{\cluster^{(p)}(w)} := \cluster^{(p)}(w) \cup \dout \cluster^{(p)}(w),\\[1mm]
1 & \text{ otherwise,}
\end{cases}$$
and then $\tilde{\omega}_v^{(p), w, \sigma} := \tilde{\omega}_v^{(p),w} \vee \sigma_v$.
\end{definition}

A path $\gamma$ in $\calD$ is said to be a \emph{geodesic} (with respect to $\tilde{\omega}^{(p),w}$ and $\sigma$) if
\begin{itemize}
\item every vertex on $\gamma$ is $\tilde{\omega}^{(p), w, \sigma}$-occupied (i.e. it has $\tilde{\omega}^{(p), w, \sigma}$-value $1$),

\item and there is no $\tilde{\omega}^{(p), w, \sigma}$-occupied path in $\calD$ with the same endpoints as $\gamma$, but strictly fewer enhanced sites (i.e. vertices $v$ such that $\tilde{\omega}^{(p), w}_v = 0$).
\end{itemize}
If $\gamma$ is a geodesic, the vertices $v$ on $\gamma$ where $\tilde{\omega}^{(p), w}_v = 0$ are called \emph{passage sites} of $\gamma$.

The following purely deterministic lemma is an extension of part (i) of Lemma~16 in \cite{KMS2015}.

\begin{lemma} \label{lem:det_6arms}
Let $u \in \calD$, and $R \geq 1$ be such that $\Ball_R(u) \subseteq \calD$. Let $w \in \calD$, $p \in (0,1)$, and let $\gamma$ be a geodesic in $\calD$ with respect to $\tilde{\omega}^{(p), w}$ and $\sigma$, of which both endpoints lie outside $\Ball_R(u)$. If for some $0 \leq r < R$, $\Ball_r(u)$ contains a passage site of $\gamma$, but $\Ann_{r, R}(u)$ does not contain any such site, then $\omega^{(p)} \in \darm_6(\Ann_{r,R}(u))$.
\end{lemma}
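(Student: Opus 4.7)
My plan is to adapt the deterministic construction in Lemma~16(i) of \cite{KMS2015}, with the $\omega^{(p)}$-cluster $\cluster^{(p)}(w)$ playing the role of the two-sided horizontal-crossing cluster $\chi$ there. Fix a passage site $v^{\ast} \in \gamma \cap \Ball_r(u)$, so that by definition $v^{\ast} \in \ol{\cluster^{(p)}(w)}$ and $\sigma_{v^{\ast}} = 1$. Since $\gamma$ has both endpoints outside $\Ball_R(u)$ and passes through $v^{\ast}$ inside $\Ball_r(u)$, the intersection $\gamma \cap \Ann_{r,R}(u)$ contains two disjoint sub-arcs $\gamma_{\rm in}$ and $\gamma_{\rm out}$, each crossing the annulus from $\din \Ball_r(u)$ to the outside of $\Ball_R(u)$. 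Along these sub-arcs there are no passage sites by hypothesis, so every vertex satisfies $\tilde{\omega}^{(p),w} = 1$, i.e.\ lies in $\calD \setminus \ol{\cluster^{(p)}(w)}$.

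The main step is the topological claim that $\ol{\cluster^{(p)}(w)}$ contains two disjoint arms in $\Ann_{r,R}(u)$ from inner to outer boundary. I would first argue that the ends $p_{\rm in}$ and $p_{\rm out}$ of $\gamma_{\rm in}$ and $\gamma_{\rm out}$ on $\din \Ball_r(u)$ lie in distinct connected components of $\calD \setminus \ol{\cluster^{(p)}(w)}$: otherwise, a path $\rho$ joining them in the common component, spliced into $\gamma$ in place of the portion of $\gamma$ between $p_{\rm in}$ and $p_{\rm out}$ (which traverses $\Ball_r(u)$ and contains the passage site $v^{\ast}$), would yield, after removing possible loops, a $\tilde{\omega}^{(p),w,\sigma}$-occupied simple path with the same endpoints as $\gamma$ but strictly fewer enhanced sites, contradicting the geodesic property. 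Hence $\gamma_{\rm in}$ and $\gamma_{\rm out}$ must be separated inside $\Ann_{r,R}(u)$ by $\ol{\cluster^{(p)}(w)} \cap \Ann_{r,R}(u)$. Such a separation of the annular region requires at least two disjoint arms from the inner to the outer boundary: a single arm cannot disconnect an annulus, and a circuit contained in $\ol{\cluster^{(p)}(w)} \cap \Ann_{r,R}(u)$ would be crossed by $\gamma$ as it enters or leaves $\Ball_r(u)$, producing a passage site inside $\Ann_{r,R}(u)$, which is excluded by hypothesis.

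Given two disjoint $\ol{\cluster^{(p)}(w)}$-arms in $\Ann_{r,R}(u)$, the six arms required by $\omega^{(p)} \in \darm_6(\Ann_{r,R}(u))$ can be extracted as in \cite{KMS2015}. The ``core'' of each $\ol{\cluster^{(p)}(w)}$-arm lies in $\cluster^{(p)}(w)$ and is therefore $\omega^{(p)}$-occupied, providing the two occupied arms of $\darm_6$; on each side of each such occupied core, a vacant ``shadow'' in $\dout \cluster^{(p)}(w)$ runs along the arm (since outer-boundary vertices of $\omega^{(p)}$-clusters are automatically $\omega^{(p)}$-vacant), yielding four disjoint $\omega^{(p)}$-vacant arms, two per sector of $\Ann_{r,R}(u)$ cut out by the occupied arms. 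This produces the alternating pattern $(ovvovv)$ around the annulus with all six arms pairwise disjoint, and the defect of size at most~$2$ on one occupied arm is built in exactly as in the original argument, to accommodate the local geometry at $v^{\ast}$ (in particular the case $v^{\ast} \in \dout \cluster^{(p)}(w)$).

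The main obstacle, as I see it, is making the topological step above fully rigorous in the discrete planar setting and carefully ruling out the degenerate configurations of $\ol{\cluster^{(p)}(w)} \cap \Ann_{r,R}(u)$ (a single arm from the inner to the outer boundary, a circuit, or arms that fail to reach one of the two boundaries or share a vertex there). Once this is handled via the geodesic property and the no-passage-site hypothesis in $\Ann_{r,R}(u)$, the extraction of the six disjoint arms from two disjoint $\ol{\cluster^{(p)}(w)}$-crossings of the annulus is essentially a direct transcription of the last part of the proof of \cite[Lemma~16(i)]{KMS2015}.
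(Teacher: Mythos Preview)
Your proposal is correct and follows essentially the same approach as the paper, both adapting Lemma~16(i) of \cite{KMS2015}. The paper's presentation differs only in organization: it extracts the maximal subpath $\bar\gamma$ of $\gamma$ contained in $\Ball_R(u)$ and uses it to partition $\Ann_{r,R}(u)$ into two regions $A^{\textrm{L}}$ and $A^{\textrm{R}}$, then finds three arms (one $\omega^{(p)}$-occupied from $\cluster^{(p)}(w)$, two $\omega^{(p)}$-vacant from its outer boundary) in each half; your global separation argument in $\calD$ reaches the same conclusion, and the topological difficulties you flag are exactly those that the local partition by $\bar\gamma$ sidesteps.
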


\begin{proof}[Proof of Lemma~\ref{lem:det_6arms}]
This result follows from very similar arguments as for Lemma~16(i) in \cite{KMS2015}. We sketch them briefly, and we refer the reader to Figure~\ref{fig:geodesics} for an illustration.

First, we choose (in an arbitrary way) one passage site $v$ of $\gamma$ in $\Ball_r(u)$, and we extract from $\gamma$ the maximal sub-path $\bar{\gamma}$ which is completely contained in $\Ball_R(u)$, and such that $v \in \bar{\gamma}$. Note that $\bar{\gamma}$ is also a geodesic, and it gives rise to a partition of $A = \Ann_{r, R}(u)$ into two regions, that we can denote by $A^{\text{L}}$ and $A^{\text{R}}$.

In the same way as for Lemma~16(i) in \cite{KMS2015}, the passage site $v$ yields six disjoint arms, three inside each of $A^{\text{L}}$ and $A^{\text{R}}$: one $\omega^{(p)}$-occupied arm, and two $\omega^{(p)}$-vacant ones (both obtained from the outer boundary of the cluster of the occupied arm). However, one should be careful that one of the two occupied arms may have a defect, with size at most $2$ (exactly as in the definition of the event $\darm_6(A)$).
\end{proof}

\begin{figure}[t]
\begin{center}

\includegraphics[width=.65\textwidth]{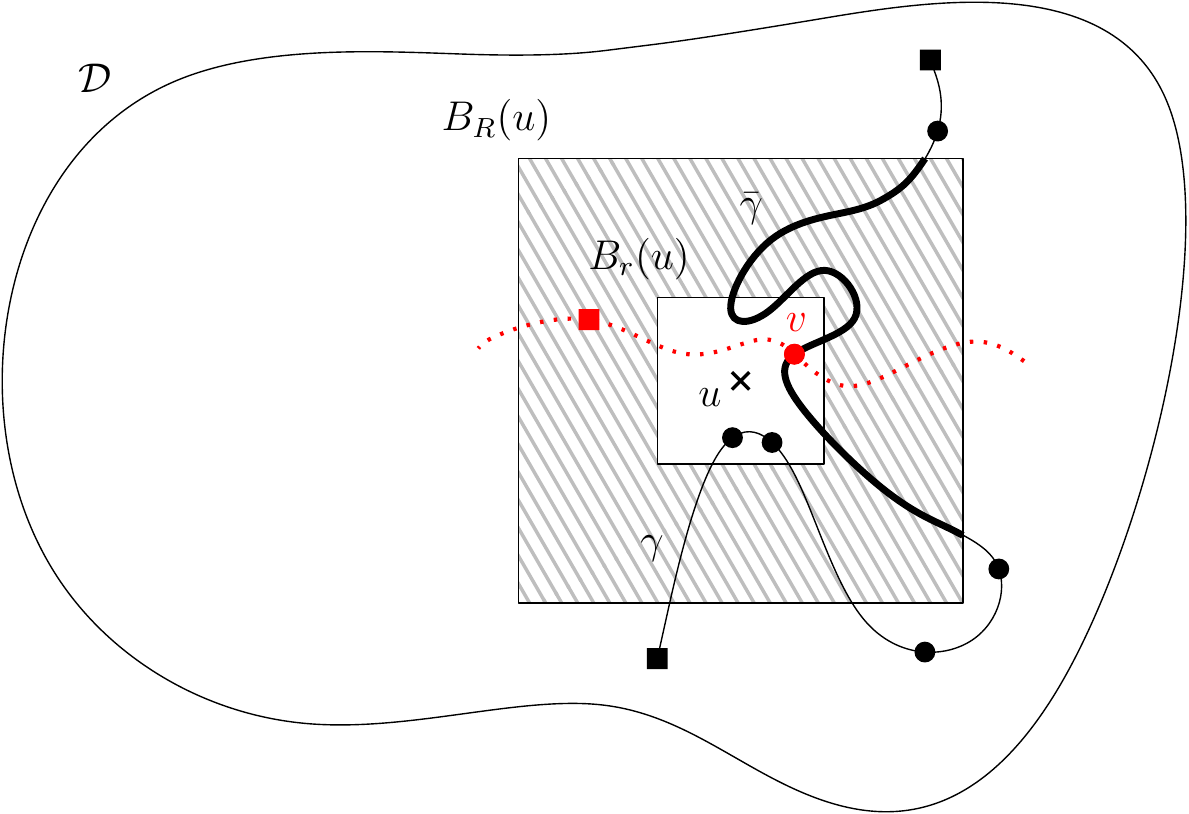}
\caption{\label{fig:geodesics} The path $\gamma$ is a geodesic, and both of its endpoints (shown as black squares) lie outside $\Ball_R(u)$. Its passage sites are marked with disks, and at least one such vertex $v$ belongs to $\Ball_r(u)$. The sub-path $\bar{\gamma}$, which contains $v$, is drawn in a thicker line. On the other hand, the annulus $\Ann_{r, R}(u)$, shaded in grey, does not contain any passage site, and $\bar{\gamma}$ subdivides it into two parts. The two red arms are disjoint and they are both included in $\cluster^{(p)}(w)$ (so $\omega^{(p)}$-occupied), except for a potential defect (with size at most $2$) on one of them, shown with a red square.}

\end{center}
\end{figure}

From Lemma~\ref{lem:det_6arms} and the definition of $\marm_6$, we obtain immediately the following.

\begin{lemma} \label{lem:det_6arms2}
Let $u \in \calD$, and $R \geq 1$ be such that $\Ball_R(u) \subseteq \calD$. Let $w \in \calD$, $p \in (0,1)$, and let $\gamma$ be a geodesic in $\calD$ with respect to $\tilde{\omega}^{(p), w}$ and $\sigma$, of which both endpoints lie outside $\Ball_R(u)$. If for some $0 \leq r < R$, $\Ball_r(u)$ contains a passage site of $\gamma$, then $(\omega, \sigma) \in \marm_6^{p,p}(\Ann_{r,R}(u))$.
\end{lemma}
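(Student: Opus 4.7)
The plan is to verify the two clauses of Definition~\ref{def:6_arms_modified2} directly, taking for $\calP$ the set of all passage sites of $\gamma$ that lie in $\Ann_{r,R}(u)$. Clause (i), namely $\sigma_{v'} = 1$ for every $v' \in \calP$, is immediate: being a passage site of $\gamma$ forces $\tilde{\omega}^{(p),w}_{v'} = 0$, while the fact that $v'$ lies on the geodesic $\gamma$ gives $\tilde{\omega}^{(p),w,\sigma}_{v'} = 1$, and the identity $\tilde{\omega}^{(p),w,\sigma}_{v'} = \tilde{\omega}^{(p),w}_{v'} \vee \sigma_{v'}$ then forces $\sigma_{v'} = 1$.

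For clause (ii), I will take an arbitrary test center $w' \in \Ball_R(u)$ and radii $0 \leq m_1 < m_2$ with $\Ann_{m_1,m_2}(w') \subseteq \Ann_{r,R}(u)$, $\Ann_{m_1,m_2}(w') \cap \calP = \emptyset$, and either $\Ball_{m_1}(w') \supseteq \Ball_r(u)$ or $\Ball_{m_1}(w') \cap \calP \neq \emptyset$, and apply Lemma~\ref{lem:det_6arms} with $(u,R,r)$ replaced by $(w',m_2,m_1)$. Three small hypotheses of that lemma need to be checked: (a) $\Ball_{m_2}(w') \subseteq \calD$; (b) both endpoints of $\gamma$ lie outside $\Ball_{m_2}(w')$; (c) $\Ball_{m_1}(w')$ contains a passage site of $\gamma$ while $\Ann_{m_1,m_2}(w')$ contains none. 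For (a) and (b), the plan is to first observe that $\Ball_{m_2}(w') \subseteq \Ball_R(u)$: the outer boundary of the square $\Ball_{m_2}(w')$ is contained in $\Ann_{m_1,m_2}(w') \subseteq \Ball_R(u)$, and convexity of the axis-aligned square $\Ball_R(u)$ then forces the whole of $\Ball_{m_2}(w')$ to sit inside it; since $\Ball_R(u) \subseteq \calD$ and the endpoints of $\gamma$ lie outside $\Ball_R(u)$, both (a) and (b) follow. For (c), the presence of a passage site inside $\Ball_{m_1}(w')$ is either immediate from $\Ball_{m_1}(w') \cap \calP \neq \emptyset$, or comes from the passage site in $\Ball_r(u) \subseteq \Ball_{m_1}(w')$ provided by the hypothesis of the lemma; its absence in $\Ann_{m_1,m_2}(w')$ follows because any such site would lie in $\Ann_{r,R}(u)$, hence in $\calP$, contradicting $\Ann_{m_1,m_2}(w') \cap \calP = \emptyset$.

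Once these hypotheses are in place, Lemma~\ref{lem:det_6arms} yields $\omega^{(p)} \in \darm_6(\Ann_{m_1,m_2}(w'))$, which, by the very definition of the ``fuzzy'' six-arm event, is exactly $\omega \in \darm_6^{p,p}(\Ann_{m_1,m_2}(w'))$, giving clause (ii) and thereby $(\omega,\sigma) \in \marm_6^{p,p}(\Ann_{r,R}(u))$. I expect the only substantive step to be the convexity observation $\Ball_{m_2}(w') \subseteq \Ball_R(u)$; the rest is a direct translation between the definitions of $\marm_6^{p,p}$ and $\darm_6^{p,p}$ and the notion of passage site.
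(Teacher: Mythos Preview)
Your proof is correct and is precisely the expansion the paper has in mind when it says the lemma follows ``immediately'' from Lemma~\ref{lem:det_6arms} and the definition of $\marm_6$. Your choice of $\calP$ as the passage sites of $\gamma$ lying in $\Ann_{r,R}(u)$ and the verification of clauses (i) and (ii) of Definition~\ref{def:6_arms_modified2} via Lemma~\ref{lem:det_6arms} (including the convexity step $\Ball_{m_2}(w') \subseteq \Ball_R(u)$) is exactly the intended argument.
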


This suggests to introduce the following event.

\begin{definition} \label{def:geod}
Let $w \in \calD$, $p \in (0,1)$, and $M > 0$. We define $G^{(p)}(w; M) = G^{(p)}(w; M, \calD) := \{$there exists a geodesic in $\calD$ with respect to $\tilde{\omega}^{(p), w}$ and $\sigma$, of which both endpoints lie at a distance $> M$ from $w$, and which contains at least one vertex among $w$ and its neighbors, i.e. in $\{w\} \cup \dout \{w\} \}$.
\end{definition}

\begin{theorem} \label{thm:geod}
For all $K \geq 1$, there exist $\deltaKMS = \deltaKMS(K) > 0$ and $C = C(K) > 0$ such that: for all $p \in (0,1)$, $2 \leq M \leq K L(p)$, and $w \in \calD$ with $\Ball_M(w) \subseteq \calD$,
\begin{equation}
\PP_{\deltaKMS}\big( G^{(p)}(w; M, \calD) \big) \leq C \cdot \log M \cdot \pi_6(M).
\end{equation}
\end{theorem}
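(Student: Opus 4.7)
The plan is to deduce Theorem~\ref{thm:geod} by combining the deterministic geometric input of Lemma~\ref{lem:det_6arms2} with the near-critical six-arm stability bound of Theorem~\ref{thm:modif_6_arms2}; no new estimate beyond what has already been developed in this section should be needed.

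\emph{Step 1: forcing a passage site at $w$.} I suppose $G^{(p)}(w; M, \calD)$ occurs, and fix a witnessing geodesic $\gamma$ passing through some $v \in \{w\} \cup \dout \{w\}$, with both endpoints at distance $> M$ from $w$. In the only case which contributes non-trivially to the bound, $w$ is itself $p$-occupied, so that $\{w\} \cup \dout \{w\} \subseteq \ol{\cluster^{(p)}(w)}$ and consequently $\tilde{\omega}^{(p), w}_v = 0$. Since $v$ lies on $\gamma$, one has $\tilde{\omega}^{(p), w, \sigma}_v = 1$, which forces $\sigma_v = 1$, and hence $v$ is a passage site of $\gamma$ in the sense of Section~\ref{sec:geodesics}.

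\emph{Step 2: extracting a modified six-arm event.} I apply Lemma~\ref{lem:det_6arms2} with $u = w$, $r = 1$, and $R = M$ (possible since $M \geq 2$). The hypothesis $\Ball_R(u) \subseteq \calD$ is exactly $\Ball_M(w) \subseteq \calD$ from the statement of the theorem, the endpoints of $\gamma$ lie outside $\Ball_M(w)$ by construction, and $\Ball_1(w) = \{w\} \cup \dout \{w\}$ contains the passage site $v$ identified above. The lemma then yields $(\omega, \sigma) \in \marm_6^{p,p}(\Ann_{1, M}(w))$.

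\emph{Step 3: applying the stability bound and concluding.} I take $\deltaKMS = \deltaKMS(K)$ and $C = C(K)$ as supplied by Theorem~\ref{thm:modif_6_arms2} (used with $p' = p$, so that both constraints $n_2 \leq K L(p)$ and $n_2 \leq K L(p')$ reduce to the single assumption $M \leq K L(p)$). This gives
\begin{equation*}
\PP_{\deltaKMS}\bigl(\marm_6^{p,p}(\Ann_{1, M}(w))\bigr) \;\leq\; C\bigl(1 + \log M\bigr)\, \pi_6(1, M).
\end{equation*}
To bring the right-hand side into the desired form, I use quasi-multiplicativity \eqref{eq:quasi_mult} at the scales $0 < 1 < M$ together with the fact that $\pi_6(0, 1)$ is a universal positive constant on $\TT$, to obtain $\pi_6(1, M) \leq C' \pi_6(0, M) = C' \pi_6(M)$. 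Combining with $1 + \log M \leq 2 \log M$ (valid for $M \geq 2$) and absorbing constants produces the claimed inequality $\PP_{\deltaKMS}\bigl( G^{(p)}(w; M, \calD) \bigr) \leq C \cdot \log M \cdot \pi_6(M)$.

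The only real subtlety is the conceptual one isolated in Step 1: one has to read the definition of $G^{(p)}(w; M, \calD)$ as implicitly placing us on the event that $w$ lies in a burnt cluster (i.e.\ that $w$ is $p$-occupied), since this is exactly the regime in which the presence on the geodesic of some $v \in \{w\} \cup \dout \{w\}$ automatically upgrades to the presence of a passage site in $\Ball_1(w)$, and thus triggers the six-arm picture of Lemma~\ref{lem:det_6arms2}. Once this observation is made, the bound is a direct plug-in of the two earlier results and I do not anticipate any further obstacle, beyond the routine bookkeeping of the $K$-dependence in $\deltaKMS$ and $C$.
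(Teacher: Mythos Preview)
Your proposal is correct and follows the same route as the paper: feed the geodesic into Lemma~\ref{lem:det_6arms2} to produce a modified six-arm event, then invoke Theorem~\ref{thm:modif_6_arms2}. The only cosmetic difference is that the paper centers the resulting annulus at the passage site $v$ (with inner radius $0$) and union-bounds over the at most seven choices of $v \in \{w\} \cup \dout\{w\}$, whereas you center at $w$ with inner radius $1$; your explicit remark that $v$ is a passage site only when $w$ is $p$-occupied is a point the paper simply asserts without comment.
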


\begin{proof}[Proof of Theorem~\ref{thm:geod}]
Assume that the event $G^{(p)}(w; M, \calD)$ occurs. Then by definition, there is a geodesic $\gamma$ (with respect to $\tilde{\omega}^{(p), w}$ and $\sigma$) in $\calD$ with both endpoints at a distance $> M$ from $w$, and containing a vertex in $\{w\} \cup \dout \{w\}$. Consider such a vertex $v$ on $\gamma$, which is thus a passage site of $\gamma$. Hence, by Lemma~\ref{lem:det_6arms2}, the event $\marm_6^{p,p}(\Ann_{0,M}(v))$ occurs. We can now conclude by using Theorem~\ref{thm:modif_6_arms2} (and the fact that $|\dout \{w\}| \leq 6$).
\end{proof}

For applications, we also need (a form of) the following result, which slightly generalizes the previous theorem.

\begin{theorem} \label{thm:geod2}
For all $K \geq 1$, there exist $\deltaKMS = \deltaKMS(K) > 0$ and $C = C(K) > 0$ such that: for all $0 < p < p' < 1$, $2 \leq M \leq K (L(p) \wedge L(p'))$, and $w \in V$,
\begin{align*}
\PP_{\deltaKMS}\big( \text{there exist a domain } \calD \supseteq \Ball_M(w) \text{ and some } \tilde{p} \in (p, p') & \text{ s.t. } G^{(\tilde{p})}(w; M, \calD) \text{ holds}  \big)\\[1mm]
& \leq C \cdot \log M \cdot \pi_6(M).
\end{align*}
\end{theorem}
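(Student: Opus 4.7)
The plan is to follow the template of the proof of Theorem~\ref{thm:geod}: produce a modified six-arm event from a passage site of the geodesic via Lemma~\ref{lem:det_6arms2}, and then invoke Theorem~\ref{thm:modif_6_arms2}. The novelty compared to Theorem~\ref{thm:geod} is that the left-hand side involves an \emph{uncountable} union over $\tilde p\in(p,p')$ and another large union over the admissible domains $\calD\supseteq\Ball_M(w)$. The main idea will be that both unions can be absorbed into a \emph{single} modified six-arm event whose parameters do not depend on $\tilde p$ or on $\calD$.

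Concretely, suppose that $G^{(\tilde p)}(w;M,\calD)$ holds for some admissible pair $(\calD,\tilde p)$. Then the geodesic $\gamma$ passes through a vertex $v\in\{w\}\cup\dout\{w\}$, and $v$ is automatically a passage site, because $v\in\ol{\cluster^{(\tilde p)}(w)}$ forces $\tilde\omega^{(\tilde p),w}_v=0$. The endpoints of $\gamma$ lie at distance $>M$ from $w$, hence at distance $>M-1$ from $v$; together with the inclusion $\Ball_{M-1}(v)\subseteq\Ball_M(w)\subseteq\calD$, this lets me apply Lemma~\ref{lem:det_6arms2} with $u=v$, $r=0$, $R=M-1$, yielding $(\omega,\sigma)\in\marm_6^{\tilde p,\tilde p}(\Ann_{0,M-1}(v))$. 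Crucially, this event depends only on $(\omega,\sigma)$ in $\Ball_{M-1}(v)$ and makes no reference to $\calD$, so the supremum over $\calD$ disappears for free.

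To eliminate $\tilde p$, I would use the monotonicity inclusion $\marm_6^{\tilde p,\tilde p}(A)\subseteq\marm_6^{p',p}(A)$, valid for every $\tilde p\in(p,p')$. Indeed, $\omega\leq\tilde p$ implies $\omega\leq p'$ (so ``$\tilde p$-occupied'' is in particular ``$p'$-occupied'') and $\omega>\tilde p$ implies $\omega>p$ (so ``$\tilde p$-vacant'' is in particular ``$p$-vacant''); hence the same witness set $\calP$ satisfies the conditions for $\marm_6^{p',p}$, all six arms and the possible defect included. A union bound over the at most $7$ choices of $v$, followed by Theorem~\ref{thm:modif_6_arms2} applied to the single event $\marm_6^{p',p}(\Ann_{0,M-1}(v))$ (the constraint $M-1\leq K(L(p')\wedge L(p))$ is met, possibly after harmlessly enlarging $K$), then bounds each term by $C(1+\log(M-1))\pi_6(M-1)$; using \eqref{eq:extendability} this is comparable to $C'\log M\cdot\pi_6(M)$, giving the claim.

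I do not expect any serious obstacle. The $\calD$-supremum vanishes by locality, and the $\tilde p$-supremum is resolved by the soft monotonicity step; no new induction beyond Theorem~\ref{thm:modif_6_arms2} is required, and the rest of the argument is a verbatim copy of the proof of Theorem~\ref{thm:geod}. The only substantive point is the observation that a common witness set $\calP$ works for all $\tilde p\in(p,p')$ once the thresholds are relaxed to $(p',p)$.
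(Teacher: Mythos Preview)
Your proposal is correct and follows essentially the same approach as the paper: reduce to a passage site $v\in\{w\}\cup\dout\{w\}$, apply Lemma~\ref{lem:det_6arms2} to obtain $\marm_6^{\tilde p,\tilde p}$ locally, use the monotonicity inclusion $\marm_6^{\tilde p,\tilde p}\subseteq\marm_6^{p',p}$ to eliminate the dependence on $\tilde p$ (and, by locality, on $\calD$), and finish with Theorem~\ref{thm:modif_6_arms2} and a union bound over the $\leq 7$ choices of $v$. If anything, your write-up is slightly more careful than the paper's in distinguishing $M-1$ from $M$ when $v$ is a neighbor of $w$.
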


\begin{proof}[Proof of Theorem~\ref{thm:geod2}]
As in the proof of Theorem~\ref{thm:geod}, we have that if $G^{(\tilde{p})}(w; M, \calD)$ occurs for some $\tilde{p} \in (p,p')$ (where $\calD$ and $w$ satisfy $\Ball_M(w) \subseteq \calD$), then $\marm_6^{\tilde{p},\tilde{p}}(\Ann_{0,M}(v))$ holds for some $v \in \{w\} \cup \dout \{w\}$ and the same $\tilde{p} \in (p,p')$. Clearly, for such a vertex $v$, the event $\marm_6^{p',p}(\Ann_{0,M}(v))$ holds, so that it suffices to use Theorem~\ref{thm:modif_6_arms2} again (and the fact that the number of neighbors of $w$ is smaller than some universal constant).
\end{proof}

\section{Evolution of the percolation holes} \label{sec:stab_recov}

We now explain more how to use the results from Section~\ref{sec:positive_rec_time} in the context of forest fires. Percolation holes (introduced in Section~\ref{sec:holes}) play a central role in the proofs of Section~\ref{sec:dec_FF}, and we study their evolution through recoveries in Section~\ref{sec:robust}. We then make the connection with forest fires in Section~\ref{sec:stab_FF}. 

\subsection{Robustness through recoveries} \label{sec:robust}

In this section, we study the effect of recoveries on the strong hole $\holes$. Note that this percolation hole is also, with high probability, the hole obtained when burning all vertices connected to the boundary in a box centered on $0$ with a large enough side length (or, alternatively, when burning the largest cluster in this box). Moreover, this holds true in approximable domains as well.

First, recall the following consequence \eqref{eq:holes_apriori} of Lemma~\ref{lem:bound_holes} on the weak and strong holes: for all $\ve > 0$, there exist $\ol{\lambda} > \ul{\lambda} > 0$ such that
$$\PP_p \big( \Ball_{\ul{\lambda} L(p)} \subseteq \holes \subseteq \holew \subseteq \Ball_{\ol{\lambda} L(p)} \big) \geq 1 - \ve$$
for all $p > p_c$ sufficiently close to $p_c$.

As already mentioned, the successive holes around $0$ play an important role when studying the dynamics of the frozen percolation and forest fire processes. In the latter case, the hole produced by a burning may be modified, before the next burning takes place inside it, by subsequent recoveries. More specifically, some vacant vertices (e.g. vertices which just burnt, or neighbors of such vertices) may become occupied, which can have the effect of enlarging the hole. This would affect the time at which the next burning in the hole occurs (this time being decreased).

We now prove that it is likely that such modifications of the hole are in fact very small, in a sense that we make precise. Furthermore, the reasonings even show that if, after the first burning, we stop the ignitions so that there is only recovery, the resulting modified hole (which is potentially even larger) still differs very little from $\holes$ after some ``macroscopic'' time $\deltaKMS > 0$.

For any configuration $\sigma = (\sigma_v)_{v \in V} \in \{0,1\}^V$, we introduce a modification of $\holes$ associated with $\sigma$. It is obtained by adding to $\holes$ all vertices in $V \setminus \holes$ which can be reached from $\holes$ by an ``appropriate'' path, of which all vertices belonging to $\Cinf \cup \dout \Cinf$ have been ``recovered'', i.e. have $\sigma$-value $1$. Note that the actual hole after recovery is smaller than this modification, since typically many vertices in $V \setminus (\holes \cup \Cinf \cup \dout \Cinf)$ have both $\sigma$-value $0$ and $\omega$-value $0$.

Let us give more precise definitions.

\begin{definition} \label{def:hole_recov}
Let $\omega$, $\sigma \in \{0,1\}^V$. First, the configurations $\tilde{\tilde{\omega}}$ and $\tilde{\tilde{\omega}}^{\sigma}$ are defined as follows: for each $v \in V$, we let
$$\tilde{\tilde{\omega}}_v :=
\begin{cases}
0 & \text{ if } v \in \ol{\Cinf} := \Cinf \cup \dout \Cinf,\\[1mm]
1 & \text{ otherwise,}
\end{cases}$$
and $\tilde{\tilde{\omega}}_v^{\sigma} := \tilde{\tilde{\omega}}_v \vee \sigma_v$. The \emph{$\sigma$-modified} strong hole $\holes^{\sigma}$ $(\supseteq \holes)$ is then obtained as
\begin{equation} \label{eq:def_hole_recov}
\holes^{\sigma} := \holes \cup \Big\{ v \in V \setminus \holes \: : \: v \stackrel{\tilde{\tilde{\omega}}^{\sigma}}{\llra} \holes \Big\}.
\end{equation}
\end{definition}

Note that in the above definition, all vertices in $\holes$ have, in particular, $\tilde{\tilde{\omega}}$-value $1$ (from the definition of $\holes$).

Since the $6$-arm exponent is $> 2$ (see \eqref{eq:6arm}), we can fix a universal $\upsilon < 1$ such that
\begin{equation} \label{eq:choice_upsilon}
n^2 \cdot \log n \cdot \pi_6(n^{\upsilon}) \stackrel[n \to \infty]{}{\longrightarrow} 0
\end{equation}
(actually, the exact value $\alpha_6 = \frac{35}{12}$ is known on $\TT$ \cite{SW2001}, from computations using the connection between critical percolation and the Schramm-Loewner Evolution process with parameter $6$, which shows that $\upsilon$ can be chosen arbitrarily in the interval $(\frac{24}{35},1)$). We then take
\begin{equation} \label{eq:def_M}
M = M(p) := L(p)^{\upsilon}
\end{equation}
($\ll L(p)$ as $p \searrow p_c$). We show that with high probability, $\holes^{\sigma}$ is contained in the $M$-thickening of $\holes$, i.e.
\begin{equation} \label{eq:hole_thick}
\big[ \holes \big]^{(M)} := \holes \cup \Big\{ z \in \CC \setminus \holes \: : \: d(z, \holes) \leq M \Big\}
\end{equation}
(as defined in \eqref{eq:def_thick}), where, as before, $d$ is the distance induced by $\|.\|$. Recall that $\holes$ has typically a radius of order $L(p)$ (see Lemma~\ref{lem:bound_holes}), so that this thickening can be considered as a negligible perturbation of $\holes$, as $p \searrow p_c$.

We actually prove the slightly more uniform version below. Recall that $\dul{p}^{(K)} \geq \dol{p}^{(K)} > p_c$ were defined in \eqref{eq:def_pK}, for $p > p_c$ and $K \geq 1$.

\begin{theorem} \label{thm:M_thick}
There exist universal constants $\deltaKMS > 0$ and $\upsilon \in (0,1)$ such that the following holds. If $\sigma = (\sigma_v)_{v \in V}$ is independent of $\omega$, and $\sigma \sim \PP_{\deltaKMS}$, then for all $K \geq 1$, we have
\begin{equation} \label{eq:M_thick}
\PP_{\deltaKMS} \big(\text{for all } \tilde{p} \in [\dol{p}^{(K)}, \dul{p}^{(K)}], \: \holes(\tilde{p})^{\sigma} \subseteq \big[ \holes(\tilde{p}) \big]^{(M)} \big) \stackrel[p \searrow p_c]{}{\longrightarrow} 1
\end{equation}
with $M = L(p)^{\upsilon}$.
\end{theorem}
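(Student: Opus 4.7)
The plan is to convert the bad event $\{\holes(\tilde p)^{\sigma} \nsubseteq [\holes(\tilde p)]^{(M)}\}$ into the existence, near $\din \holes(\tilde p)$, of a geodesic reaching distance of order $M$ in the sense of Section~\ref{sec:geodesics}, and then to apply Theorem~\ref{thm:geod2} together with a union bound over centers $w$.

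First I would localize. Applying \eqref{eq:holes_apriori} at the two endpoints $\dol{p}^{(K)}$ and $\dul{p}^{(K)}$ (whose characteristic lengths are of the same order as $L(p)$), and using monotonicity under the standard coupling (as $\tilde p$ increases, $\Cinf(\tilde p)$ grows so $\holes(\tilde p)$ shrinks), I obtain the sandwich $\Ball_{\ul\lambda L(p)} \subseteq \holes(\tilde p) \subseteq \Ball_{\ol\lambda L(p)}$ uniformly over $\tilde p \in [\dol{p}^{(K)}, \dul{p}^{(K)}]$ on an event of probability at least $1-\ve$, for constants $0 < \ul\lambda < \ol\lambda$ depending on $\ve$ and $K$. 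Set $R := 2 \ol\lambda L(p)$ and $\calD := \Ball_{4R}$.

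Next, suppose the bad event occurs for some such $\tilde p$. By \eqref{eq:def_hole_recov} there is a $\tilde{\tilde\omega}(\tilde p)^{\sigma}$-occupied path $\gamma$ from some $u_0 \in \holes(\tilde p)$ to a vertex $v^*$ with $d(v^*, \holes(\tilde p)) > M$; take $\gamma$ to minimize the number of passage sites. Its first passage site $w_1$ lies in $\dout\holes(\tilde p) \subseteq \Cinf(\tilde p) \cup \dout\Cinf(\tilde p)$; by the three-arms observation of Remark~\ref{rem:arms_holes} I can pick $w \in \Cinf(\tilde p) \cap (\{w_1\} \cup \dout\{w_1\})$. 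Splicing $\gamma$ with a path inside $\holes(\tilde p)$ ending at $0$ produces a $\tilde{\tilde\omega}(\tilde p)^{\sigma}$-occupied path whose two endpoints $0$ and $v^*$ both lie at distance $\geq M/2$ from $w$: the former because $d(w,0) \geq \ul\lambda L(p) \gg M$, the latter because $d(v^*, \holes) > M$ and $d(w, \holes) \leq 2$. Provided the cluster $\cluster^{(\tilde p)}(w; \calD)$ coincides with $\Cinf(\tilde p) \cap \calD$, one has $\tilde\omega^{(\tilde p), w} = \tilde{\tilde\omega}(\tilde p)$ throughout $\calD$; replacing the extended path by a minimizer of its passage sites (with the same endpoints) then yields a Section~\ref{sec:geodesics}-geodesic witnessing $G^{(\tilde p)}(w; M/2, \calD)$.

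The required cluster agreement is obtained on a high-probability event via Lemma~\ref{lem:exist_net} applied at a mesh $\kappa$ of order $L(p)$ inside $\calD$: the resulting net forces any $\omega^{(\tilde p)}$-occupied vertex in $\Ball_{3R}$ to belong to the dominant cluster of $\calD$, so that $\cluster^{(\tilde p)}(w; \calD) \cap \Ball_{3R} = \Cinf(\tilde p) \cap \Ball_{3R}$; uniformity in $\tilde p$ follows from the monotone coupling and the bound \eqref{eq:exist_net}. Once this is in hand, since $M = L(p)^{\upsilon} \leq L(\tilde p)$ for $p$ close enough to $p_c$, Theorem~\ref{thm:geod2} applies with a constant $K' = 1$, and the resulting constants $\deltaKMS, C > 0$ are universal (independent of the $K$ in Theorem~\ref{thm:M_thick}). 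Summing over $w \in \calD \cap V$:
$$
\PP_{\deltaKMS}\bigl(\text{bad event}\bigr) \leq \ve + C \, |\calD \cap V| \cdot \log M \cdot \pi_6(M/2) = \ve + O\bigl(L(p)^2 \, \log L(p) \cdot \pi_6(L(p)^{\upsilon})\bigr),
$$
and the second term tends to $0$ as $p \searrow p_c$ by the defining property \eqref{eq:choice_upsilon} of $\upsilon$. Letting $\ve \to 0$ gives the theorem.

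The main obstacle is the transition between our $\tilde{\tilde\omega}^{\sigma}$-path (whose ``zeroed'' vertices are those of the genuinely infinite cluster $\Cinf$ and its outer boundary) and a Section~\ref{sec:geodesics}-geodesic (whose zeroed vertices are those of the finite-domain cluster $\cluster^{(\tilde p)}(w;\calD)$). Building a single high-probability event on which $\cluster^{(\tilde p)}(w;\calD) \cap \Ball_{3R} = \Cinf(\tilde p) \cap \Ball_{3R}$ simultaneously for every admissible $\tilde p$ and every candidate center $w$, so that a clean union bound is possible, is the delicate point; the net argument sketched above handles it, but must be synchronized with the sandwich from the localization step.
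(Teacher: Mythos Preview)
Your approach is essentially the paper's: localize the hole via \eqref{eq:holes_apriori}, manufacture a geodesic near $\dout\holes(\tilde p)$, invoke Theorem~\ref{thm:geod2}, and union-bound over centers $w$. Two points deserve comment.

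First, the paper achieves the identification $\tilde\omega^{(\tilde p),w}=\tilde{\tilde\omega}(\tilde p)$ on the relevant region not via a net but via a single $\dol{p}^{(K)}$-occupied circuit in $\Ann_{\ol{\ol\lambda}L(p),\,2\ol{\ol\lambda}L(p)}$: any vertex of $\Cinf(\tilde p)\cap\Ball_{\ol{\ol\lambda}L(p)}$ is connected to infinity through this annulus, hence to the circuit, hence to $w$, all inside $\calD$. Your net claim that ``any $\omega^{(\tilde p)}$-occupied vertex in $\Ball_{3R}$ belongs to the dominant cluster'' is too strong---a net only traps vertices that are already connected to infinity (their path to $\partial\calD$ must meet the net); isolated occupied vertices need not join it. The weaker statement is exactly what you need, and the circuit gives it more cleanly.

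Second, your ``replace the extended path by a minimizer'' step is both unnecessary and hazardous. Unnecessary because the spliced path from $0$ to $v^*$ is \emph{already} a geodesic: any $\tilde{\tilde\omega}^{\sigma}$-occupied competitor from $0\in\holes$ to $v^*$ is itself a competitor in the global minimization that selected $\gamma$ (over all starting points in $\holes$ and all endpoints at distance $>M$ from $\holes$), so it cannot have fewer passage sites. Hazardous because a freshly chosen minimizer has no reason to pass through $\{w\}\cup\dout\{w\}$ for the $w$ you fixed from $w_1$, so it would not witness $G^{(\tilde p)}(w;M/2,\calD)$. The paper makes exactly this observation: its constructed path $\tilde\gamma$ is itself minimal and manifestly contains $v_0\in\dout\{w\}$; you should argue the same rather than re-minimize. (Incidentally, the paper also truncates its path at distance $M$ from $v_0$ rather than following it to $v^*$; this keeps everything inside a fixed ball without having to locate $v^*$.)
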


A close inspection of the proof below shows that the interval $[\dol{p}^{(K)}, \dul{p}^{(K)}]$ is not optimal, and could easily be enlarged. However, this result is enough for all our applications. As a matter of fact, we often do not apply Theorem~\ref{thm:M_thick} directly, but rather in combination with the observation that, although the region under consideration is not infinite (and not exactly known), it is so large that the burnt cluster coincides there with the infinite cluster in the whole lattice (which follows essentially from Lemma~\ref{lem:hole_domain}). This can be justified precisely by the results this theorem is based on (in particular Theorem~\ref{thm:geod2}).

\begin{proof}[Proof of Theorem~\ref{thm:M_thick}]
Let $\ve >0$, and consider $\ol{\lambda}(\ve)$ and $\ul{\lambda}(\ve)$ associated with it through \eqref{eq:holes_apriori}. Hence, for each $\tilde{p} \in [\dol{p}^{(K)}, \dul{p}^{(K)}]$, we have: with a probability at least $1 - \ve$, $\Ball_{\ul{\lambda} L(\tilde{p})} \subseteq \holes(\tilde{p}) \subseteq \holew(\tilde{p}) \subseteq \Ball_{\ol{\lambda} L(\tilde{p})}$. In particular, we get from the obvious monotonicity properties (in $\tilde{p}$) of the holes, \eqref{eq:def_pK}, and the union bound, that
\begin{equation} \label{eq:bound_holes_unif}
\PP_{\deltaKMS} \big(\text{for all } \tilde{p} \in [\dol{p}^{(K)}, \dul{p}^{(K)}], \: \Ball_{\ul{\lambda} K^{-1} L(p)} \subseteq \holes(\tilde{p}) \subseteq \holew(\tilde{p}) \subseteq \Ball_{\ol{\lambda} K L(p)} \big) \geq 1 - 2 \ve.
\end{equation}
We assume it to be the case in the remainder of the proof, and we write $\ul{\ul{\lambda}} := \ul{\lambda} K^{-1}$ and $\ol{\ol{\lambda}} := \ol{\lambda} K$. In addition, we consider the finite domain
$$\calD := \Ball_{2 \ol{\ol{\lambda}} L(p)}.$$
We can also require $\ol{\lambda}$ to be large enough so that with a probability at least $1 - \ve$, the event $\circuitevent(\Ann_{\ol{\ol{\lambda}} L(p), 2 \ol{\ol{\lambda}} L(p)})$ occurs at $\dol{p}^{(K)}$ (which is assumed in the following). In particular, this event implies that at each $\tilde{p} \in [\dol{p}^{(K)}, \dul{p}^{(K)}]$, all vertices in $\Cinf \cap \Ball_{\ol{\ol{\lambda}} L(p)}$ belong to the same occupied cluster in $\calD$.

\begin{figure}[t]
\begin{center}

\includegraphics[width=.85\textwidth]{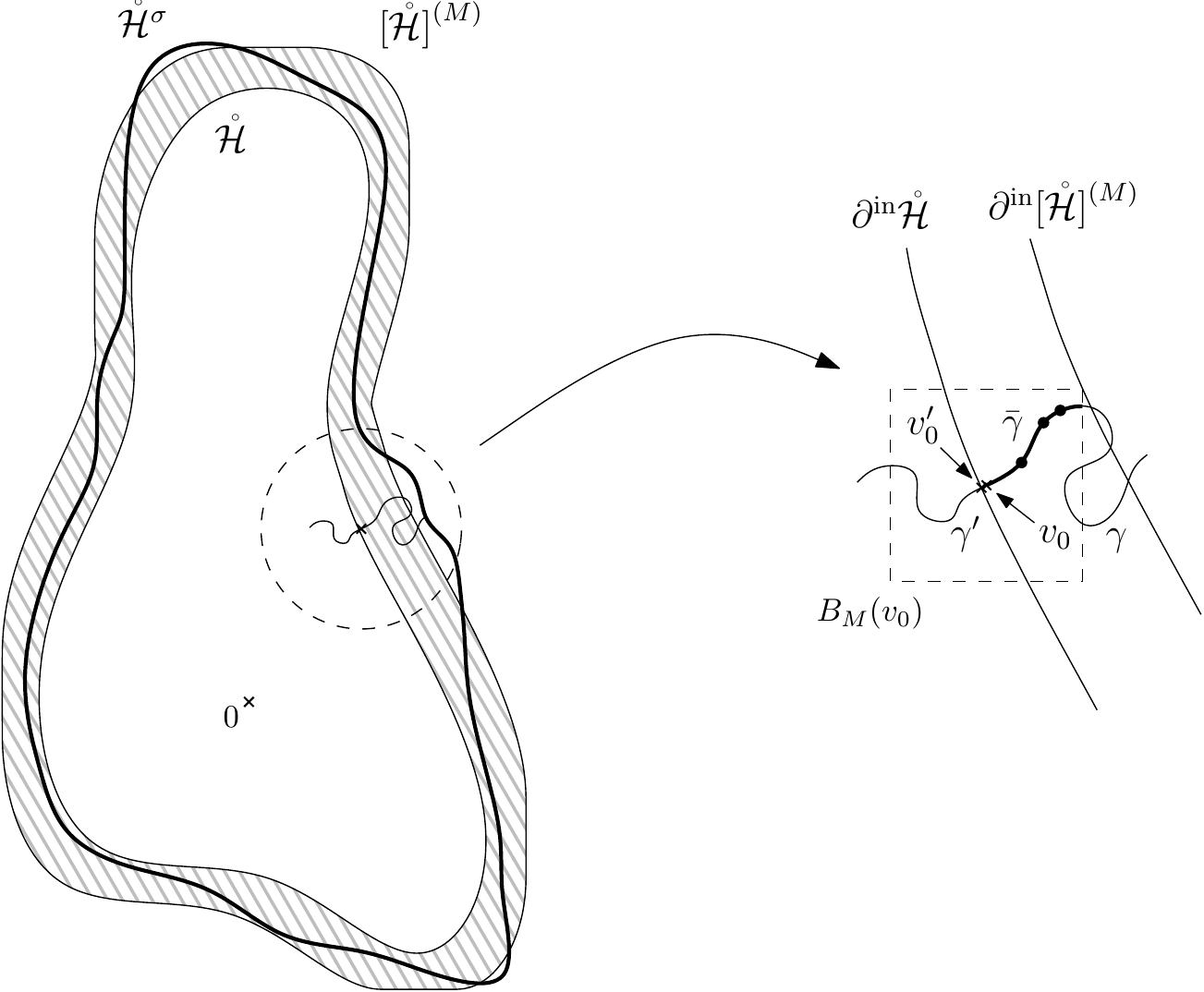}
\caption{\label{fig:thickening} This figure shows the strong hole $\holes$, together with its $M$-thickening $\big[ \holes \big]^{(M)}$ (with respect to the distance induced by $\|.\|$), and its enlargement $\holes^{\sigma}$ after partial recovery (specified by $\sigma \sim \PP_{\deltaKMS}$). The path $\gamma$ is $\tilde{\tilde{\omega}}^{\sigma}$-occupied, with a minimal number of vertices having $\tilde{\tilde{\omega}}$-value $0$ (marked with disks), and we denote by $v_0$ ($\in \dout \holes$) its starting vertex. The path $\gamma'$ is entirely contained in $\holes$, so in particular $\tilde{\tilde{\omega}}$-occupied, and it ends at some vertex $v'_0$ on $\din \holes$, with $v'_0 \sim v_0$.}

\end{center}
\end{figure}

Suppose that for some $\tilde{p} \in [\dol{p}^{(K)}, \dul{p}^{(K)}]$, $\holes(\tilde{p})^{\sigma}$ is not contained in $\big[ \holes(\tilde{p}) \big]^{(M)}$. We fix such a $\tilde{p}$, and we use it to find a geodesic, as considered in Section~\ref{sec:geodesics} (the construction is depicted on Figure~\ref{fig:thickening}). In the remainder of the proof, we often drop the dependence of $\holes$ on $\tilde{p}$, in order to simplify notation.

First, from \eqref{eq:def_hole_recov} and \eqref{eq:hole_thick}, there exists necessarily an $\tilde{\tilde{\omega}}^{\sigma}$-occupied path from $\dout \holes$ to a vertex at a distance at least $M$ from $\holes$. Let $\gamma$ be such a path having a minimal number of ``enhanced'' vertices, i.e. with $\tilde{\tilde{\omega}}$-value $0$, and let $v_0 \in \dout \holes$ be the first vertex along $\gamma$.

We extract from this path the sub-path $\bar{\gamma}$, obtained by following $\gamma$ starting from $v_0$, until one reaches a vertex at a distance at least $M$ from $v_0$. Let $\gamma'$ be a path which lies entirely in $\holes$, starts at a site at a distance at least $M$ from $v_0$, and ends at a neighbor $v'_0$ (in $\holes$) of $v_0$. Note that such a path exists as soon as $\holes \supseteq \Ball_M$, which occurs for all $p$ sufficiently close to $p_c$. Indeed, we assumed that $\Ball_{\ul{\ul{\lambda}} L(p)} \subseteq \holes$, and we chose $M = L(p)^{\upsilon}$ (see \eqref{eq:def_M}), which is $\ll L(p)$ as $p \searrow p_c$ (since $\upsilon < 1$). In addition, $\gamma'$ is automatically $\tilde{\tilde{\omega}}$-occupied (from the earlier observation, below Definition~\ref{def:hole_recov}, that all vertices in $\holes$ have $\tilde{\tilde{\omega}}$-value $1$). Finally, let $\tilde{\gamma}$ be the concatenation of $\gamma'$ and $\bar{\gamma}$, in this order.

We have $v_0 \in \dout \Cinf$, since $v_0$ lies in $\dout \holes$, so we can find a vertex $w$ which is a neighbor of $v_0$ and belongs to $\Cinf$. From the way it was chosen, it is then clear that $\tilde{\gamma}$ is a geodesic with respect to $\tilde{\omega}^{(\tilde{p}), w}$ and $\sigma$. Hence, the event $G^{(\tilde{p})}(w; M, \calD)$ occurs for some $w \in \Ball_{\ol{\ol{\lambda}} L(p)+2}$ and some $\tilde{p} \in [\dol{p}^{(K)}, \dul{p}^{(K)}]$.

We now take $\deltaKMS = \deltaKMS(1)$, in the notation of Theorem~\ref{thm:geod2}. We have, for all $p$ close enough to $p_c$ so that the requirement on $M$ in this result is satisfied (recall from \eqref{eq:def_M} that $M = L(p)^{\upsilon} \ll L(p)$),
\begin{align*}
\PP_{\deltaKMS} \big(\text{for some } & \tilde{p} \in [\dol{p}^{(K)}, \dul{p}^{(K)}], \: \holes(\tilde{p})^{\sigma} \nsubseteq \big[ \holes(\tilde{p}) \big]^{(M)} \big)\\[1mm]
& \leq 3 \ve + \PP_{\deltaKMS} \big( G^{(\tilde{p})}(w; M, \calD) \text{ holds for some } w \in \Ball_{\ol{\ol{\lambda}} L(p)+2} \text{ and } \tilde{p} \in [\dol{p}^{(K)}, \dul{p}^{(K)}] \big)\\[1mm]
& \leq 3 \ve + c_1 \big( \ol{\ol{\lambda}} L(p)+2 \big)^2 \cdot C \cdot \log \big( L(p)^{\upsilon} \big) \cdot \pi_6 \big( L(p)^{\upsilon} \big),
\end{align*}
where Theorem~\ref{thm:geod2} is used in the second line, as well as the bound $|\Ball_{\ol{\ol{\lambda}} L(p)+2}| \leq  c_1 (\ol{\ol{\lambda}} L(p)+2)^2$ for some universal constant $c_1$. From our choice of $\upsilon$ (see \eqref{eq:choice_upsilon}), the second term in the right-hand side tends to $0$ as $p \searrow p_c$ (using that $\ol{\ol{\lambda}}$ depends only on $K$ and $\ve$), so
\begin{equation} \label{eq:M_thick_pf}
\PP_{\deltaKMS} \big(\text{for some } \tilde{p} \in [\dol{p}^{(K)}, \dul{p}^{(K)}], \: \holes(\tilde{p})^{\sigma} \nsubseteq \big[ \holes(\tilde{p}) \big]^{(M)} \big) \leq 4 \ve
\end{equation}
for all $p$ sufficiently close to $p_c$. Since \eqref{eq:M_thick_pf} holds for all $\ve > 0$, this completes the proof of Theorem~\ref{thm:M_thick}.
\end{proof}

\subsection{Persistent barriers in forest fires} \label{sec:stab_FF}

In this section, we explain how our results for percolation with recoveries are applied in Section~\ref{sec:dec_FF}. More specifically, we will use Theorem~\ref{thm:sdp_circ} and Theorem~\ref{thm:M_thick}. Recall that the successive birth times at a vertex $v \in V$ are denoted by $(\tau_v^i)_{i \geq 1}$.

First, we let the configuration $\omega$ depend on $t$, with
\begin{equation} \label{eq:def_omega_FF}
\omega_v(t) := \ind_{\tau_v^1 \leq t} \quad (v \in V, \: t \in [0,\infty)).
\end{equation}
On the other hand, we fix some $\deltaKMS > 0$ small enough so that the conclusions of both Theorem~\ref{thm:sdp_circ} and Theorem~\ref{thm:M_thick} hold, and we let: for each $v \in V$,
\begin{equation} \label{eq:def_sigma_FF}
\sigma_v := \ind_{\exists i \geq 1 \: : \: \tau_v^i \in [t_c, t_c + \deltaKMS]}
\end{equation}
(observe that $\PP(\sigma_v = 1) \leq \deltaKMS$, since births occur at rate $1$). From now on, we stick to this choice for the configurations $\omega$ and $\sigma$, i.e. we always assume that they are given by \eqref{eq:def_omega_FF} and \eqref{eq:def_sigma_FF}.

Clearly, $\omega$ and $\sigma$ are not independent, so that we are not in a position to apply our earlier results directly. However, the weaker assumption that the pairs $(\omega_v, \sigma_v)$, $v \in V$, are independent, mentioned in Remark~\ref{rem:modif_6_arms_indep}, is verified, which is enough to obtain \eqref{eq:sdp_circ} and \eqref{eq:M_thick} (of course, after replacing $\PP_{p_c, \deltaKMS}$ and $\PP_{\deltaKMS}$, respectively, by the probability measures produced by \eqref{eq:def_omega_FF} and \eqref{eq:def_sigma_FF}).

\section{Deconcentration for forest fires and proof of main results} \label{sec:dec_FF}

As mentioned in the introduction (in particular, see the discussion in Section~\ref{sec:backgr}, as well as the general outline in Section~\ref{sec:intro_org}), we now harness the deconcentration property established in \cite{BKN2018} for volume-frozen percolation. This is done by applying our results in Sections~\ref{sec:positive_rec_time} and \ref{sec:stab_recov} to control the effect of recoveries. In Section~\ref{sec:dec_iteration}, we prove a version of Theorem~\ref{thm:main_result} for large (but not too large, as a function of $N$) finite domains. Finally, in Section~\ref{sec:dec_proof}, we extend it to arbitrary large enough finite domains, as well as the full lattice. This will complete the proofs of Theorems~\ref{thm:main_result} and \ref{thm:main_result_finite}.

\subsection{Iteration procedure and deconcentration in a finite domain} \label{sec:dec_iteration}

In this section, we extend Section~6 of \cite{BKN2018} to the $N$-parameter forest fire process, by explaining how to handle the difficulties created by recoveries. Since this extension has substantial overlap with \cite{BKN2018}, we only sketch some parts and omit several details, in order to emphasize two new aspects of the arguments. As we just mentioned, these aspects come from the recovery mechanism, which makes the sequence of consecutive holes (denoted by $\tilde{\Lambda}_i$) more difficult to control.

We will use (this is the first aspect) the generalized results from Section~\ref{sec:stab_recov} to get such a control: roughly, the growth of a hole due to recovery is typically ``much smaller'' than the hole itself (see in particular Theorem~\ref{thm:M_thick}). But this still leaves another difficulty, which is the second aspect mentioned above: even though the growth of $\tilde{\Lambda}_i$ is small, it may grow outside the ``idealized'' hole (called $\Delta_i$ in Section~6 of \cite{BKN2018}) which had a nice and natural spatial Markov property, that we referred to as being a \emph{stopping set}. This aspect somewhat perturbs the reasonings in \cite{BKN2018} which are based on identifying successive stopping sets.

Moreover, it is not a priori clear that the recovered part does not contribute too much to the cluster sizes. If this contribution was larger than expected, this would lead to an earlier occurrence of the next burning, and hence a bigger than expected corresponding hole. We will handle this difficulty by using Lemma~4.1 in \cite{BKN2018} in a ``global'' way (i.e. not iteratively, but rather simultaneously from the beginning), thus showing that each $\tilde{\Lambda}_i$ is contained in $\Delta_i$.

All these issues are addressed in the proof of the following result, which is an analog (for $N$-forest fires) of Theorem~6.2 in \cite{BKN2018}. Its statement involves the exceptional scales $(m_k(N))_{k \geq 0}$ defined in Section~\ref{sec:map_next} (see \eqref{eq:def_tk} and \eqref{eq:def_mk}). The requirement $K \in (m_{k+2}(N), m_{k+5}(N))$ is the condition that the starting domain $\Lambda$ is ``neither too small nor too large''. It ensures that the function $\next_N$ can be iterated $k + O(1)$ times.

We want to emphasize that in this section, the time line is $[0,\infty)$, unlike in \cite{BKN2018} where it is $[0,1]$. We use implicitly a translation of the results of that paper to our setting, through the formula $p(t) = 1 - e^{-t}$ from \eqref{eq:relation_p_t}.

\begin{proposition} \label{prop:finite_domain}
Consider the $N$-forest fire process on the triangular lattice $\TT$, and the associated critical time $t_c = t_c(\TT)$. There exists $\deltaKMS > 0$ such that the following holds. For all $\ve > 0$, there exists $\eta = \eta(\ve) > 0$ such that for all $c_2 > c_1 > \alpha > 0$, there exists $k_0 = k_0(\ve, c_1, c_2, \alpha) \geq 1$ such that for all $k \geq k_0$, we have: for all sufficiently large $N$, all $K \in (m_{k+2}(N), m_{k+5}(N))$, and all simply connected $(\alpha K, \eta)$-approximable sets $\Lambda$ with $\Ball_{c_1 K} \subseteq \Lambda \subseteq \Ball_{c_2 K}$,
\begin{equation}
\PP_N^{(\Lambda)} \big( 0 \text{ burns before time } t_c + \deltaKMS \big) < \ve.
\end{equation}
\end{proposition}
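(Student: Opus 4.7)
The plan is to adapt the iterative deconcentration scheme from Section~6 of \cite{BKN2018}, originally established for volume-frozen percolation, to the $N$-forest fire, using the stability results developed in Sections~\ref{sec:positive_rec_time} and \ref{sec:stab_recov} to neutralize the effect of recoveries. The setup is to couple the $N$-forest fire on $\Lambda$ with the idealized volume-frozen percolation on $\Lambda$ (same birth clocks) and to follow both nested sequences of holes around $0$: $\tilde{\Lambda}_0 \supseteq \tilde{\Lambda}_1 \supseteq \ldots$ for the forest fire and $\Delta_0 \supseteq \Delta_1 \supseteq \ldots$ for the idealized process. In the latter, each $\Delta_i$ is a stopping set: conditionally on $(\Delta_i, t_i)$ the configuration inside $\Delta_i$ at time $t_i$ is fresh Bernoulli percolation, the iteration map $\next_N$ predicts $\rad(\Delta_{i+1}) \approx L(\widehat{t_i})$ with $\widehat{t_i} = \next_N(\rad(\Delta_i))$, and the deconcentration theorem of \cite{BKN2018} gives, for $k$ sufficiently large, that the penultimate volume $|\Delta_{k - O(1)}|$ misses the ``critical window'' $[N, CN]$ with probability at least $1 - \ve/2$.

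The core of the argument is a two-sided comparison between the sequences. On the one hand, I would apply Lemma~4.1 of \cite{BKN2018} \emph{globally}, i.e.\ simultaneously at all intermediate scales $m_j(N)$, $j \le k+5$, rather than iteratively: this controls the total volume that recovered sites can contribute to any cluster, ensuring that forest-fire burnings cannot occur appreciably earlier than their frozen counterparts, which yields $\tilde{\Lambda}_i \subseteq \Delta_i$ with high probability. On the other hand, Theorem~\ref{thm:M_thick} (in its near-critical and Markov form described in Section~\ref{sec:stab_FF}) bounds the recovery-induced enlargement of the hole at each step by a tubular layer of width $L(\widehat{t_i})^{\upsilon} \ll L(\widehat{t_i})$, while Theorem~\ref{thm:sdp_circ} guarantees that the burning circuit itself is barely moved. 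Together, and combined with the uniform approximability bound of Lemma~\ref{lem:approx_hole_unif} (this is where the $(\alpha K, \eta)$-approximability hypothesis enters), these give $[\Delta_i]_{(L(\widehat{t_i})^{\upsilon})} \subseteq \tilde{\Lambda}_i \subseteq \Delta_i$ with probability at least $1 - \ve/2$ through all $k + O(1)$ steps.

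Once this sandwiching is in place, the probability that $0$ burns at step $i+1$ of the forest fire equals, up to a $(1 + o(1))$ factor, its counterpart in the frozen process, controlled via the cluster-size estimates of Lemmas~\ref{lem:largest_cluster_quantiles} and \ref{lem:largest_cluster_exp} and the $4$-arm bounds of Section~\ref{sec:percolation}. Summing: the first $k_0$ steps contribute at most $\ve/4$ total (by choosing $k_0$ large), since $0$ sits deep inside each $\Delta_i$ in that range, and the deconcentration statement for the idealized sequence absorbs the remaining steps, yielding $\PP_N^{(\Lambda)}(0 \text{ burns before } t_c + \deltaKMS) < \ve$.

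The main obstacle is the loss of the stopping-set property for the $\tilde{\Lambda}_i$: in the frozen setting one may simply restart the analysis from fresh Bernoulli percolation inside $\Delta_i$ conditionally on $(\Delta_i, t_i)$, whereas in the forest fire the recovered sites inside $\tilde{\Lambda}_i$ carry delicate information about the history outside. The global (rather than iterative) use of Lemma~4.1 of \cite{BKN2018} together with the recovery-stability results of Sections~\ref{sec:positive_rec_time}--\ref{sec:stab_recov} are precisely designed to sidestep this, but maintaining uniform control over all $k + O(1)$ scales simultaneously, with $k$ allowed to be arbitrarily large, is the technically delicate part.
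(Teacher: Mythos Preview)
Your proposal is essentially the paper's own approach: adapt Section~6 of \cite{BKN2018}, use a \emph{global} version of Lemma~4.1 of \cite{BKN2018} (stated here as Lemma~\ref{lem:approx_BCKS_unif}) together with Theorem~\ref{thm:M_thick} (via Lemma~\ref{lem:iteration_thick}) to obtain the two-sided sandwiching of $\tilde{\Lambda}_i$ by $\Delta_i$ and a shrinking of $\Delta_i$, and then transfer the deconcentration of \cite{BKN2018}. Two small imprecisions worth flagging: Theorem~\ref{thm:sdp_circ} and Lemma~\ref{lem:approx_hole_unif} are not used in this proof (they enter only later, in Section~\ref{sec:dec_proof}); and the lower inclusion you state is slightly too strong---the correct sandwiching is $\big[\Dint{\Delta_i}{\hat{\delta}K_i}\big]_{(\hat{\beta}\hat{\delta}K_i)} \subseteq \tilde{\Lambda}_i$, i.e.\ a shrinking by a small but \emph{fixed} fraction of the hole's own scale $K_i$ (coming from the uncertainty in the burning time and the continuity of $\holes$ in $p$), not merely by $L(\widehat{t_i})^{\upsilon}$.
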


\begin{proof}[Proof of Proposition~\ref{prop:finite_domain}]
Consider a level number $k \geq 1$, a starting scale $K \in (m_{k+2}(N), m_{k+5}(N))$, and a domain $\Lambda$ with $\Ball_{c_1 K} \subseteq \Lambda \subseteq \Ball_{c_2 K}$ for some $0 < c_1 < c_2$. We define the sequence $(\Lambda_i)_{i \geq 0}$, which are the successive holes around $0$ for the $N$-parameter frozen percolation process in $\Lambda$, starting from $\Lambda_0 = \Lambda$. We denote by $(t_i)_{i \geq 0}$ the corresponding freezing times. We also introduce the deterministic sequence $(K_i)_{i \geq 0}$, with $K_0 = K$ and $K_{i+1} = \next_N(K_i)$.

We need additional sequences from \cite{BKN2018}. First, we use the sequences $(\Lambda_i^{**})_{i \geq 0}$ and $(t_i^{**})_{i \geq 0}$ of random sets and times, respectively, which ``approximate'' the $\Lambda_i$'s and $t_i$'s (see (6.1) in \cite{BKN2018}). We also make use of another auxiliary sequence $(\Delta_i)_{i \geq 0}$ which is used in \cite{BKN2018} to show that $\Lambda_i^{**}$ indeed approximates $\Lambda_i$. Its construction involves a parameter $\bar{\delta}$: we take $\Delta_0 = \Lambda$, and
\begin{equation} \label{eq:def_Delta}
\Delta_{i+1} = \holes_{\Delta_i}(t_i^-), \quad \text{with $t_i^-$ defined by } |\Delta_i| \theta(t_i^-) = N (1 - \bar{\delta})
\end{equation}
(recall the definition of the strong hole of the origin in a finite domain, given just below \eqref{eq:def_hole_domain}). A useful feature of these sets $\Delta_i$ is that they are stopping sets, i.e. they can be determined by an exploration ``from outside''. Moreover, we introduce the sequence $(\tilde{\Lambda}_i)_{i \geq 0}$ of successive holes ($\tilde{\Lambda}_i$ is the hole immediately after the $i$th burning around $0$) in the $N$-parameter forest fire process under consideration (again, with $\tilde{\Lambda}_0 = \Lambda$). The corresponding times of burning are denoted by $(\tilde{t}_i)_{i \geq 0}$.

In Section~6 of \cite{BKN2018}, it was shown that for arbitrarily small $\bar{\delta}$ (the parameter appearing in the definition of $(\Delta_i)_{i \geq 0}$, see \eqref{eq:def_Delta}), $\hat{\eta}$ and $\hat{\beta}$, there exist $\eta$, $\delta$ and $\hat{\delta}$ such that: if $\Lambda$ is $(\delta K, \eta)$-approximable, then (for $N$ sufficiently large) with high probability, each $\Delta_i$ is $(\hat{\delta} K_i, \hat{\eta})$-approximable, and each $\Lambda_i$ and $\Lambda^{**}_i$, $1 \leq i \leq k$, stays ``close'' to $\Delta_i$ (and hence to each other). More precisely,
\begin{equation} \label{eq:Lambda_Delta_close}
\Big[ \Dint{\Delta_i}{\hat{\delta} K_i} \Big]_{(\hat{\beta} \hat{\delta} K_i)} \subseteq \Lambda_i, \Lambda^{**}_i \subseteq \Delta_i.
\end{equation}

Earlier in \cite{BKN2018} (see Section~5 there), it was shown that if $k$ and $N$ are large, $\Lambda^{**}_k$ and $t^{**}_k$ satisfy a deconcentration property. By \eqref{eq:Lambda_Delta_close}, such a deconcentration then also holds for $\Lambda_k$, which was the key of the proof in \cite{BKN2018} that for $N$-frozen percolation, $\PPP_N^{(\Lambda)}(0 \text{ freezes})$ is very small if $k$ and $N$ are large. We will show that also each $\tilde{\Lambda}_i$, $1 \leq i \leq k$, is close to $\Delta_i$, and hence to $\Lambda^{**}_i$, which will yield a deconcentration property for $\tilde{\Lambda}_k$. For this purpose, we use two results, stated as Lemma~\ref{lem:approx_BCKS_unif} and Lemma~\ref{lem:iteration_thick} below. The first one follows from the now-classical arguments in \cite{BCKS2001} on the size of the largest cluster in a box, while the second one relies on Theorem~\ref{thm:M_thick} about robustness of percolation holes through recoveries.

First, we note that from the proof of Lemma~6.1 in \cite{BKN2018}, we can see that we may assume that
\begin{equation} \label{eq:cond_eta_delta}
1 + \hat{\eta} < \frac{1 - \bar{\delta}/2}{1 - \bar{\delta}}
\end{equation}
(note that this is equivalent to $\hat{\eta} < \frac{\bar{\delta}}{2(1 - \bar{\delta})}$, so it is sufficient that $\hat{\eta} < \frac{\bar{\delta}}{2}$). From now on, we assume that \eqref{eq:cond_eta_delta} indeed holds.

We now introduce yet another parameter $\lambda > 0$, of which the use will become clear later (it is related to Theorem~\ref{thm:M_thick}). We take it small enough so that
\begin{equation} \label{eq:cond_eta_delta_lambda}
(1 + 8 \lambda)(1 + \hat{\eta}) < \frac{1 - \bar{\delta}/2}{1 - \bar{\delta}},
\end{equation}
which is possible thanks to \eqref{eq:cond_eta_delta} above.

We now turn to a uniform version of Lemma~4.1 in \cite{BKN2018} (recall that this lemma is a generalization of \eqref{eq:largest_cluster}, from boxes to approximable domains). For our purpose, we only need the upper bound from this result. For any domain $\calD$ and $t > t_c$, we define $\tilde{t} = \tilde{t}(\calD)$ by
\begin{equation} \label{eq:def_ptildeD}
\frac{1}{(1 + 8 \lambda)(1 + \hat{\eta})} | \calD | \theta(\tilde{t}) = (1 - \bar{\delta}) N.
\end{equation}

\begin{lemma} \label{lem:approx_BCKS_unif}
Let $k \geq 1$, $K \geq 1$. With probability tending to $1$ as $N \to \infty$, the following holds. For all $i \in \{1, \ldots, k\}$, and every domain $\calD \subseteq \Ball_{K \cdot K_i}$ which is the $\lambda \delta K_i$-thickening of a union of $\delta K_i$-blocks, the largest $\tilde{t}$-occupied cluster in $\calD$ has a volume $< N$.
\end{lemma}

\begin{proof}[Proof of Lemma~\ref{lem:approx_BCKS_unif}]
Note that, for any given $K$, $\delta$ and $i$, the number of candidates for $\calD$ is uniformly bounded (for instance, by the number of subsets of a set of, roughly, $\big( \frac{K}{\delta} \big)^2$ blocks). Hence, it suffices to show that for a given $\calD$ which is the $\lambda \delta K_i$-thickening of a union of $\delta K_i$-blocks, as in the statement of the lemma, its largest $\tilde{t}$-occupied cluster has a volume $< N$ with high probability as $N \to \infty$.

This result for a fixed $\calD$ then follows easily from the reasonings in \cite{BCKS2001}, as for Lemma~4.1 in \cite{BKN2018}, since
$$| \calD | \theta(\tilde{t}) = (1 + 8 \lambda)(1 + \hat{\eta}) (1 - \bar{\delta}) N \leq \bigg( 1 - \frac{\bar{\delta}}{2} \bigg) N$$
(where the equality follows from \eqref{eq:def_ptildeD}, and the inequality from \eqref{eq:cond_eta_delta_lambda}). This completes the proof of Lemma~\ref{lem:approx_BCKS_unif}.
\end{proof}

The other lemma that we will use is the following. In part (ii) of it, we use the notion of $\sigma$-modified strong hole defined in \eqref{eq:def_hole_recov}, where $\sigma$ is given by \eqref{eq:def_sigma_FF}.

\begin{lemma} \label{lem:iteration_thick}
Let $C \geq 1$. With probability tending to $1$ as $N \to \infty$, we have that the following holds for each $1 \leq i \leq k$, each domain $\calD$ with $\Ball_{\frac{K_i}{C}} \subseteq \calD \subseteq \Ball_{C K_i}$, and each $t > t_c$ with $L(t) \in \big( \frac{K_{i+1}}{C}, C K_i \big)$:
\begin{enumerate}[(i)]
\item $\holes_{\calD}(t) = \holes(t)$,

\item $\holes(t)^{\sigma} \subseteq \big[ \holes(t) \big]^{(\lambda \hat{\delta} K_{i+1})}$.
\end{enumerate}
\end{lemma}

\begin{proof}[Proof of Lemma~\ref{lem:iteration_thick}]
It suffices to prove the result for a fixed $i \in \{1, \ldots, k\}$. Further, by monotonicity (in the domain $\calD$) of $\holes_{\calD}(t)$, it is enough to prove the result for $\calD = \Ball_{\frac{K_i}{C}}$, so we fix this domain $\calD = \calD(N)$. Further, with $\ol{t} = \ol{t}(N)$ defined by $L(\ol{t}) = C K_{i+1}$, we observe that if there is a $\ol{t}$-occupied circuit in $\Ann_{\frac{K_i}{2C},\frac{K_i}{C}}$ which has a $\ol{t}$-occupied path to $\infty$, then $\holes_{\calD}(t) = \holes(t)$ for all $t$ in the mentioned interval. Finally, this event has a probability tending to $1$ as $N \to \infty$ (since $L(\ol{t}) = C K_{i+1} \ll K_i$), which proves the first part (i).

We now turn to the second part (ii). Again, we only need to prove it for a fixed $i \in \{1, \ldots, k\}$. Since the characteristic lengths of the $t$-values considered (for a fixed $i$) are of the same order (they differ by a factor at most $C^2$ from each other), and since
$$\lambda \hat{\delta} K_{i+1} \asymp L(\ol{t}) \gg (L(\ol{t}))^{\upsilon}$$
(for $\upsilon$ defined in \eqref{eq:choice_upsilon}), the result follows from Theorem~\ref{thm:M_thick} (also using the observations in Section~\ref{sec:stab_FF} about how to apply this theorem to forest fires). The proof is thus complete.
\end{proof}

Lemma~\ref{lem:iteration_thick} (i) immediately implies that with high probability, we have
$$\Delta_{i+1} = \holes(t_i^-).$$
We start with $i=0$. Recall that we have already taken $\Delta_0 = \Lambda_0 = \tilde{\Lambda}_0 = \Lambda_0^{**} = \Lambda$. Clearly, we get $\tilde{\Lambda}_1 = \Lambda_1$ so by Lemma~6.1 of \cite{BKN2018}, we have, with high probability,
\begin{equation} \label{eq:Lambda_tilde_1}
\Big[ \Dint{\Delta_1}{\hat{\delta} K_1} \Big]_{(\hat{\beta} \hat{\delta} K_1)} \subseteq \tilde{\Lambda}_1 \subseteq \Delta_1,
\end{equation}
where (see \eqref{eq:def_Delta})
$$\Delta_1 = \holes(t_0^-), \quad \text{with } |\Delta_0| \theta(t_0^-) = N (1 - \bar{\delta}).$$
From the proof of Lemma~6.1 in \cite{BKN2018}, we also have that w.h.p.,
$$\tilde{\Lambda}_1 = \Lambda_1 = \holes_{\Lambda}(\tilde{t}_0) = \holes(\tilde{t}_0),$$
with $\tilde{t}_0 \in (t_0^-,t_0^+)$, where $t_0^+$ is given by
$$\theta(t_0^+) \bigg| \Big[ \Dint{\Lambda}{\hat{\delta} K_0} \Big]_{(\hat{\beta} \hat{\delta} K_0)} \bigg| = N(1 + \bar{\delta}).$$
We also have
$$L(t_0^-) \asymp L(t_0^+) \asymp K_1.$$
Now, let $\tilde{t}_1$ be the next burning time. We would like to get an analog of \eqref{eq:Lambda_tilde_1}. A difficulty for the forest-fire process (compared with frozen percolation) is that between times $\tilde{t}_0$ and $\tilde{t}_1$, recovery takes place, by which $\tilde{\Lambda}_1$ typically ``expands''. For the analog of the first inclusion in \eqref{eq:Lambda_tilde_1}, this is not a problem: very similar arguments as in \cite{BKN2018} (see around (6.15) there) give that
\begin{equation} \label{eq:Lambda_2}
\tilde{\Lambda}_2 \supseteq \Big[ \Dint{\Delta_2}{\hat{\delta} K_2} \Big]_{(\hat{\beta} \hat{\delta} K_2)}.
\end{equation}
For the analog of the other inclusion in \eqref{eq:Lambda_tilde_1}, we use Lemmas~\ref{lem:approx_BCKS_unif} and \ref{lem:iteration_thick}. First, note that the burning occurs within a ``macroscopic'' time after $\tilde{t}_0$ (before entering the supercritical regime), so in particular before time $t_c + \deltaKMS$. From Lemma~\ref{lem:iteration_thick} (ii), we have that, by then, the domain $\tilde{\Lambda}_1$ together with the extra region due to recovery (i.e. a subset of $\holes(\tilde{t}_0)^{\sigma}$) is w.h.p. contained in $\big[ \holes(\tilde{t}_0) \big]^{(\lambda \hat{\delta} K_1)} = \big[ \tilde{\Lambda}_1 \big]^{(\lambda \hat{\delta} K_1)}$, which by \eqref{eq:Lambda_tilde_1} is contained in $\big[ \Dout{\Delta_1}{\hat{\delta} K_1} \big]^{(\lambda \hat{\delta} K_1)}$. Recall that $t_1^-$ satisfies by definition (see \eqref{eq:def_Delta})
\begin{equation} \label{eq:t_1-}
|\Delta_1| \theta(t_1^-) = N(1 - \bar{\delta}).
\end{equation}
Let $\tilde{t} = \tilde{t}(\calD)$ be as in Lemma~\ref{lem:approx_BCKS_unif}, with
$$\calD = \Big[ \Dout{\Delta_1}{\hat{\delta} K_1} \Big]^{(\lambda \hat{\delta} K_1)}$$
(which contains $\tilde{\Lambda}_1$, together with its recovered area). From that lemma, we get that w.h.p., the largest $\tilde{t}$-occupied cluster in the above domain $\calD$ has volume $< N$. Moreover, by the definition of $\tilde{t}$, combined with \eqref{eq:t_1-}, we have
$$|\Delta_1| \theta(t_1^-) = \frac{1}{(1 + 8 \lambda)(1 + \hat{\eta})} | \calD | \theta(\tilde{t}),$$
so
\begin{equation} \label{eq:Delta_t_1}
\frac{\theta(\tilde{t})}{\theta(t_1^-)} = \frac{|\Delta_1|}{|\calD|} (1 + 8 \lambda)(1 + \hat{\eta}).
\end{equation}
However, we have
\begin{equation} \label{eq:Delta_D_1}
|\calD| = \Big| \Big[ \Dout{\Delta_1}{\hat{\delta} K_1} \Big]^{(\lambda \hat{\delta} K_1)} \Big| \leq (1 + 8 \lambda) \big| \Dout{\Delta_1}{\hat{\delta} K_1} \big| \leq (1 + 8 \lambda) (1 + \hat{\eta}) |\Delta_1|,
\end{equation}
where the first inequality comes from \eqref{eq:thick_bound}, and the second one uses that $\Delta_1$ is $(\hat{\delta} K_1, \hat{\eta})$-approximable. Combining \eqref{eq:Delta_t_1} and \eqref{eq:Delta_D_1} yields $\theta(\tilde{t}) \geq \theta(t_1^-)$, and hence $\tilde{t} \geq t_1^-$. From this, and the earlier observation about the largest $\tilde{t}$-occupied cluster in $\calD$, we get that w.h.p., the largest $t_1^-$-occupied cluster in $\calD$ has volume $< N$, so $\tilde{t}_1 > t_1^-$, and hence (using also Lemma~\ref{lem:iteration_thick} (i))
$$\tilde{\Lambda}_2 \subseteq \holes(t_1^-) = \Delta_2.$$
Together with \eqref{eq:Lambda_2}, this gives
$$\Big[ \Dint{\Delta_2}{\hat{\delta} K_2} \Big]_{(\hat{\beta} \hat{\delta} K_2)} \subseteq \tilde{\Lambda}_2 \subseteq \Delta_2,$$
as desired. Continuing in this way, we get that w.h.p., each $\tilde{\Lambda}_i$ is close to $\Delta_i$, for $i = 1, \ldots, k$, and hence also close to $\Lambda_i$ and $\Lambda_i^{**}$. In particular, we get the analog of the deconcentration result in \cite{BKN2018} (6.21), with $\tilde{\Lambda}_k$ instead of $\Lambda_k^{**}$.

The proof of Theorem~6.2 in \cite{BKN2018} was then completed by applying a result (for $N$-frozen percolation) from \cite{BN2017a}. An alternative way to complete that proof is, roughly speaking, to iterate a few more steps, showing that (with high probability) the origin is eventually in a (non-empty) hole with a radius much smaller than $m_1(N)$ (which is roughly $\sqrt{N}$), and hence that its cluster does not freeze. This alternative way, i.e. adding a few steps (of course, now using Lemmas~\ref{lem:approx_BCKS_unif} and \ref{lem:iteration_thick} at each step, as earlier in this proof), also works for our situation, thus completing the proof of Proposition~\ref{prop:finite_domain}.
\end{proof}

\subsection{Comparison to the full lattice and completion of the proof} \label{sec:dec_proof}

We now prove Theorem~\ref{thm:main_result}, and we will then explain quickly how to get Theorem~\ref{thm:main_result_finite}. The strategy is of the same spirit as that in Section~7 of \cite{BKN2018}; however, special care is (again) needed to deal with the recoveries. We show that the forest fire process produces, at some stage, a very large suitable hole around $0$, so that we are then more or less in the situation of Proposition~\ref{prop:finite_domain}.

We start with a technical lemma which is used several times in the proof.

\begin{lemma} \label{lem:apriori_L_theta}
There exist $C'_1$, $C'_2 > 0$ such that: for all $t_c < t_1 < t_2$,
\begin{equation} \label{eq:apriori_L_theta}
C'_1 \bigg( \frac{L(t_2)}{L(t_1)} \bigg)^{\frac{1}{2}} \leq \frac{\theta(t_1)}{\theta(t_2)} \leq C'_2 \bigg( \frac{L(t_2)}{L(t_1)} \bigg)^{\beta},
\end{equation}
where $\beta > 0$ is from \eqref{eq:1arm}, as well as
\begin{equation} \label{eq:apriori_L}
C'_1 \bigg( \frac{t_1 - t_c}{t_2 - t_c} \bigg)^{\frac{1}{\beta}} \leq \frac{L(t_2)}{L(t_1)} \leq C'_2 \bigg( \frac{t_1 - t_c}{t_2 - t_c} \bigg)^{\frac{1}{2-\beta}}.
\end{equation}
\end{lemma}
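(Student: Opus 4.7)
The strategy is to reduce both estimates to a-priori bounds on one-arm and four-arm events in annuli of the form $\Ann_{L(t_2), L(t_1)}$, using the near-critical asymptotic formulas \eqref{eq:equiv_theta} and \eqref{eq:equiv_L} together with quasi-multiplicativity of arm probabilities \eqref{eq:quasi_mult}. Note first that $p(t) = 1 - e^{-t}$ is strictly increasing in $t$, hence $L$ is strictly decreasing as a function of $t$ on $(t_c, \infty)$; in particular $t_1 < t_2$ implies $L(t_2) < L(t_1)$, so all ratios $L(t_2)/L(t_1)$ below lie in $(0, 1)$.

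For \eqref{eq:apriori_L_theta}, I would apply \eqref{eq:equiv_theta} to write $\theta(t_i) \asymp \pi_1(L(t_i))$ for $i = 1, 2$, and then use quasi-multiplicativity \eqref{eq:quasi_mult} at $p_c$, applied to the triple $0 < L(t_2) < L(t_1)$, to get
$$\pi_1(L(t_1)) \asymp \pi_1(L(t_2)) \cdot \pi_1(L(t_2), L(t_1)), \qquad \text{so} \qquad \frac{\theta(t_1)}{\theta(t_2)} \asymp \pi_1\bigl( L(t_2), L(t_1) \bigr).$$
The two bounds then follow directly from \eqref{eq:1arm} applied at $p = p_c$ (where $L(p_c) = \infty$, so the condition $n_2 \leq L(p)$ is vacuous), giving $\pi_1(L(t_2), L(t_1))$ sandwiched between a multiple of $(L(t_2)/L(t_1))^{1/2}$ and a multiple of $(L(t_2)/L(t_1))^{\beta}$.

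For \eqref{eq:apriori_L}, I would combine \eqref{eq:equiv_L} with the elementary estimate $p(t) - p_c = e^{-t_c}(1 - e^{-(t - t_c)}) \asymp t - t_c$ on any compact subinterval of $[t_c, \infty)$, to obtain $t - t_c \asymp 1 / (L(t)^2 \pi_4(L(t)))$. Taking the ratio and applying quasi-multiplicativity of $\pi_4$ yields
$$\frac{t_1 - t_c}{t_2 - t_c} \asymp \frac{L(t_2)^2}{L(t_1)^2} \cdot \frac{1}{\pi_4(L(t_2), L(t_1))}.$$
Writing $r = L(t_2)/L(t_1) \in (0,1)$, the lower bound on $\pi_4$ from \eqref{eq:4arm} gives $\pi_4(L(t_2), L(t_1)) \geq C r^{2 - \beta}$, hence $(t_1 - t_c)/(t_2 - t_c) \lesssim r^{\beta}$, which rearranges to the first inequality of \eqref{eq:apriori_L}. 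For the matching upper bound on $r$, I would use the trivial inclusion $\arm_4 \subseteq \arm_1$ and the upper bound in \eqref{eq:1arm}, giving $\pi_4(L(t_2), L(t_1)) \leq \pi_1(L(t_2), L(t_1)) \leq C r^{\beta}$, so $(t_1 - t_c)/(t_2 - t_c) \gtrsim r^{2 - \beta}$, yielding the second inequality.

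The only mildly subtle points are bookkeeping: the $\beta$'s appearing in \eqref{eq:1arm} and \eqref{eq:4arm} are a priori distinct, so I would simply take the minimum to present a single $\beta$ in the statement; and the asymptotic equivalences \eqref{eq:equiv_theta}, \eqref{eq:equiv_L}, together with $p(t) - p_c \asymp t - t_c$, have to be used with constants uniform in $t \in (t_c, \infty)$, which is standard since $\theta(t)$ and $L(t)$ are continuous and bounded away from their degenerate limits on any compact subset of $(t_c, \infty)$. No further ingredient beyond the near-critical toolbox of Section~\ref{sec:near_critical_perc} is needed.
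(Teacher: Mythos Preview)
Your proof is correct and follows exactly the route indicated in the paper: for \eqref{eq:apriori_L_theta} you use \eqref{eq:equiv_theta}, \eqref{eq:quasi_mult} and \eqref{eq:1arm}, and for \eqref{eq:apriori_L} you use \eqref{eq:equiv_L}, \eqref{eq:quasi_mult}, \eqref{eq:1arm} and \eqref{eq:4arm}, which is precisely the list of ingredients the paper invokes. Your treatment is in fact more detailed than the paper's own proof (which only names the inputs), and your remarks on reconciling the two $\beta$'s and on uniformity of the $\asymp$ constants away from $t_c$ are appropriate bookkeeping.
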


\begin{proof}[Proof of Lemma~\ref{lem:apriori_L_theta}]
On the one hand, \eqref{eq:apriori_L_theta} can be obtained easily by combining \eqref{eq:equiv_theta}, \eqref{eq:quasi_mult} and \eqref{eq:1arm}. On the other hand, \eqref{eq:apriori_L} can be deduced from \eqref{eq:equiv_L}, \eqref{eq:quasi_mult}, \eqref{eq:1arm} and \eqref{eq:4arm}.
\end{proof}

\begin{proof}[Proof of Theorem~\ref{thm:main_result}]
Let $\ve > 0$, and consider $\eta$ associated with $\frac{\ve}{100}$ through Proposition~\ref{prop:finite_domain}. We then consider some $K = K(\ve)$ large enough, that we explain how to choose toward the end of the proof (see \eqref{eq:uncertainty_t+} below). Roughly speaking, $K$ is produced by the quantiles at level $\frac{\ve}{100}$ for the largest cluster in sufficiently large boxes. For this value of $K$, we let $\delta = \delta(\frac{\ve}{100},\eta,K)$ coming from Lemma~\ref{lem:approx_hole_unif}, so that the following holds: for all $t_c < t < t_c + \ol{\delta}$,
\begin{equation} \label{eq:starting_hole1}
\PP \big( \text{for all } \tilde{t} \in [\dol{t}^{(K)}, \dul{t}^{(K)}], \: \holes(\tilde{t}) \text{ is } (\delta L(t), \eta)\text{-approximable} \big) \geq 1 - \frac{\ve}{100}.
\end{equation}
Here, we adapt the earlier notations $\dol{p}^{(K)}$ and $\dul{p}^{(K)}$ (see \eqref{eq:def_pK}) in an obvious way (from a percolation parameter $p > p_c$ to a ``time'' $t > t_c$). In addition, we can deduce from \eqref{eq:bound_holes_unif} the existence of $c_1$ and $c_2$ (depending on $K$) so that
\begin{equation} \label{eq:starting_hole2}
\PP \big(\text{for all } \tilde{t} \in [\dol{t}^{(K)}, \dul{t}^{(K)}], \: \Ball_{c_1 L(\tilde{t})} \subseteq \holes(\tilde{t}) \subseteq \Ball_{c_2 L(\tilde{t})} \big) \geq 1 - \frac{\ve}{100}.
\end{equation}
Applying Proposition~\ref{prop:finite_domain} with $\frac{\ve}{100}$ and the corresponding $\eta$, and this choice of $c_1$, $c_2$, and $\delta$, produces a $k_0 \geq 1$. We then consider $r_0 = r_0(N) := m_{k_0 + 5}(N)$, and the corresponding time $t_0 = t_0(N) > t_c$, satisfying $L(t_0) = r_0$ (in the remainder of the proof, we often drop the dependence on $N$ for notational convenience). We let
\begin{equation} \label{eq:def_t1t2}
t_0 = t_c + \ve_0, \: \: t_1 := \widehat{t_0} = t_c + \ve_1, \: \: \text{and } t_2 := \widehat{\widehat{t_0}} = t_c + \ve_2.
\end{equation}
We then introduce
\begin{equation} \label{eq:def_ult0}
\ul{t}_0 = t_c + \ul{\ve}_0 := t_c + (\ve_0)^{\frac{1}{2}} (\ve_1)^{\frac{1}{2}},
\end{equation}
and 
\begin{equation} \label{eq:def_olult1}
\ol{t}_1 = t_c + \ol{\ve}_1 := t_c + \ol{\kappa}_1 \, \ve_1 \quad \text{and} \quad \ul{t}_1 = t_c + \ul{\ve}_1 := t_c + \ul{\kappa}_1 \, \ve_1,
\end{equation}
for some $\ol{\kappa}_1 \in (0,1)$ and $\ul{\kappa}_1 > 1$ which depend only on $\ve$, chosen so as to ensure that for the largest cluster in $\Ball_{r_0}$, the following holds:
\begin{equation}
\PP \big( \text{at $\ol{t}_1$, } |\lclus_{\Ball_{r_0}}| < N \big) \geq 1 - \frac{\ve}{100} \quad \text{and} \quad \PP \big( \text{at $\ul{t}_1$, } |\lclus_{\Ball_{r_0}}| \geq N \big) \geq 1 - \frac{\ve}{100}.
\end{equation}
This can be done thanks to Lemma~\ref{lem:apriori_L_theta}, since
\begin{equation} \label{eq:t0_t1}
c_{\TT} L(t_0)^2 \theta(t_1) = N
\end{equation}
(from $t_1 = \widehat{t_0}$ and \eqref{eq:t_that}) and $r_0 = L(t_0)$. Note that
$$t_0 < \ul{t}_0 < \ol{t}_1 < t_1 < \ul{t}_1 < t_2$$
for all $N$ sufficiently large.

We consider the event
$$E_0 := \big\{ \text{at $t_0$, } \circuitevent^* \big( \Ann_{\mu L(t_0), L(t_0)} \big) \text{ occurs} \big\},$$
where we choose $\mu = \mu(\ve) > 0$ sufficiently small so that
\begin{equation}
\PP(E_0) \geq 1 - \frac{\ve}{100}.
\end{equation}
Indeed, this is possible thanks to \eqref{eq:RSW}.

In addition, we want to create a $t_c$-occupied circuit in $\Ann_{2 \mu \mu' L(t_0), \frac{\mu}{2} L(t_0)}$, for some $\mu' > 0$ sufficiently small, which will burn and create a barrier up to time $t_c + \deltaKMS$. For this purpose, Theorem~\ref{thm:sdp_circ} turns out to be enough, and the subsequent results are not needed at this stage. Clearly, $\mu'$ can be chosen small enough (and $< \frac{1}{8}$) so that
$$E'_0 := \big\{ \text{at $t_c$, } \circuitevent \big( \Ann_{2 \mu \mu' L(t_0), \frac{\mu}{2} L(t_0)} \big) \text{ occurs} \big\}$$
has a probability at least $1 - \frac{\ve}{100}$ (again, from \eqref{eq:RSW}). We thus introduce the outermost $t_c$-occupied circuit $\circuit$ in $\Ann_{2 \mu \mu' L(t_0), \frac{\mu}{2} L(t_0)}$ (we let $\circuit = \dout \Ball_{2 \mu \mu' L(t_0)}$ if no such circuit exists).

We now consider temporarily a modified forest fire process, where clusters entirely contained in $\Ball_{r_0}$ do not burn (even if they reach a volume larger than $N$). From estimates on $|\lclus_{\Ball_{r_0}}|$, we can see that this new process coincides with the original process up to time $\ol{t}_1$ (with probability at least $1 - \frac{\ve}{100}$). It is introduced (as well as a similar modified process considered later) to ensure that the hole created at the end of the proof is a stopping set.

Note that all vertices of $\circuit$ have to burn together (if they do burn) at a time $> t_0$.

\smallskip

\underline{Two cases:} We distinguish the following cases, depending on whether $\circuit$ has already burnt at the intermediate time $\ul{t}_0$ in the modified process.

\begin{itemize}
\item Case~1: $\circuit$ burns at or before time $\ul{t}_0$ (so it must be connected to $(\Ball_{r_0})^c$). In this case, we denote by $t'$ its burning time, and we let $t_0^* = \ul{t}_0$. We consider the event
$$\ul{E}_0 := \big\{ \text{at $\ul{t}_0$, } \circuitevent^* \big( \Ann_{\mu L(\ul{t}_0), L(\ul{t}_0)} \big) \text{ occurs} \big\} ,$$
which satisfies
$$\PP(\ul{E}_0) \geq 1 - \frac{\ve}{100}$$
(from the way in which $\mu$ was chosen). Note that if $E_0$, $E'_0$ and $\ul{E}_0$ occur, then at time $t'$, $0$ lies in an ``island'' whose boundary is contained in $\Ann_{\mu L(\ul{t}_0), \frac{\mu}{2} L(t_0)}$. We denote this island by $\holes'$, i.e. the (strong) hole containing $0$ in the complement of the cluster (just before time $t'$) of $\circuit$.

In addition, Theorem~\ref{thm:sdp_circ} (applied with $\mu \mu' L(t_0) < 2 \mu \mu' L(t_0) < \frac{\mu}{2} L(t_0) < \mu L(t_0)$) ensures that
$$\big\{ \text{at all $t \in [t', t_c + \deltaKMS]$, } \arm_1( \Ann_{\mu \mu' L(t_0), \mu L(t_0)} ) \text{ does not occur} \big\}$$
has a probability at least $1 - \frac{\ve}{100}$ (for all $N$ large enough). This means that $0$ is surrounded by a circuit lying in $\Ann_{\mu \mu' L(t_0), \mu L(t_0)}$ which remains vacant (in the forest fire process) throughout the whole interval $[t', t_c + \deltaKMS]$ (so at time $t_0^*$, in particular). We let
\begin{equation}
\ul{r}_0^* := \mu L(\ul{t}_0) \quad \text{and} \quad \ol{r}_0^* := \mu L(t_0).
\end{equation}

\item Case~2: $\circuit$ has not burnt yet at time $\ul{t}_0$. We consider the event
$$\ul{E}_1 := \big\{ \text{at $\ul{t}_1$, } \circuitevent^* \big( \Ann_{\mu L(\ul{t}_1), L(\ul{t}_1)} \big) \text{ occurs} \big\},$$
and we have
$$\PP(\ul{E}_1) \geq 1 - \frac{\ve}{100}.$$
We will show that $\circuit$ burns before time $\ul{t}_1$, so $\ul{E}_1$ will ensure that when this burning occurs, the island around $0$ contains $\Ball_{\mu L(\ul{t}_1)}$. We also introduce
$$E'_1 := \big\{ \text{at $t_c$, } \circuitevent \big( \Ann_{2 C L(\ul{t}_0), \frac{C^2}{2} L(\ul{t}_0)} \big) \text{ occurs} \big\} \cap \big\{ \text{at $\ul{t}_0$, } \dout \Ball_{2 C L(\ul{t}_0)} \lra \circuit \big\},$$
for some $C = C(\ve) \geq 1$ sufficiently large so that
$$\PP(E'_1) \geq 1 - \frac{\ve}{100}.$$
It is indeed possible to choose such a $C$, thanks to \eqref{eq:RSW} and \eqref{eq:exp_decay}. Additionally, we consider a $\ul{t}_0$-occupied net with mesh $\kappa_0 = (L(t_0) L(\ul{t}_0))^{\frac{1}{2}}$ ($\gg L(\ul{t}_0)$ as $N \to \infty$) covering $\Ball_{r_0}$. Observe that
$$\frac{\kappa_0}{L(\ul{t}_0)} = \frac{L(t_0)}{\kappa_0} = \bigg( \frac{L(t_0)}{L(\ul{t}_0)} \bigg)^{\frac{1}{2}} \gg N^{\upsilon_0}$$
as $N \to \infty$, for some $\upsilon_0 > 0$ small enough (depending on $k_0$). Hence, Lemma~\ref{lem:exist_net} ensures that at time $\ul{t}_0$, $\net(L(t_0), \kappa_0)$ occurs with probability at least $1 - \frac{\ve}{100}$ (and $r_0 = L(t_0)$). Moreover, all cells of such a net have a diameter at most $4 \kappa_0$, and it follows from Lemma~\ref{lem:largest_cluster_exp} that at time $\ul{t}_1$, none of them contains a cluster with a volume at least $N$ (also with probability at least $1 - \frac{\ve}{100}$).

We again consider a modified forest fire process, now where clusters entirely contained in $\Ball_{C^2 L(\ul{t}_0)}$ do not burn. Clearly, \eqref{eq:largest_cluster} implies that this new process coincides with the original process up to time $\ul{t}_1$ (with high probability), and that necessarily, $\circuit$ burns in $(\ul{t}_0, \ul{t}_1]$. When this happens, the $t_c$-occupied circuit in $\Ann_{2 C L(\ul{t}_0), \frac{C^2}{2} L(\ul{t}_0)}$ provided by $E'_1$ (which is occupied and connected to $\circuit$ at time $\ul{t}_0$) burns together with it. We denote by $t'$ the corresponding burning time, at which $0$ lies in an island with a boundary contained in $\Ann_{\mu L(\ul{t}_1), \frac{C^2}{2} L(\ul{t}_0)}$. In this case, we let $t_0^* = \ul{t}_1$, and we again denote by $\holes'$ the associated hole of $0$ (in the complement of the cluster of $\circuit$ just before time $t'$).

Furthermore, Theorem~\ref{thm:sdp_circ} (similarly to Case~1) ensures that for all $N$ sufficiently large,
$$\big\{ \text{at all $t \in [t', t_c + \deltaKMS]$, } \arm_1( \Ann_{C L(\ul{t}_0), C^2 L(\ul{t}_0)} ) \text{ does not occur} \big\}$$
has a probability at least $1 - \frac{\ve}{100}$. In other words, there exists a circuit around $0$ which lies in $\Ann_{C L(\ul{t}_0), C^2 L(\ul{t}_0)}$ and remains vacant during the whole interval $[t', t_c + \deltaKMS]$ (which contains $t_0^*$). We denote
\begin{equation}
\ul{r}_0^* := \mu L(\ul{t}_1) \quad \text{and} \quad \ol{r}_0^* := C^2 L(\ul{t}_0).
\end{equation}

\end{itemize}

\underline{Conclusion for both cases:} We observe that with high probability, the following properties hold true, with of course different definitions for $t_0^*$, $\ul{r}_0^*$ and $\ol{r}_0^*$ (and, strictly speaking, also for $t'$ and $\holes'$) in the two cases.
\begin{itemize}
\item The cluster of $\circuit$ burns at time $t'$, where $t' \in (t_0, \ul{t}_1]$, leaving $0$ in the island $\holes'$.

\item On the one hand, $\Ball_{\mu L(t_0^*)} = \Ball_{\ul{r}_0^*} \subseteq \holes'$. This ensures that we have enough free space around $0$ to continue the construction after time $t'$.

\item On the other hand, there exists a circuit $\circuit^*$ in $\Ball_{\ol{r}_0^*}$ which contains $\holes'$ in its interior (so in particular, it surrounds $0$), and remains vacant (in the forest fire process) throughout the interval $[t', t_c + \deltaKMS]$ ($\ni t_0^*$).
\end{itemize}

Moreover, we claim that $\ol{r}_0^*$ is ``small'' in the following sense: for some $\upsilon_1 > 0$ (depending on $k_0$),
\begin{equation} \label{eq:small_vol_Lambda*1}
( \ol{r}_0^* )^2 \theta ( t_0^* ) N^{\upsilon_1} \ll N \quad \text{as $N \to \infty$,}
\end{equation}
which can be seen from our choice of times, using that $L(t_0) = r_0 = m_{k_0 + 5}$. Indeed, in Case~1, we have
$$( \ol{r}_0^* )^2 \theta ( t_0^* ) \asymp L(t_0)^2 \theta(\ul{t}_0),$$
while on the other hand,
$$c_{\TT} L(t_0)^2 \theta(t_1) = N$$
(from \eqref{eq:t0_t1}). It then suffices to compare $\theta(\ul{t}_0)$ and $\theta(t_1)$, by using that $\ul{t}_0$ is ``much earlier'' than $t_1 = \widehat{t_0}$. More precisely, it follows from Lemma~\ref{lem:apriori_L_theta}, combined with \eqref{eq:def_t1t2} and \eqref{eq:def_ult0}, that
$$\frac{\theta(\ul{t}_0)}{\theta(t_1)} \leq C' \bigg( \frac{\ve_0}{\ve_1} \bigg)^{\frac{\beta}{2(2-\beta)}},$$
and
$$\frac{\ve_0}{\ve_1} \ll N^{-\upsilon_2}$$
for some $\upsilon_2 > 0$ small enough (e.g. using again Lemma~\ref{lem:apriori_L_theta}, and $\delta_{k_0 + 4} < \delta_{k_0 + 5}$, in the notation of the paragraph below \eqref{eq:def_mk}). This yields \eqref{eq:small_vol_Lambda*1} in this case. In Case~2,
$$( \ol{r}_0^* )^2 \theta ( t_0^* ) \asymp L(\ul{t}_0)^2 \theta(\ul{t}_1),$$
and we can proceed in a similar way as in Case~1, using now that $\ul{t}_1$ satisfies $L(\ul{t}_1) \asymp L(t_1)$ (from \eqref{eq:def_olult1}), so it is much earlier than $\widehat{\ul{t}_0}$ (since $t_0$ is much earlier than $\ul{t}_0$, and $t_1 = \widehat{t_0}$). Hence, the claim \eqref{eq:small_vol_Lambda*1} holds true in this case as well.

Next, we let $t_1^* = \widehat{t_0^*} = t_c + \ve_1^*$, and we consider $\ul{t}_1^* = t_c + \ul{\ve}_1^* := t_c + \ul{\kappa}_1^* \, \ve_1^*$, for some $\ul{\kappa}_1^*$ chosen large enough so that
\begin{equation} \label{eq:def_ult1*}
\big| \Ball_{\frac{\mu}{2} L(t_0^*)} \big| \theta(\ul{t}_1^*) \geq 2N.
\end{equation}
Lemma~\ref{lem:apriori_L_theta} indeed allows us to do so, since $c_{\TT} L(t_0^*)^2 \theta(t_1^*) = N$ (from $t_1^* = \widehat{t_0^*}$ and \eqref{eq:t_that}).

We then introduce $t^{**}$ ($> t_0^*$) via the relation
\begin{equation} \label{eq:def_t**}
\theta ( t^{**} ) = \theta ( t_0^* ) N^{\upsilon_1}.
\end{equation}
It follows immediately from \eqref{eq:small_vol_Lambda*1} and \eqref{eq:def_t**} that $| \Ball_{\ol{r}_0^*} | \theta ( t^{**} ) \ll N$ as $N \to \infty$. Hence, \eqref{eq:largest_cluster} implies that with probability at least $1 - \frac{\ve}{100}$, no cluster burns inside the aforementioned vacant circuit $\circuit^*$ before time $t^{**}$ (for all $N$ large enough). We also let $\Gamma = \Gamma(N)$ be such that
\begin{equation} \label{eq:def_Gamma}
\mu L(t_0^*) = \Gamma L(t^{**}),
\end{equation}
and \eqref{eq:def_t**} (combined with \eqref{eq:apriori_L_theta}) implies that for some $C_1 > 0$,
\begin{equation} \label{eq:GammaN}
\Gamma(N) \geq C_1 N^{2 \upsilon_1}.
\end{equation}
Hence, the event
\begin{equation}
E_2 := \Big\{ \text{at $t^{**}$, } \circuitevent \Big( \Ann_{\frac{\Gamma}{2} L(t^{**}), \Gamma L(t^{**})} \Big) \cap \circuitevent \Big( \Ann_{\frac{\sqrt{\Gamma}}{2} L(t^{**}), \sqrt{\Gamma} L(t^{**})} \Big) \cap \arm_1 \Big( \Ann_{\frac{\sqrt{\Gamma}}{4} L(t^{**}), \Gamma L(t^{**})} \Big) \text{ occurs} \Big\}
\end{equation}
satisfies, for all $N$ large enough,
\begin{equation}
\PP(E_2) \geq 1 - \frac{\ve}{100}.
\end{equation}
We also consider, using Lemma~\ref{lem:exist_net}, a $t^{**}$-occupied net covering $\Ball_{\mu L(t_0^*)}$, with mesh $\kappa^{**} = \sqrt{\Gamma} L(t^{**})$. For this purpose, note that
$$\frac{\kappa^{**}}{L(t^{**})} = \frac{\mu L(t_0^*)}{\kappa^{**}} = \sqrt{\Gamma} \geq \sqrt{C_1} N^{\upsilon_1}$$
(using \eqref{eq:def_Gamma} and \eqref{eq:GammaN}). In combination with the exponential upper tail provided by Lemma~\ref{lem:largest_cluster_exp}, it will ensure that with probability at least $1 - \frac{\ve}{100}$, no cluster in $\Ball_{\frac{\mu}{2} L(t_0^*)}$ burns outside of the big ``structure'' produced by the occupied paths in $E_2$. Indeed, in any of the cells of the net, all clusters have a volume $\ll N$ up to time $\ul{t}_1^*$.

Let $\circuit^{**}$ be the outermost occupied circuit in $\Ann_{\frac{\sqrt{\Gamma}}{2} L(t^{**}), \sqrt{\Gamma} L(t^{**})}$ at time $t^{**}$, and denote
\begin{equation}
\text{for all } t \geq t^{**}, \quad Y_t := \big| \big\{ v \in V \setminus \ol{\circuit^{**}} \: : \: v \lra \circuit^{**} \big\}\big|
\end{equation}
in the modified $N$-forest fire process where the cluster of $\circuit^{**}$ does not burn. This quantity is nondecreasing in $t$, and it coincides with the analogous quantity in the original process, until $\circuit^{**}$ burns in that process (together with all occupied vertices connected to it). Moreover, we have $Y_{\ul{t}_1^*} \geq N$ with probability at least $1 - \frac{\ve}{100}$ (from \eqref{eq:def_ult1*} and \eqref{eq:largest_cluster}), which we can assume. This will ensure that the random times $\ol{t}$ and $\ul{t}$ defined below are smaller than $\ul{t}_1^*$, so that the cells in the net cannot reach a volume at least $N$. In the following, we condition on $\circuit^{**}$ and the birth process outside $\circuit^{**}$, so that $Y$ can be considered as deterministic.

We also consider $\ul{x}$, $\ol{x}$, $X > 0$ associated with $\frac{\ve}{100}$ by Lemma~\ref{lem:largest_cluster_quantiles}, so that for all $p > p_c$, and $n \geq X L(p)$,
\begin{equation} \label{eq:lclus_quant}
\PP_p \bigg( \bigg\{ \frac{\big| \lclus_{\Ball_n} \big|}{n^2 \theta(p)} \in (\ul{x}, \ol{x}) \bigg\} \cap \circuitarm \Big( \Ann_{\frac{1}{2} n, n} \, \big| \, \lclus_{\Ball_n} \Big) \bigg) \geq 1 - \frac{\ve}{100}.
\end{equation}
We will apply it at times $t \geq t^{**}$, for $n = \frac{\sqrt{\Gamma}}{2} L(t^{**})$ and $n = 2 \sqrt{\Gamma} L(t^{**})$. Observe that the condition on $n$ is satisfied for all $N$ large enough (because $\Gamma$ grows with $N$, see \eqref{eq:GammaN}).

We introduce the times
\begin{align*}
& \ol{t} := \inf \big\{ t \geq t^{**} \: : \: Y_t + \ol{x} \big( 2 \sqrt{\Gamma} L(t^{**}) \big)^2 \theta(t) \geq N \big\}\\[1mm]
\text{and} \quad & \ul{t} := \inf \Big\{ t \geq t^{**} \: : \: Y_t + \ul{x} \Big( \frac{\sqrt{\Gamma}}{2} L(t^{**}) \Big)^2 \theta(t) \geq N \Big\}.
\end{align*}
Clearly, $\ol{t} < \ul{t}$, and they are deterministic due to the earlier conditioning.

Moreover, it follows from \eqref{eq:lclus_quant} that the burning time $t^+$ of $\circuit^{**}$ satisfies
\begin{equation} \label{eq:t+1}
\PP (\ol{t} \leq t^+ \leq \ul{t}) \geq 1 - 2 \cdot \frac{\ve}{100}.
\end{equation}
Indeed, we have in particular
\begin{align*}
\PP \Big( \text{at $\ul{t}$, } & \Big| \lclus_{\Ball_{\frac{\sqrt{\Gamma}}{2} L(t^{**})}} \Big| \geq \ul{x} \Big( \frac{\sqrt{\Gamma}}{2} L(t^{**}) \Big)^2 \theta(\ul{t})\\
& \text{ and } \circuitarm \Big( \Ann_{\frac{\sqrt{\Gamma}}{4} L(t^{**}), \frac{\sqrt{\Gamma}}{2} L(t^{**})} \, \big| \, \lclus_{\Ball_{\frac{\sqrt{\Gamma}}{2} L(t^{**})}} \Big) \text{ occurs} \Big) \geq 1 - \frac{\ve}{100}
\end{align*}
and if this event occurs, the number of occupied vertices connected to $\circuit^{**}$ at time $\ul{t}$ is $\geq N$ (using in addition the occupied arm provided by $\arm_1 \big( \Ann_{\frac{\sqrt{\Gamma}}{4} L(t^{**}), \Gamma L(t^{**})} \big)$), so $t^+ \leq \ul{t}$. On the other hand,
\begin{align*}
\PP \Big( \text{at $\ol{t}$, } & \Big| \lclus_{\Ball_{2 \sqrt{\Gamma} L(t^{**})}} \Big| \leq \ol{x} \big( 2 \sqrt{\Gamma} L(t^{**}) \big)^2 \theta(\ol{t})\\
& \text{ and } \circuitarm \Big( \Ann_{\sqrt{\Gamma} L(t^{**}), 2 \sqrt{\Gamma} L(t^{**})} \, \big| \, \lclus_{\Ball_{2 \sqrt{\Gamma} L(t^{**})}} \Big) \text{ occurs} \Big) \geq 1 - \frac{\ve}{100},
\end{align*}
and if this event occurs, the number of vertices inside $\circuit^{**}$ connected to this circuit, at time $\ol{t}$, is $< \ol{x} \big( 2 \sqrt{\Gamma} L(t^{**}) \big)^2 \theta(\ol{t})$. Indeed, this number is at most the number of vertices in $\Ball_{\sqrt{\Gamma} L(t^{**})}$ connected to the boundary of this box, which is itself $< \big| \lclus_{\Ball_{2 \sqrt{\Gamma} L(t^{**})}} \big|$ (from the occurrence of $\circuitarm \big( \Ann_{\sqrt{\Gamma} L(t^{**}), 2 \sqrt{\Gamma} L(t^{**})} \, \big| \, \lclus_{\Ball_{2 \sqrt{\Gamma} L(t^{**})}} \big)$). We deduce that at time $\ol{t}$, the number of vertices connected to $\circuit^{**}$ is $< N$, so $t^+ \geq \ol{t}$. This completes the proof of \eqref{eq:t+1}, and we assume from now on that
\begin{equation} \label{eq:t+2}
\ol{t} \leq t^+ \leq \ul{t}.
\end{equation}

Using the fact that $Y_t$ is nondecreasing in $t$, we can obtain immediately that for some constant $\xi_1 = \xi_1(\ve)$, we have $\theta(\ul{t}) \leq \xi_1 \theta(\ol{t})$ (this reasoning is similar to the beginning of Step~6 in the proof of Proposition~7.2 in \cite{BKN2018}). Hence (using \eqref{eq:apriori_L_theta}), for some $\xi_2 = \xi_2(\ve)$,
\begin{equation} \label{eq:uncertainty_t+}
L(\ol{t}) \leq \xi_2 L(\ul{t}).
\end{equation}
In words, this constant $\xi_2$ (associated with $\frac{\ve}{100}$) quantifies the uncertainty on $L(t^+)$. We thus choose $K = \sqrt{\xi_2}$ in the beginning of the proof, so that \eqref{eq:starting_hole1} and \eqref{eq:starting_hole2} can be applied with $t$ defined by $L(t) = (L(\ol{t}) L(\ul{t}))^{\frac{1}{2}}$. Indeed, \eqref{eq:t+2} and \eqref{eq:uncertainty_t+} then ensure that
\begin{equation} \label{eq:bounds_t+}
t^+ \in [\dol{t}^{(K)}, \dul{t}^{(K)}].
\end{equation}

In particular, we conclude that the hole created around $0$ by the burning at time $t^+$ is $(\delta L(t), \eta)$-approximable. Also, note that by the construction it is a stopping set. Moreover, we can, by \eqref{eq:bounds_t+}, use Theorem~\ref{thm:M_thick}. So, the creation of this hole brings us in the same position as in the second step in the proof of Proposition~\ref{prop:finite_domain}. We can now repeat the same iteration argument as in that proof to complete the proof of Theorem~\ref{thm:main_result}.
\end{proof}

Theorem~\ref{thm:main_result_finite} can actually be obtained from the same reasonings, as we now briefly explain.

\begin{proof}[Proof of Theorem~\ref{thm:main_result_finite}]
It suffices to observe the following. Since the exponents associated with the scales $(m_k)$ satisfy $\delta_k \nearrow \delta_{\infty} = \frac{48}{91}$ as $k \to \infty$ (see the paragraph below \eqref{eq:def_mk}), we have: for every $k \geq 0$, there exists $\eta = \eta(k) > 0$ such that $N^{\frac{48}{91} - \eta} \gg m_{k+5}(N)$ as $N \to \infty$. Hence, for this specific value $\eta$, the condition $m(N) \geq N^{\frac{48}{91} - \eta}$ implies that $\Ball_{m(N)} \supseteq \Ball_{r_0(N)}$ for all $N$ large enough (with $r_0$ as in the beginning of the proof of Theorem~\ref{thm:main_result}), so that exactly the same construction can be repeated.
\end{proof}

\subsection{Brief discussion on frozen percolation with modified boundary rules} \label{sec:FP_modified}

As a concluding remark, we want to mention that the proofs of Theorems~\ref{thm:main_result} and \ref{thm:main_result_finite} can be adapted to other processes with a similar flavor. In particular, our reasonings can be applied to an alternative frozen percolation process, with \emph{modified boundary rules}. For this process, each vertex $v \in V_G$ can be in three states: vacant ($\eta_v = 0$), occupied ($\eta_v = 1$), or frozen ($\eta_v = -1$). Its definition is similar to that of volume-frozen percolation in Section~\ref{sec:processes}, except that when an occupied cluster reaches a volume $N$ and freezes, all its vertices become frozen, while the vertices along its outer boundary remain unaffected. In other words, these boundary vertices stay vacant immediately after the freezing time, and they may become occupied (and then possibly freeze) at later times (while in the original frozen percolation process, these vertices were kept in a vacant state). Hence, when a vacant vertex $v$ tries to change its state, say at time $t$, we consider the union of the occupied clusters adjacent to $v$ at time $t^-$:
$$\ol{\cluster}_{t^-}(v) := \bigcup_{\substack{v' \in V_G\\ v' \sim v}} \cluster_{t^-}(v').$$
If $|\{v\} \cup \ol{\cluster}_{t^-}(v)|\geq N$, then all vertices in $\{v\} \cup \ol{\cluster}_{t^-}(v)$ become frozen at time $t$. Otherwise, $v$ simply becomes occupied at time $t$, and in this case we have obviously $\cluster_t(v) = \{v\} \cup \ol{\cluster}_{t^-}(v)$.

This process is the $N$-parameter analog of the forest fire process without recovery, considered in \cite{BN2018}, where clusters are ignited according to a Poisson process with rate $\zeta \searrow 0$. It can also clearly be seen as an intermediate process
between volume-frozen percolation and the $N$-parameter forest fire process which is the focus of the present paper. The same reasonings can be followed practically step by step in this case, but redoing them seems to be required: because of the lack of monotonicity, we cannot simply use a ``sandwiching'' of the process between frozen percolation and $N$-forest fire.

Finally, note that Theorems~\ref{thm:main_result} and \ref{thm:main_result_finite} could also be obtained in a more direct way from the deconcentration results for frozen percolation, without using the properties derived in Sections~\ref{sec:positive_rec_time} and \ref{sec:stab_recov}. Indeed, this could be achieved from geometric considerations about the weak and strong holes, although it would require a whole new proof. Roughly speaking, modifying the boundary rules makes accessible from $0$ many additional vertices, and we have to take into account these vertices, which lie in the weak hole but not in the strong hole. In principle, we should thus follow the successive weak holes around $0$, instead of the strong holes.

However, we can observe (from a similar $6$-arm argument as in the proof of Lemma~\ref{lem:approx_hole}) that the extra vertices lie in ``dangling ends'' of the weak hole, and can only be reached through narrow corridors, so that they typically do not have a significant effect on the volume of the largest cluster in the strong hole. Hence, we could obtain an upper bound on the expected number of such vertices which are connected to the strong hole, and combining this bound with Markov's inequality should be enough to control their contribution. Once again, the strong holes play a central role, and not the weak holes.

\appendix

\section{Appendix: uniform approximability of percolation holes} \label{sec:proof_largest}

We now prove Lemma~\ref{lem:approx_hole_unif}.

\begin{proof}[Proof of Lemma~\ref{lem:approx_hole_unif}]
Let $\ve$, $\eta$ and $K$ be as in the statement. We follow the proof of Lemma~3.7 in \cite{BKN2018}. As in \eqref{eq:bound_holes_unif}, there exist $0 < \ul{\lambda} < \ol{\lambda}$, which depend only on $\ve$ and $K$, such that: for all $p > p_c$,
\begin{equation} \label{eq:approx_hole_unif_pf1}
\PP \big(\text{for all } \tilde{p} \in [\dol{p}^{(K)}, \dul{p}^{(K)}], \: \Ball_{\ul{\lambda} L(p)} \subseteq \holes(\tilde{p}) \subseteq \holew(\tilde{p}) \subseteq \Ball_{\ol{\lambda} L(p)} \big) \geq 1 - \frac{\ve}{4}.
\end{equation}
Let $p > p_c$. We denote by $A$ the event appearing in \eqref{eq:approx_hole_unif_pf1}, so that
\begin{equation} \label{eq:approx_hole_A1}
\PP(A^c) \leq \frac{\ve}{4},
\end{equation}
and from now on we assume that it holds. We also consider a small $\delta > 0$, that we explain how to choose appropriately later.

Let $\tilde{p} \in [\dol{p}^{(K)}, \dul{p}^{(K)}]$. In order to prove that $\holes(\tilde{p})$ is $(\delta L(p), \eta)$-approximable, we need to derive an upper bound on $\Dout{\holes(\tilde{p})}{\delta L(p)} \setminus \Dint{\holes(\tilde{p})}{\delta L(p)}$, which is a union of $\delta L(p)$-blocks. As in the proof of Lemma~3.7 in \cite{BKN2018}, we decompose this set into two disjoint subsets of blocks:
$$\Dout{\holes(\tilde{p})}{\delta L(p)} \setminus \Dint{\holes(\tilde{p})}{\delta L(p)} = \Lambda_1(\tilde{p}) \cup \Lambda_2(\tilde{p}),$$
where $\Lambda_1(\tilde{p})$ is the union of all $\delta L(p)$-blocks intersecting $\dout \holes(\tilde{p})$, and $\Lambda_2(\tilde{p})$ is the union of all $\delta L(p)$-blocks which are entirely contained in $\holes(\tilde{p})$, but are not connected inside $\holes(\tilde{p})$ to the block $S^{(\delta L(p))}_{0,0}$ centered on $0$. In addition, we let, for $i = 1, 2$,
\begin{equation}
\ol{\Lambda}_i := \bigcup_{\tilde{p} \in [\dol{p}^{(K)}, \dul{p}^{(K)}]} \Lambda_i(\tilde{p}).
\end{equation}

In order to handle $\Lambda_1(\tilde{p})$, we note that from the boundary of each block that it contains, there is a $\tilde{p}$-occupied arm to distance $L(p)$. This arm is also $\dul{p}^{(K)}$-occupied, so around the center $z$ of the block, the event $\arm_1(\Ann_{\frac{\delta}{2} L(p), L(p)}(z))$ occurs at $\dul{p}^{(K)}$. Note that under the event $A$, the number of choices for a block in $\ol{\Lambda}_1$ is at most $\big( \frac{2 \ol{\lambda} L(p)}{\delta L(p)} \big)^2 = \big( \frac{2 \ol{\lambda}}{\delta} \big)^2$. Hence,
\begin{align}
\EE \Big[ \big| \ol{\Lambda}_1 \big| \ind_A \Big] & \leq \bigg( \frac{2 \ol{\lambda}}{\delta} \bigg)^2 \PP_{\dul{p}^{(K)}} \big( \arm_1(\Ann_{\frac{\delta}{2} L(p), L(p)}(z)) \big) \cdot |\Ball_{\delta L(p)}| \nonumber \\
& \leq c_1 \delta^{-2} \cdot \bigg( \frac{\frac{\delta}{2} L(p)}{L(p)} \bigg)^{\beta} \cdot (\delta L(p))^2 = c_2 \delta^{\beta} L(p)^2, \label{eq:upper_bd_Lambda1}
\end{align}
where we used the a-priori upper bound from \eqref{eq:1arm} (as well as the fact that $L(\dul{p}^{(K)}) \asymp L(p)$) for the second inequality, and the constants $c_1$ and $c_2$ depend on $K$, and also, through $\ol{\lambda}$, on $\ve$.

We now turn to $\Lambda_2(\tilde{p})$, and derive a similar upper bound for the expectation of $|\ol{\Lambda}_2|$. As in \cite{BKN2018}, by max-flow min-cut and since $\holes(\tilde{p})$ is simply connected, we have that for every $v \in \Lambda_2(\tilde{p})$, there is a (not necessarily unique) $\delta L(p)$-block $B$, with center $z_B$, such that every path in $\holes(\tilde{p})$ from $v$ to $0$ intersects $\tilde{B} = \Ball_{\delta L(p)}(z_B)$. So, on the event $A$, $v$ is separated from $\Ball_{\ul{\lambda} L(p)}$ in $\holes(\tilde{p})$ by $\tilde{B}$. We say that ``$\tilde{B}$ separates $v$ in $\holes(\tilde{p})$ from $0$''.

By Remark~\ref{rem:arms_holes}, we have, in a similar way as in \cite{BKN2018}, that the following claim holds. If $A$ occurs, $v \in \holes(\tilde{p})$, and $B$ separates $v$ from $0$ in $\holes(\tilde{p})$, then the two events $\arm^{\dol{p}^{(K)}, \dul{p}^{(K)}}_6(\Ann_{\delta L(p), \ell \wedge \ul{\lambda} L(p)}(z_B))$ and $\arm^{\dol{p}^{(K)}, \dul{p}^{(K)}}_3(\Ann_{\ell \wedge \ul{\lambda} L(p), \ell \vee \ul{\lambda} L(p)}(z_B))$ hold, where $\ell = d(v, z_B)$ (and as above, $z_B$ is the center of $B$). In this proof only, the $6$- and $3$-arm events that we consider correspond to the sequences $\sigma = (ovvovv)$ and $\sigma = (ovv)$, respectively, where the arms are required to be $\dul{p}^{(K)}$-occupied (type $o$) or $\dol{p}^{(K)}$-vacant (type $v$). We will use the fact that their probabilities are of the same order (uniformly in $p > p_c$, once $K$ is fixed) as the corresponding arm events (in the same annuli, of course) at $p_c$.

As in \cite{BKN2018}, we first rule out the case when $z$ and $B$ are far apart. We let $\tilde{A} := \{$there exists a $\delta L(p)$-block $B$ and a vertex $v$ with $d(v, z_B) \geq \ul{\lambda} L(p)$ such that for some $\tilde{p} \in [\dol{p}^{(K)}, \dul{p}^{(K)}]$, $B$ separates $v$ in $\holes(\tilde{p})$ from $0\}^c$. By the above claim, we have
$$\PP(A \cap \tilde{A}^c) \leq \bigg( \frac{2 \ol{\lambda} L(p)}{\delta L(p)} \bigg)^2 \PP \big( \arm^{\dol{p}^{(K)}, \dul{p}^{(K)}}_6(\Ann_{\delta L(p), \ul{\lambda} L(p)}(z_B)) \big),$$
where the first factor is an upper bound on the number of choices for $B$ under the event $A$. Hence,
$$\PP(A \cap \tilde{A}^c) \leq c_1 \delta^{-2} \cdot \bigg( \frac{\delta L(p)}{\ul{\lambda} L(p)} \bigg)^{2 + \beta} = c_2 \delta^{\beta},$$
using the a-priori upper bound from \eqref{eq:6arm} (and again the fact that $L(\dul{p}^{(K)}) \asymp L(p)$). As before, the constants which appear only depend on $K$ and $\ve$. This probability can thus be made $\leq \frac{\ve}{4}$ by taking $\delta$ sufficiently small, and we assume it to be the case:
\begin{equation} \label{eq:approx_hole_A2}
\PP(A \cap \tilde{A}^c) \leq \frac{\ve}{4}.
\end{equation}

Now, we have
\begin{align*}
\EE \Big[ \big| \ol{\Lambda}_2 \big| \ind_{A \cap \tilde{A}} \Big] & \leq \sum_B \sum_{v : d(v,z_B) \leq \ul{\lambda} L(p)} \PP \big(B \text{ separates } v \text{ from } 0 \text{ in } \holes(\tilde{p}) \text{ for some } \tilde{p} \big),
\end{align*}
where the first sum is over all $\delta L(p)$-blocks $B$ contained in $\Ball_{\ol{\lambda} L(p)}$. From the same upper bound as before on the number of choices for $B$, and a summation, we get
\begin{align}
\EE \Big[ \big| \ol{\Lambda}_2 \big| \ind_{A \cap \tilde{A}} \Big] & \leq \bigg( \frac{2 \ol{\lambda} L(p)}{\delta L(p)} \bigg)^2 \sum_{r=0}^{\ul{\lambda} / \delta} \sum_{v : \frac{d(v,z_B)}{\delta L(p)} \in [r,r+1)} \PP \big(B \text{ separates } v \text{ from } 0 \text{ in } \holes(\tilde{p}) \text{ for some } \tilde{p} \big) \nonumber \\
& \leq c_3 \bigg( \frac{2 \ol{\lambda}}{\delta} \bigg)^2 \bigg( (\delta L(p))^2 \bigg( \frac{\delta L(p)}{\ul{\lambda} L(p)} \bigg)^{\beta/2} + \sum_{r=1}^{\ul{\lambda} / \delta} r (\delta L(p))^2 \bigg( \frac{\delta L(p)}{r \delta L(p)} \bigg)^{2+\beta} \bigg( \frac{r \delta L(p)}{\ul{\lambda} L(p)} \bigg)^{\beta/2} \bigg) \nonumber \\
& \leq c_3 \delta^{\beta/2} L(p)^2 \bigg( 1 + \sum_{r=1}^{\infty} r^{-1-\beta+\beta/2} \bigg) \leq c_4 \delta^{\beta/2} L(p)^2, \label{eq:upper_bd_Lambda2}
\end{align}
where the second inequality uses the above claim, which implies, for a vertex $v$ with $\frac{d(v,z_B)}{\delta L(p)} \in [r,r+1)$ (and $r \leq \ul{\lambda} / \delta$), that
\begin{align*}
\PP \big(B \text{ separates } & v \text{ from } 0 \text{ in } \holes(\tilde{p}) \text{ for some } \tilde{p} \big)\\
& \leq \PP \big( \arm^{\dol{p}^{(K)}, \dul{p}^{(K)}}_6(\Ann_{\delta L(p), r \delta L(p)}(z_B)) \big) \PP \big( \arm^{\dol{p}^{(K)}, \dul{p}^{(K)}}_3(\Ann_{(r+1) \delta L(p) \wedge \ul{\lambda} L(p), \ul{\lambda} L(p)}(z_B)) \big),
\end{align*}
as well as \eqref{eq:1arm} (which also holds with $\beta$ replaced by $\frac{\beta}{2}$) and \eqref{eq:6arm}. Finally, we can write
\begin{align*}
\PP \big( \big| & \Dout{\holes(\tilde{p})}{\delta L(p)} \setminus \Dint{\holes(\tilde{p})}{\delta L(p)} \big| \geq \eta \big| \holes(\tilde{p}) \big| \text{ for some } \tilde{p} \big)\\
& \leq \PP \big( \big| \Dout{\holes(\tilde{p})}{\delta L(p)} \setminus \Dint{\holes(\tilde{p})}{\delta L(p)} \big| \geq \eta \big| \holes(\tilde{p}) \big| \text{ for some } \tilde{p}, \text{ and } A \cap \tilde{A} \text{ occurs} \big) + \PP((A \cap \tilde{A})^c).
\end{align*}
Since $\PP((A \cap \tilde{A})^c) = \PP(A^c) + \PP(A \cap \tilde{A}^c) \leq \frac{\ve}{2}$ (from \eqref{eq:approx_hole_A1} and \eqref{eq:approx_hole_A2}), we get that the desired probability is at most
\begin{align*}
\PP \big( \big| & \Lambda_1(\tilde{p}) \big| + \big| \Lambda_2(\tilde{p}) \big| \geq \eta \big| \holes(\tilde{p}) \big| \text{ for some } \tilde{p}, \text{ and } A \cap \tilde{A} \text{ occurs} \big) + \frac{\ve}{2}\\
& \leq \PP \big( \big| \ol{\Lambda}_1 \big| + \big| \ol{\Lambda}_2 \big| \geq \eta \big| \Ball_{\ul{\lambda} L(p)} \big|, \text{ and } A \cap \tilde{A} \text{ occurs} \big) + \frac{\ve}{2}.
\end{align*}
Using Markov's inequality, combined with \eqref{eq:upper_bd_Lambda1} and \eqref{eq:upper_bd_Lambda2}, we obtain
that this is at most
$$\frac{1}{\eta \big| \Ball_{\ul{\lambda} L(p)} \big|} \Big( \EE \Big[ \big| \ol{\Lambda}_1 \big| \ind_{A \cap \tilde{A}} \Big] + \EE \Big[ \big| \ol{\Lambda}_2 \big| \ind_{A \cap \tilde{A}} \Big] \Big) \leq c \frac{\delta^{\beta/2}}{\eta} + \frac{\ve}{2},$$
which can be made $\leq \ve$ by taking $\delta$ sufficiently small (depending on $\eta$, $\ve$ and $K$). This completes the proof of Lemma~\ref{lem:approx_hole_unif}.
\end{proof}

\bibliographystyle{plain}
\bibliography{FF_deconcentration}

\end{document}